\newcommand*{\addFileDependency}[1]{
\typeout{(#1)}
\@addtofilelist{#1}
\IfFileExists{#1}{}{\typeout{No file #1.}}
}\makeatother
\newcommand{\Comments}{1}
\newcommand{\mynote}[2]{\ifnum\Comments=1\textcolor{#1}{#2}\fi}
\newcommand{\mytodo}[2]{\ifnum\Comments=1%
\todo[linecolor=#1!80!black,backgroundcolor=#1,bordercolor=#1!80!black]{#2}\fi}
\numberwithin{equation}{section}
\newtheorem{theorem}{Theorem}
\newtheorem{lemma}{Lemma}[section]
\theoremstyle{remark}
\newtheorem{remark}{Remark}
\newtheorem{ass}{Assumption}
\providecommand{\U}[1]{\protect\rule{.1in}{.1in}}
\DeclareMathOperator{\E}{\mathbb{E}}
\DeclareMathOperator{\argmin}{argmin}
\newcommand{\dd}{\mathrm{d}}
\newcommand{\cO}{\mathcal{O}}
\newcommand{\cJ}{\mathcal{J}}
\newcommand{\cG}{\mathcal{G}}
\newcommand{\rd}{\, \mathrm{d}}
\newcommand{\cK}{\, \mathcal{K}}
\newcommand{\cS}{\, \mathcal{S}}
\renewcommand{\cJ}{\, \mathcal{J}}
\newcommand{\R}{\mathbb{R}}
\newcommand{\rP}{\mathrm{P}}
\newcommand{\CH}{C_\mathrm{H}}
\newcommand{\Cf}{C_f}
\newcommand{\CV}{C_{\mathrm{V}}}
\newcommand{\ft}{\mathrm{ft}}
\newcommand{\N}{\mathbb{N}}
\renewcommand{\Pr}{\mathbb{P}}
\begin{document}

% \title{Multivariate root-n-consistent smoothing parameter free matching estimators 
	% and estimators of inverse density weighted expectations }
% \author{Hajo Holzmann\footnote{Corresponding author. Prof.~Dr.~Hajo Holzmann, Fachbereich Mathematik und Informatik, Philipps-Universit\"at Marburg, Hans-Meerweinstr.~6, 35043 Marburg, Germany}\\
	% \small{Fachbereich Mathematik und Informatik}  \\
	% \small{Philipps-Universit\"at Marburg} \\
	% \small{holzmann@mathematik.uni-marburg.de}
	% \and
	% Alexander Meister \\
	% \small{Institut f\"ur Mathematik}  \\
	% \small{Universit\"at Rostock} \\
	% \small{alexander.meister@uni-rostock.de}}
% %

%\date{\today }

%\maketitle

\begin{frontmatter}
	%%%%%%%%%%%%%%%%%%%%%%%%%%%%%%%%%%%%%%%%%%%%%%
	%% Enter the title of your article here     %%
	%%%%%%%%%%%%%%%%%%%%%%%%%%%%%%%%%%%%%%%%%%%%%%
	\title{Multivariate root-n-consistent smoothing parameter free matching estimators and estimators of inverse density weighted expectations }
	%\title{A sample article title with some additional note\thanksref{T1}}
	\runtitle{Multivariate root-n-consistent matching estimators}
	%\thankstext{T1}{A sample of additional note to the title.}
	
	\begin{aug}
		%%%%%%%%%%%%%%%%%%%%%%%%%%%%%%%%%%%%%%%%%%%%%%%
		%% Only one address is permitted per author. %%
		%% Only division, organization and e-mail is %%
		%% included in the address.                  %%
		%% Additional information can be included in %%
		%% the Acknowledgments section if necessary. %%
		%% ORCID can be inserted by command:         %%
		%% \orcid{0000-0000-0000-0000}               %%
		%%%%%%%%%%%%%%%%%%%%%%%%%%%%%%%%%%%%%%%%%%%%%%%
		\author[A]{\fnms{Hajo}~\snm{Holzmann}\ead[label=e1]{holzmann@mathematik.uni-marburg.de}}
		\and
		\author[B]{\fnms{Alexander}~\snm{Meister}\ead[label=e2]{alexander.meister@uni-rostock.de}}
		%%%%%%%%%%%%%%%%%%%%%%%%%%%%%%%%%%%%%%%%%%%%%%
		%% Addresses                                %%
		%%%%%%%%%%%%%%%%%%%%%%%%%%%%%%%%%%%%%%%%%%%%%%
		\address[A]{Philipps-Universit{\"a}t Marburg\printead[presep={,\ }]{e1}}
		
		\address[B]{Universit{\"a}t Rostock\printead[presep={,\ }]{e2}}
	\end{aug}
	
	\begin{abstract}
		Expected values weighted by the inverse of a multivariate density or, equivalently, Lebesgue integrals of regression functions with multivariate regressors occur in various areas of applications, including estimating average treatment effects, nonparametric estimators in random coefficient regression models or deconvolution estimators in Berkson errors-in-variables models. The frequently used nearest-neighbor and matching estimators suffer from bias problems in multiple dimensions. % and do not attain the parametric rate of convergence in dimensions $d \geq 3$, while having a non-negligible bias in dimension $d=2$. 
		By using polynomial least squares fits on each cell of the $K^{\text{th}}$-order Voronoi tessellation for sufficiently large $K$, we develop novel modifications of nearest-neighbor and matching estimators which again converge at the parametric $\sqrt n $-rate under mild smoothness assumptions on the unknown regression function and without any smoothness conditions on the unknown density of the covariates. We stress that in contrast to competing methods for correcting for the bias of matching estimators, our estimators do not involve nonparametric function estimators and in particular do not rely on sample-size dependent smoothing parameters.  We complement the upper bounds with appropriate lower bounds derived from information-theoretic arguments, which show that some smoothness of the regression function is indeed required to achieve the parametric rate.  Simulations illustrate the practical feasibility of the proposed methods. 
	\end{abstract}
	
	\begin{keyword}[class=MSC]
		\kwd[Primary ]{62H12}
		\kwd[; secondary ]{62G05}
	\end{keyword}
	
	\begin{keyword}
		\kwd{Average treatment effects}
		\kwd{Berkson errors in variables models}
		\kwd{bias correction}
		\kwd{inverse density weighted expectations}
		\kwd{matching estimators}
		\kwd{random coefficients}
		\kwd{Voronoi tessellation}
	\end{keyword}
	
\end{frontmatter}

\section{Introduction}

We consider estimation of inverse density weighted expectations, or, equivalently, of Lebesgue integrals of regression functions, with multivariate regressors.  As we shall discuss in detail, such functionals occur in various  areas of applications, including estimating average treatment effects \citep{abadie2006large, abadie2011bias}, random coefficient regression models \citep{ hoderlein2017triangular, gaillac2022adaptive}, Berkson errors in variables models \citep{zbMATH05071790} or transfer learning under covariate shift \citep{kouw2018introduction, portier2023scalable}.  Frequently used nearest-neighbor as well as matching estimators involve a bias or order $n^{-1/d}$ in $d$-dimensions \citep{abadie2006large}. Therefore, methods for bias correction have been proposed in the treatment effect literature \citep{abadie2011bias, Lin2023Econometrica}. These however involve nonparametric estimation and rates of convergence for   regression functions and their derivatives, potentially under the strong smoothness assumptions.

In this paper we introduce simple modifications of nearest-neighbor and matching estimators for such functionals  which do not involve nonparametric function estimators. In particular our estimators do not rely on sample-size dependent smoothing parameters. However, we show that, under mild smoothness assumptions on the unknown regression function, they converge at the parametric $\sqrt n $-rate again. The upper bounds are complemented by a discussion of lower bounds which show that some smoothness is required for $\sqrt n$-consistent estimation to be possible. 

More precisely, consider a $d$-vector of covariates $Z$ supported on some compact and convex set $S \subseteq \mathbb{R}^d$ and having a  strictly positive  density $f_Z$ on $S$. Further let $S^* \subseteq S$ be some given non-empty Borel set. We consider two closely related estimation problems, in which we focus on the multivariate setting $d \geq 2$.  
%Consider a random vector $(U,Z)$, together with a given real-valued measurable function $g(u,z)$. 
%From $(U,Z)$ we form the real-valued response variable $Y=g(U,Z)$. 
%

First we intend to estimate from i.i.d.~data $(U_j,Z_j)$, $j=1,\ldots,n$, distributed as $(U,Z)$, for some  given real-valued measurable function $g(u,z)$ the functional
\begin{align}\label{eq:thefunctional}
	\Psi \,   := \, \E\Big[ g(U,Z)\, \frac{1_{S^*}(Z)}{f_Z(Z)} \Big] \,  = \, \int_{S^*} G(z) \rd z\,,
\end{align} 
where 
\begin{equation}\label{eq:regressionfct}
	G(z)  := \E\big[g(U,Z) \mid Z=z \big]
\end{equation}
is the regression function of $g(U,Z)$  on $Z$. The issue in estimating $\Psi$ in \eqref{eq:thefunctional} in the representation as expected value is the unknown density $f_Z$ which occurs in the denominator. 

To illustrate the relevance of the functional $\Psi$, consider, for real-valued $U$, series estimators of a regression function $h(z)=\mathbb{E}[U|Z=z]$ in a given orthogonal basis $(\phi_n)_n$ or, more generally, a frame in $L_2(\mu)$ for some measure $\mu$. These require estimation of the inner products
	$ \langle h, \phi_n \rangle_\mu.$ 
	If it is simply assumed that $\mu$ is the distribution of the covariates $Z$ then $\langle h, \phi_n \rangle_\mu = \E[U\,\phi_n(Z)]$. However, if $\mu$ is a given measure such as the Lebesgue measure $\lambda$ on some compact domain determined by an a-priori choice of $(\phi_n)_n$ as e.g.~the Fourier basis, wavelets \citep{kerkyacharian2004regression} or Gabor frames from time-frequency analysis \citep{dahlke2022statistically} this may not be a realistic assumption for random $Z$. Then 
	%, assuming $X$ has Lebesgue density $f_X$, 
	we rather have
	\begin{equation}\label{eq:FCoeff}
		\langle h, \phi_n \rangle_\lambda = \E\Big[\frac{U\, \phi_n(Z)}{f_Z(Z)} \Big],
	\end{equation} 
	leading to the estimation problem considered in \eqref{eq:thefunctional} when putting $g(u,z)= u \cdot \phi_n(z)$. We shall analyze such series estimators with Fourier basis in detail for the Berkson errors-in-variables model \citep{zbMATH05071790}. Further examples involving functionals of the form $\Psi$ include random coefficient regression models \citep{hoderlein2017triangular, gaillac2022adaptive, Gautier2013, masten2018random}.     

Second, from independent and identically distributed observations 
$(Y_i, Z_i)$, $i=1, \ldots, n$, where $Y_i$ is real-valued, we consider estimation of the functional
\begin{equation}\label{eq:estPhi}
	\Phi = \E\Big[Y \, 1_{S^*}(Z)\, \frac{f(Z)}{f_{Z}(Z)} \Big].% = \int_{S^*}\, G(z)\, f(z)\, \dd z,
\end{equation}
Here $f$ is an additional Lebesgue density in $\R^d$. 
%
%where again  
%
%$G(z) = \E[Y | Z = z]$ 
%
If $f$ is known, by putting $g(y,z) = y\, f(z)$, this is a special case of the functional $\Psi$ with $Y$ taking the role of $U$. We devise an estimator in case of unknown $f$, from which an additional sample $X_1, \ldots, X_m$, independent of the $(Y_i, Z_i)$,  is supposed to be available. 
  As we describe in detail, the functional  $\Phi$ in turn arises in the estimation of average treatment effects \citep{abadie2011bias}. Moreover, $\Phi$ also occurs in transfer learning under covariate shift, when we have, at our disposal, labeled data from a source distribution but only unlabeled data from the target distribution. In such settings  our method allows for a novel approach to estimate importance-weighted averages.

In dimension $d=1$, estimating $\Psi$ in  \eqref{eq:thefunctional} has been studied in \citet{lewbel2007simple}. Based on the theory of order statistics and spacings they construct estimators which do not require nonparametric tuning parameters and converge at the parametric $\sqrt n$-rate. Further, they even show how to make their estimator asymptotically efficient by letting the order of the spacings diverge. However, as noted in \citet{lewbel2007simple} their method does not easily extend to multiple dimensions.  

%Further models which require estimating expected values with unknown densities in the denominator, but for which the function $g$ has to be estimated as well include average derivative estimates in single index regression models \citep{zbMATH04153712, zbMATH00446727}, or reweighting in transfer learning under covariate shift \citep{portier2023scalable}.   
%
%
%
%The aim of this paper is to provide and analyze a method to estimate \eqref{eq:thefunctional} in dimensions $d \geq 2$.  
%More specifically, 
We construct an estimator of $\Psi$ in \eqref{eq:thefunctional}  which achieves the parametric $\sqrt n$-rate with neither  dividing by a nonparametric density estimator nor estimating the regression function $G$  nonparametrically. 
Our methods involve least squares polynomial fits on cells of the $K^{\text{th}}$-order Voronoi tessellation,  that is on the $K$-nearest neighbor partition induced by the sample $Z_1, \ldots, Z_n$. We stress that our method does not have tuning parameters which must be chosen depending on the sample size and on the unknown smoothness of $G$, which would be typical of nonparametric methods. 
Compared to the analysis of the one-dimensional case $d=1$ in \citet{lewbel2007simple}, higher-order smoothness assumptions on $G$ are required to achieve the parametric rate, however, no smoothness conditions on the density $f_Z$ of the covariates are needed in our analysis. 
Moreover, using information-theoretic arguments we show that some smoothness of the regression function $G$  is indeed required in order to achieve the parametric rate.

Furthermore we extend the methodology to construct a $\sqrt n$-consistent estimator for the functional $\Phi$ in \eqref{eq:estPhi} in case of unknown density $f$. We discuss how this estimator can be applied for average treatment effects, and  show that for the special case of constant approximations it coincides with the classical matching estimator from the treatment effect literature. Using polynomial least squares fits of appropriate order depending on the dimension, we achieve a bias correction resulting in $\sqrt n$-consistent estimators under much milder smoothness assumptions than in \citet{abadie2011bias} and without invoking nonparametric regression estimators.    

In our proofs we obtain results on the moments of the volume of higher-order Voronoi cells, as well as on properties of design matrices for multivariate polynomial regression under random design which might be of some independent interest. 
Let us briefly discuss some further methodologically related work. \citet{devroye2017measure} study the asymptotic distribution for the volume of a first-order Voronoi cells, and \citet{sharpnack2023} analyzes matching estimators by relying on properties of first-order Voronoi cells. \citet{kallus2020generalized} investigate weighted matching estimators with emphasis on weights from reproducing kernels, and  \citet{samworthnearest} provides comprehensive analysis of weighted nearest neighbor estimators. Finally, \citet{hugreentibsh2022} study nonparametric estimators of a function of bounded total variation by functions which are constant on elements of the first-order Voronoi tessellation. 

The structure of the paper is as follows. In Section \ref{sec:motestbiascorr} we first motivate our method and the need for higher-order bias correction by considering and briefly analyzing a simple special case. Then in Section \ref{sec:estmainresult} we introduce our general method and state our main theoretical result for estimating $\Psi$, that is, the parametric rate of convergence under smoothness assumptions on the regression function $G$. This is extended in Section \ref{sec:matchest} to a $\sqrt n$-consistent estimator of $\Phi$. Section \ref{sec:lowerbounds} complements these upper bounds by providing lower bounds for estimating $\Psi$, which show that some smoothness of $G$ (or $f_Z$) is generally required to  estimate at the parametric rate. In Section \ref{sec:applcompute} we discuss in detail how to apply the methodology to series estimators and in particular the Berkson errors-in-variables problem, to random coefficient regression models, to the estimation of average treatment effects and to estimating importance-weighted averages in transfer learning under covariate shift. Section \ref{sec:sims} contains a simulation study, while Section \ref{sec:conclude} concludes. The main proofs of the upper bounds are given in Section \ref{sec:proofs}, while additional technical arguments as well as further simulations results are deferred to  the supplementary appendix. 

We provide a brief overview of some important notation: the Euclidean norm is denoted by $\| \cdot \|$, the dimension by $d$; and we write $\lambda$ for the $d$-dimensional Lebesgue measure; for $\kappa = (\kappa_1, \ldots , \kappa_d)^\top \in \mathbb{N}_0^d$ we denote $|\kappa| = \kappa_1 + \cdots + \kappa_d$; and, for a function $f(z)$ in $z=(z_1, \ldots, z_d)$, we denote its partial derivatives by 
$$\partial_\kappa f(z) = \frac{\partial^{|\kappa|}}{\partial z_1^{\kappa_1}\, \ldots \partial z_d^{\kappa_d}}f(z).$$ 

%Given $L \in \N_0$ set

\section{Estimation method and rate of convergence}\label{sec:estrate}

In this section we propose estimators for the functionals $\Psi$ in \eqref{eq:thefunctional} and $\Phi$ in \eqref{eq:estPhi}. We start in Section \ref{sec:motestbiascorr} with a special case of our method which illustrates the need for higher-order bias correction in multiple dimensions. Smoothing-parameter free, $\sqrt n$-consistent estimators of the functionals $\Psi$ and $\Phi$ are introduced in Sections \ref{sec:estmainresult} and \ref{sec:matchest}, respectively. These are complemented in Section \ref{sec:lowerbounds} with a discussion of lower bounds.  

\subsection{Motivation}\label{sec:motestbiascorr}

To stress the need for a higher-order bias correction, let us start by considering and briefly analyzing a simple special case for the estimator of the functional $\Psi$ in \eqref{eq:thefunctional}. Here we approximate the integral in \eqref{eq:thefunctional} by a Riemann sum based on the first-order Voronoi tessellation of $S^*$. 
Consider the first order Voronoi cells
$$ C_j \, := \, \big\{z \in S^* \mid \|z-Z_j\| < \|z-Z_k\| \, , \, \forall \ k \neq j\big\}\,. $$
An estimator of $\Psi$ is then given by
\begin{equation}\label{eq:simpleest}
	\hat{\Psi} \, := \, \sum_{j=1}^n g(U_j,Z_j) \cdot \lambda(C_j)\,, 
\end{equation} 
where $\lambda$ denotes the $d$-dimensional Lebesgue measure. Denoting by $\sigma_Z$ the $\sigma$-field generated by $Z_1,\ldots,Z_n$ we obtain that 
$$ \E\big[ \hat{\Psi} \mid \sigma_Z\big] = \sum_{j=1}^n G(Z_j) \cdot \lambda(C_j),$$
a discretized version of \eqref{eq:thefunctional}. Here $G$ is the regression function defined in \eqref{eq:regressionfct}. 

 In the decomposition of the mean squared error
	\begin{align}\label{eq:condvarbias}
		\E \big[\big|\hat{\Psi} - \Psi\big|^2\big] &  \, = \, \E \, \big[\mbox{var}\big(\hat{\Psi} | \sigma_Z\big)\big] \, + \, \E \big[\big|\E[\hat{\Psi} | \sigma_Z] - \Psi\big|^2\big] \,,
	\end{align}
	under the Assumptions \ref{ass:deigndens} and \ref{ass:boundeffect} below and if $\mbox{var}\big(g(U_1,Z_1) | Z_1\big)$ is uniformly bounded one can bound the expected conditional variance by  
	$ \E\big[ \mbox{var}\big(\hat{\Psi} | \sigma_Z\big) \big] \lesssim n^{-1}$ from above.
	%
	%
	%To show why this simple estimator does not achieve the parametric rate in dimensions $d >3$ and even has issues for $d=2$, consider the  and expected  squared conditional bias decomposition 
	%
	%say by the constant $\CV>0$, we have that 
	%
	%\begin{align*}% \label{eq:var}
	%	 \E\big[ \mbox{var}\big(\hat{\Psi} | \sigma_Z\big) \big] & \, = \, \E\Big[  \sum_{j=1}^n \lambda^2(C_j) \cdot \mbox{var}\big(g(U_j,Z_j) | Z_j\big)\Big] \, \leq \, n\, \CV \, \E \big[\lambda^2(C_1)\big] , \end{align*}	
%
%since the $\lambda^2(C_j) $ are identically distributed and 
%
%Now  $ \displaystyle \E \big[\sum_{j=1}^n \lambda^2(C_j)\big] = n \, \E \big[\lambda^2(C_1)\big]$, 
%Lemma \ref{lem:vorcellorder} in Section \ref{sec:expcondvar} implies that $\E \big[\lambda^2(C_1)\big] \lesssim n^{-2} $.   
%
%\hajo{muss noch formuliert werden}
%so
%
However, for the squared conditional bias, if $G$ is Lipschitz continuous with constant $\CH>0$ we have that
\begin{align*}% \label{eq:bias1}
	&	\E \big[\big|\E[\hat{\Psi} | \sigma_Z] - \Psi\big|^2\big]   \, = \, \E \Big[\Big|\sum_{j=1}^n \int_{C_j} \big(G(Z_j)-G(z)\big) \rd z\Big|^2\Big]\,\\
	\, \leq \, &\, \CH^2\, \sum_{j,k=1}^n \iint_{(S^*)^2}\,  \E\big[1_{C_j}(z)\, 1_{C_k}(z')\, \|z - Z_{(1)}(z)\| \,\|z' - Z_{(1)}(z')\|\,  \big] \, \rd z \, \rd z' \, \lesssim \, n^{-2/d},	
\end{align*}
where $Z_{(1)}(z)$ denotes the first-nearest neighbor of $z$ in $Z_1, \ldots, Z_n$, and the final bound follows from Lemma \ref{lem:biasone} in the Section \ref{sec:squaredcondbias}. Thus in dimensions $d \geq 3$ this upper bound on the order of the bias is slower than the parametric rate, and even for $d=2$ it is not negligible compared to the variance.
To improve the rate of the bias, instead of the constant approximation to $G$ on each Voronoi cell, we use polynomial approximation. In order to make this well-defined and computable we require higher-order Voronoi cells. 

%$$ \E\big[ \mbox{var}\big(\hat{\Psi} | \sigma_Z\big) \big] \lesssim n^{-1}.$$

\subsection{Asymptotically unbiased, $\sqrt n$-consistent estimation of $\Psi$}\label{sec:estmainresult}

Our general method for estimating $\Psi$ in \eqref{eq:thefunctional} uses polynomial approximation of degree $L$ and the $K$-th order Voronoi tessellation of $S^*$ for sufficiently large integers $K,L \in \N$ depending on the dimension $d$.  
For each $J \subseteq \{1, \ldots, n\}$ with $\# J=K$ we set
\begin{equation}\label{eq:voronoihighorder}
C(J) \, := \, \big\{z \in S^* \, \mid \, \|z-Z_{j}\|  < \|z - Z_{k}\| \quad \forall \ j \in J, \ k \in \{1,\ldots,n\} \backslash J \big\}\,.
\end{equation} 
Then for  $z \in C(J)$ we let
$\widehat{G}(z)$ be the least squares polynomial of degree  $L$  in  $d$ variables when regressing  $g(U_j,Z_j)$ on $Z_j$ for indices $j \in J$. The exact definition of $\widehat{G}(z)$ is given in (\ref{eq:estimagexp}). Let ${\cal J}_K$ be the collection of all subsets of $\{1,\ldots,n\}$ with exactly $K$ elements. We set 
%
%\hajo{Notation $\tilde G$ oder $\hat G$}
%
\begin{equation}\label{eq:estfinal}
\hat{\Psi} \, =\, \hat{\Psi}(L,K) \, := \, \sum_{J\in {\cal J}_K} \int_{C(J)} \, \widehat{G}(z) \, \rd z\,. 
\end{equation} 
The parameters $K$ and $L$ need not be chosen depending on the sample size $n$ and hence do not take the role of smoothing parameters. They simply have to be sufficiently large depending on the dimension $d$, see Remark \ref{rem:choiceLK}. Also note that $\widehat{G}(z)$ is a weighted average of $g(U_j,Z_j)$ for those $j$ corresponding to the $K$-nearest neighbors of $z$ in $Z_1, \ldots, Z_n$. Specifically, for $L=0$ we obtain
\begin{equation}\label{eq:estlto0}
\hat{G}(z) \, = \, \frac1K\, \sum_{j\in J} g(U_{j},Z_{j}) \quad \text{if } z \in C(J), \quad \text{ and}\quad  \hat{\Psi} \, = \, \sum_{J\in {\cal J}_K} \, \Big(  \frac1K\,\sum_{j\in J} g(U_{j},Z_{j})\Big)\,\lambda(C(J))\,. 
\end{equation}

For general $L$, more formally we introduce the notation
%recall the notation ${\cal K}$ from \eqref{eq:indexset} and set  
%
%\begin{equation}\label{eq:indexset}
%\end{equation}
%
%
\begin{align*}
%{\cal K} & \, := \, {\cal K}(d,L) \, := \, \big\{\kappa \in \mathbb{N}_0^d \mid \kappa_1 + \cdots + \kappa_d \leq L\big\}\,, \qquad K^*= K^*(d,L) = \# {\cal K},\\
{\cal K}  & \, := \, {\cal K}(d,L) \, := \, \big\{\kappa \in \mathbb{N}_0^d \mid |\kappa| \leq L\big\}\,, \\%\qquad K^*= K^*(d,L) = \# {\cal K},\\
\xi_\kappa(z,w)  & \, := \, \prod_{k=1}^d (w_k - z_k)^{\kappa_k}\,, \ \kappa \in \mathcal K, \ w \in \mathbb{R}^d\,,\qquad \qquad \xi(z,w) = \big( \xi_\kappa(z,w) \big)_{\kappa \in \cK}.
\end{align*}
Monomials of degree $\leq L$ are indexed by elements in ${\cal K}$, so that $K^*$ is the number of free parameters of a polynomial in $d$ variables of total degree $\leq L$ (see (\ref{eq:constants}) for its precise definition). We consider $\xi_\kappa(z,w)$ as a monomial function in $w$, centered at $z$. Then $\xi(z,w)$ is a vector of functions in $w$ of dimension $K^*$. 

For $z \in S^*$ let $J(z)$ denote that $J \in {\cal J}_K$ which satisfies $z \in C(J)$, and note that Lebesgue-almost all $z \in S^*$ belong to some $C(J)$, since the boundaries of the $C(J)$ are sets of measure $0$. 

The least squares polynomial $\widehat{G}(z)$  of degree  $L$  in  $d$ variables at $z$  when regressing $Z_j$ on $g(U_j,Z_j)$ for indices $j \in J$ is obtained as the first entry $\gamma_{(0,\ldots,0)}$ of the vector
$$ \argmin_{\gamma \in \mathbb{R}^{\cal K}} \sum_{j\in J} \big(g(U_{j},Z_{j}) - \gamma^\top  \cdot \xi(z,Z_{j})\big)^2\,  = {\cal M}(z)^{-1} \hat{\cal G}(z), $$

%$$ \argmin_{\gamma \in \mathbb{R}^{\cal K}} \sum_{j\in J} \Big|g(U_{j},Z_{j}) - \sum_{\kappa\in{\cal K}} \gamma_\kappa \cdot \xi_\kappa(z,Z_{j})\Big|^2\, : = {\cal M}(z)^{-1} \hat{\cal G}(z), $$
%
where
\begin{align*}
{\cal M}(z) & \, := \,\sum_{j\in J(z)} \xi(z,Z_{j})\, \xi(z,Z_{j})^\top \, = \,  \Big(\sum_{j\in J(z)} \xi_{\kappa+\kappa'}(z,Z_{j})\Big)_{\kappa,\kappa' \in {\cal K}}\,,\\
\widehat{\cal G}(z) & \, := \, \sum_{j\in J(z)} g(U_{j},Z_{j}) \cdot \xi(z,Z_{j})\,,
\end{align*}
and the matrix ${\cal M}(z)$ is invertible and, hence, strictly positive definite almost surely for sufficiently large $K$ (this will be shown in Lemma \ref{L:u0bound}), making the estimator $\widehat{G}(z)$ uniquely defined, see Section \ref{sec:expcondvar}. 
Thus, we can write 
\begin{equation}\label{eq:estimagexp}
\widehat{G}(z) \, := \, {\bf e}_0^\top {\cal M}(z)^{-1} \widehat{\cal G}(z)\, = \, \sum_{j\in J(z)} g(U_{j},Z_{j}) \cdot {\bf e}_0^\top \, {\cal M}(z)^{-1} \, \xi(z,Z_{j}) 
\end{equation}
with the unit row vector ${\bf e}_0$ whose component for $\kappa = {\bf 0}$ is $1$.

\medskip

Our main result requires the following two assumptions. 

\begin{ass}[Support and design density]\label{ass:deigndens}
The support $S$ of $Z$ is a compact and convex subset of $S \subseteq \mathbb{R}^d$. The diameter of $S$ is called $\rho''$. The distribution of $Z$ has a Lebesgue-density $f_Z$ that is  bounded away from zero by some $\rho>0$ on $S$ and bounded from above by $\overline{\rho}$. %is supported on some compact and convex set $S \subseteq \mathbb{R}^d$;
\end{ass}

\begin{ass}[Exclusion of boundary effects]\label{ass:boundeffect}
We assume that there exists some $\rho'>0$ such that the Euclidean balls $B_{d}(z, \rho')$ with the center $z$, for all $z \in S^*$, and the radius $\rho'$ are included in $S$ as a subset. 
\end{ass}

Let $B_{d}(S^*,\rho') = \bigcup_{z \in S^*} B_{d}(z, \rho')$ denote the open $\rho$-neighborhood of $S^*$. We assume that the restriction of the regression function $G$ to $B_{d}(S^*,\rho')$ belongs to the Hölder class $\cG(l, \beta, \CH)$ with parameters $l \in \N_0$, $0< \beta \leq 1$, $\CH>0$ of functions on $B_{d}(S^*,\rho')$ defined as follows:
\begin{align}
\cG(l, \beta, \CH) & = \Big\{ h: B_{d}(S^*,\rho') \to \R \mid h \text{ has partial derivatives of order } \leq l, \nonumber \\
& \qquad \qquad \text{for each } \kappa \in \N_0^d, \text{ with } |\kappa| = l \text{ we have that }\label{eq:hoelderclass}\\ 
& \qquad \qquad \big|\partial_\kappa h(z) - \partial_\kappa h(z') \big| \leq \CH\, \| z - z'\|^{\beta},\quad z,z' \in B_{d}(S^*,\rho')\Big\}.\nonumber
\end{align}
Furthermore we set
\begin{align} \nonumber 
K^* & = K^*(d,L) = \# {\cal K} = \sum_{l=0}^L\, \binom{d+l-1}{l}  = {d+L \choose L}, \\ \label{eq:constants}
D & = D(d,L ) = \sum_{l=1}^L\, l\, \binom{d+l-1}{l}  = d\cdot {d+L \choose L-1}. 
\end{align}
%
%Note that $K^*(d,L)$ is the number of monomials in $d$ variables of degree at most $L$. 
%
\begin{theorem}\label{th:theoremparrate}
%
%\hajo{$d \geq 2$ nur benötigt für $L \geq 1$?} \alex{Ich denke, dass wir den Fall $d=1$ nicht extra zu erw\"ahnen brauchen, da man dort sowieso eher mit den Ordnungstatistiken arbeiten würde.}
Consider estimation of the functional $\Psi$ in \eqref{eq:thefunctional} in dimensions $d \geq 2$ under the Assumptions \ref{ass:deigndens} and \ref{ass:boundeffect}. Suppose that the regression function $G$ in \eqref{eq:regressionfct} belongs to the Hölder class $\cG(l, \beta, \CH)$ in \eqref{eq:hoelderclass} with parameters $l \in \N_0$, $0< \beta \leq 1$, $\CH>0$. Moreover assume that the conditional variance $\mbox{var}\big(g(U_1,Z_1) | Z_1\big)$ is uniformly bounded by a constant $\CV>0$.  
In the estimator $\hat \Psi(L,K)$ in \eqref{eq:estfinal}, choose $L = l$. If $L=l=0$ take any fixed $K \geq 1$, while for $L \geq 1$ choose some $K \geq 2 + (2D + 1)\, K^*,$ with $K^*$ and $D$ defined in \eqref{eq:constants}. 
%\item Regressions function $G$ is $l$ - times partially differentiable, , 
%\begin{align*}
%K^*(d,L) & \, := \, \text{ number of monomials of order } \leq L,\\
%D(d,L) & \, := \, \sum_{\ell=1}^L \ell \cdot \#\big\{\kappa\in \mathbb{N}_0^d \, : \, \kappa_1 + \cdots + \kappa_d = \ell\big\}\,.
%\end{align*}
Then for the mean conditional variance and the mean squared conditional bias given the $\sigma$-field $\sigma_Z$ generated by $Z_1, \ldots, Z_n$ we have that
\begin{align}
	& \sup_{G \in \cG(l, \beta, \CH)} \E\big[ \mathrm{var}\big(\hat{\Psi} | \sigma_Z\big) \big] \leq \CV \cdot C \cdot  n^{-1},\nonumber\\ 
	& \sup_{G \in \cG(l, \beta, \CH)} \E \big[\big|\E[\hat{\Psi} | \sigma_Z] - \Psi\big|^2 \big] \leq \CH^2 \cdot C\cdot  n^{-\frac{2(l + \beta)}{d}},\label{eq:boundsth}
\end{align} 
where the constant $C>0$ only depends on $L,K,\rho, \rho',\rho'',\bar \rho, d$ and $\lambda (S^*)$. 
Thus, if 	$l + \beta \geq d/2$, 
$$ \sup_{G \in \cG(l, \beta, \CH)} \E \big[\big|\hat{\Psi}  - \Psi\big|^2 \big] \lesssim n^{-1},$$
and the contribution of the squared bias is negligible if $l + \beta > d/2$. 
\end{theorem}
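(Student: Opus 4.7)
The approach is to analyze the two terms in the bias-variance decomposition \eqref{eq:condvarbias} separately, exploiting the linear representation $\hat\Psi = \sum_{j=1}^n g(U_j, Z_j)\, W_j$ obtained from \eqref{eq:estimagexp} by interchanging summation and integration, with weights
\[
 W_j \, = \, \int_{T_j} w_j(z)\,\mathrm{d}z,\quad  w_j(z) \, = \, \mathbf{e}_0^\top \mathcal{M}(z)^{-1} \xi(z, Z_j),\quad T_j \, = \, \{z \in S^* : j \in J(z)\}.
\]
Two structural properties of $w_j$ drive the analysis: (i) exact reproduction, $\sum_{j \in J(z)} w_j(z)\, P(Z_j) = P(z)$ for every polynomial $P$ of degree $\leq L$, which is immediate from the normal equations; and (ii) a uniform bound $|w_j(z)| \leq C/K$ on $S^*$, granted by Lemma~\ref{L:u0bound} once $K \geq 2 + (2D + 1)K^*$.

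For the variance, conditional independence of $U_1, \ldots, U_n$ given $\sigma_Z$ combined with the uniform bound on $\var(g(U_j, Z_j) \mid Z_j)$ yields $\var(\hat\Psi \mid \sigma_Z) \leq \CV \sum_j W_j^2$, and from (ii) one gets $|W_j| \leq (C/K)\, \lambda(T_j)$. Since $T_j$ is the $K$-th order Voronoi neighborhood of $Z_j$ intersected with $S^*$, a second-moment bound $\E[\lambda(T_j)^2] \lesssim (K/n)^2$, which I would derive by comparing $T_j$ with a first-order cell in an enlarged sample using only Assumption~\ref{ass:deigndens}, delivers $\E[\sum_j W_j^2] \lesssim 1/n$, i.e.\ the first line of \eqref{eq:boundsth}.

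For the bias, applying (i) to the degree-$l = L$ Taylor polynomial $P_z$ of $G$ at $z$ gives
\[
\E[\hat G(z) \mid \sigma_Z] - G(z) \, = \, \sum_{j \in J(z)} w_j(z)\, \big(G(Z_j) - P_z(Z_j)\big).
\]
The Hölder hypothesis bounds the Taylor remainder by $\CH\, \|Z_j - z\|^{l+\beta}$, while (ii) together with $\|Z_j - z\| \leq R_K(z)$ for $j \in J(z)$, where $R_K(z)$ denotes the distance from $z$ to its $K$-th nearest neighbor, collapses this to $|\E[\hat G(z) \mid \sigma_Z] - G(z)| \leq C\, \CH\, R_K(z)^{l+\beta}$. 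Cauchy-Schwarz in $z$ and Fubini then reduce the squared conditional bias of $\hat\Psi$ to $\lambda(S^*)\, C^2\, \CH^2 \int_{S^*} \E[R_K(z)^{2(l+\beta)}]\,\mathrm{d}z$, and a moment estimate $\E[R_K(z)^{2(l+\beta)}] \lesssim n^{-2(l+\beta)/d}$, uniform in $z$ thanks to Assumption~\ref{ass:boundeffect}, produces the second line of \eqref{eq:boundsth}. Summing the two parts and comparing $n^{-1}$ with $n^{-2(l+\beta)/d}$ gives the final claim.

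The main obstacle is the uniform weight bound $|w_j(z)| \leq C/K$. The entries of $\mathcal{M}(z)$ live at different scales $\|Z_j - z\|^{|\kappa + \kappa'|}$ and the bound must hold uniformly in $z \in S^*$; I would first rescale $\mathcal{M}(z)$ by the local $K$-th nearest-neighbor radius $R_K(z)$ to normalize these scales, and then show that the rescaled Gram matrix has smallest eigenvalue of order $K$ with sufficient probability - this is precisely where the explicit lower bound $K \geq 2 + (2D+1)K^*$ enters, and is the content of Lemma~\ref{L:u0bound}. The secondary technical hurdle is the moment control of $\lambda(T_j)$ and $R_K(z)$ for higher-order Voronoi tessellations using only the two-sided bounds on $f_Z$, with no smoothness assumption - a feature that keeps the method smoothing-parameter free and is what distinguishes the bias analysis from the classical bias-correction approach in the matching literature.
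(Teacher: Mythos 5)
Your decomposition of $\hat\Psi$ into a weighted linear combination $\sum_j g(U_j,Z_j) W_j$, the polynomial reproduction property, and the overall bias--variance plan all match the structure of the paper's proof in spirit. The critical gap is your claim \emph{(ii)}, the uniform deterministic bound $|w_j(z)| \leq C/K$, which you say is ``granted by Lemma~\ref{L:u0bound}.'' That lemma does \emph{not} give a uniform bound: it only states $\mathbb{E}\big[({\bf e}_0^\top \mathcal{M}_J(z)^{-1} {\bf e}_0)^\eta \mid \mathfrak{A}\big] \leq C_3$ a.s., i.e.\ a \emph{conditional moment} bound. Indeed, no deterministic uniform bound on $w_j(z)$ can exist when $L\geq 1$: on the event that the $K$ nearest neighbours of $z$ cluster near a lower-dimensional affine set, $\mathcal{M}(z)$ becomes nearly singular and $\sum_{j\in J(z)} w_j(z)^2 = {\bf e}_0^\top \mathcal{M}(z)^{-1}{\bf e}_0$ is unbounded. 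The choice $K\geq 2 + (2D+1)K^*$ makes this degeneracy rare enough for moments to be controlled (via the small-ball estimate on the Vandermonde determinant in Lemmas~\ref{L:01}--\ref{L:02}), but does not exclude it pathwise. Your own later remark that the rescaled Gram matrix has a spectral lower bound only ``with sufficient probability'' contradicts the uniform bound you use upstream.

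Because the uniform bound fails, both your variance step ($|W_j|\leq (C/K)\lambda(T_j)$) and your bias step (collapsing $\sum_{j\in J(z)} w_j(z)(G(Z_j)-P_z(Z_j))$ to $C\,\CH\, R_K(z)^{l+\beta}$) do not go through for $L\geq 1$; both silently assume the weights are bounded. The real difficulty, which the paper's proof addresses and your proposal omits, is that ${\bf e}_0^\top \mathcal{M}_J(z)^{-1}{\bf e}_0$ is \emph{correlated} with the geometric quantities ($\lambda(C(J))$, $R_K(z)$, or the exponential factors $\exp(-n^* \alpha^d)$) through the same $Z_j$'s, so one cannot simply multiply a moment bound into the integral. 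The paper resolves this by Cauchy--Schwarz to symmetrize the two cells, then a delicate conditioning argument: it introduces the $\sigma$-fields $\mathfrak{A}'\supseteq\mathfrak{A}$ generated by the index $\widehat{j}$ of the farthest neighbour in $J$, its location $Z_{\widehat{j}}$, and $\alpha'=\max_{j\in J'\setminus J}\|z-Z_j\|$, observes that $\alpha^*$ is $\mathfrak{A}'$-measurable while $({\bf e}_0^\top\mathcal{M}_J(z)^{-1}{\bf e}_0,\widehat{j},Z_{\widehat{j}})$ is independent of $\alpha'$, and then applies Lemma~\ref{L:u0bound} \emph{conditionally on $\mathfrak{A}$}. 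This is exactly the missing machinery needed to replace ${\bf e}_0^\top\mathcal{M}_J(z)^{-1}{\bf e}_0$ by a constant inside the surviving expectation. Without it your argument only closes for $L=0$, where $w_j(z)\equiv 1/K$ is genuinely deterministic.
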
	
The main steps of the proof are provided in Section \ref{sec:proofthe1}, with some technical details being deferred to Section \ref{sec:technicaldetails}  in the supplementary appendix. 
\begin{remark}[Choice of $L$ and $K$]\label{rem:choiceLK}  For nested smoothness classes, e.g.~if lower-order derivatives are  uniformly bounded in addition, one can work with minimal smoothness: making the bias negligible requires $l + \beta > d/2$, which is satisfied for $l = \lfloor d/2\rfloor$ and $\beta >0$ for even $d$ as well as $\beta > 1/2$ and odd $d$, as well as for all $l \geq \lfloor d/2\rfloor +1$. Thus, if smoothness is such that the bias can be made negligible this is always achieved by choosing the polynomial degree $L=\lfloor d/2 \rfloor$, resulting in $L=1$ in the important cases $d=2$ and $d=3$.

	Next, for given dimension $d$ and order of the polynomial approximation $L$, the theorem specifies a minimal choice $K \geq 2 + (2D + 1)\, K^*$, $K^*$ in \eqref{eq:constants}, under which the upper  risk bounds are theoretically guaranteed. 
Specifically, the minimal value of $K$ for $L=1$ is $K = 2 + (2\, d+1)\, (1+d)$. For $d=2$ and $L$ our result requires $K= 17$, and for $d=3$ and $L=1$  we need $K=30$ in our theoretical result.  
%Computation of the estimator is numerically challenging and requires large sample sizes for such high values of $K$. 
In the simulations, we illustrate that smaller values of $K$ often seem to suffice.%, and give some practical recommendations for the choice of $K$ for $L=1$ depending on $d$. 
%
%\begin{itemize}
%   \item $L=1$: \quad . 
%    \item $L=2$: \quad $K = 2 + \big(2\, (d+2)\,d+1\big)\, \big(1+d + (d+1)\,d/2\big)$,
%\end{itemize}

\end{remark}

%\begin{remark}[Constants and boundary effect]\label{rem:choiceLK}
%
%Constants in \eqref{eq:boundsth}, boundary effect  \alex{noch}

%\end{remark}

%\begin{remark}[Weighted nearest neighbor estimators]\label{rem:choiceLK}
%    \alex{noch}
%\end{remark}

\begin{remark}[Higher moments of the volume of $C(J)$]\label{rem:highmom}
For the derivation of upper bounds as in Theorem \ref{th:theoremparrate}, the asymptotic properties of the $d$-dimensional Lebesgue measure $\lambda(C(J))$ of the Voronoi cells as $n\to\infty$ are essential. Since 
$$ \lambda(S^* ) = \sum_{J \in {\cal J}_K} \lambda(C(J))$$
and the $C(J)$ are identically distributed we have that $\E[\lambda(C(J))] = \cO(n^{-K}) $. Hence $\lambda(C(J)) = \cO_\Pr(n^{-K}) $ which implies $\lambda(C(J))^\ell = \cO_\Pr(n^{- \ell\, K}) $. 
As a surprising phenomenon the higher moments of this term show an unusual behaviour since  $\E[\lambda(C(J))^\ell] \simeq n^{-\ell-K+1}$. The upper bound is provided in Lemma \ref{lem:vorcellorder} for $\ell = 2$, and for the lower bound see Section \ref{sec:technicaldetails} in the supplementary appendix. 
\end{remark}

\subsection{Asymptotically unbiased matching estimators for $\Phi$}\label{sec:matchest}

Now let us turn to estimating the functional $\Phi$ in \eqref{eq:estPhi} in the setting where $f$ is unknown. 
Suppose that we have an additional independent sample $X_1, \ldots, X_m$, each $X_j$ with density $f$, independent of $(Y_1, Z_1), \ldots, (Y_n, Z_n)$. 
One possibility to estimate \eqref{eq:estPhi} is to use a nonparametric estimator $\hat f$ of $f$ from $X_1, \ldots, X_m$, and then to employ \eqref{eq:estfinal} with the estimated function $\hat g(y,z) = y\, \hat f(z)$. 
Alternatively, we can avoid a nonparametric estimator of $f$ and proceed as follows. 

Consider the Voronoi cells as in \eqref{eq:voronoihighorder}, and for $J \subseteq \{1, \ldots, n\}$ with $\# J=K$, that is $J \in \cJ_K$, 
and   $z \in C(J)$ we let
$\widehat{G}(z)$ be the least squares polynomial fit of order $L$ as in \eqref{eq:estimagexp}, with the observed $Y_j$ replacing $g(U_{j},Z_{j})$. Then we set
\begin{equation}\label{eq:matchest}
\widehat{\Phi} = \widehat{\Phi}(L,K) = \frac1m\, \sum_{k=1}^m\, \sum_{J \in \cJ_K} \, 1\big(X_k \in C(J)\big)\, \widehat{G}(X_k).
\end{equation}
Consider the class of functions
$$ \cG(l, \beta, \CH, C_G) = \Big\{ h: B_{\rho'}(S^*) \to \R \mid h \in \cG(l, \beta, \CH),\ |h(z)| \leq C_G,\  \forall z \in B_{\rho'}(S^*)\Big\}.$$
\begin{theorem}\label{th:theoremparratematch} %\alex{Manches aus Theorem 1 musste  angepasst werden, bitte nochmals überprüfen}
Consider estimation of the functional $\Phi$ in \eqref{eq:estPhi} in dimensions $d \geq 2$ under the Assumptions \ref{ass:deigndens} and \ref{ass:boundeffect}. %As in Theorem \ref{th:theoremparrate}, suppose that the regression function $G$ in \eqref{eq:regressionfct} belongs to the Hölder class $\cG(l, \beta, \CH)$ in \eqref{eq:hoelderclass} with parameters $l \in \N_0$, $0< \beta \leq 1$, $\CH>0$. 
Moreover assume that the conditional variance $\mbox{var}\big(Y_1 | Z_1\big)$ is uniformly bounded by a constant $\CV>0$, and that the density $f$ is bounded from above by $\Cf$.   
In the estimator $\hat \Phi(L,K)$ in \eqref{eq:matchest}, choose $L = l$. If $L=l=0$ take any fixed $K \geq 1$, while for $L \geq 1$ choose some $K \geq 2 + (2D + 1)\, K^*,$ with $K^*$ and $D$ defined in \eqref{eq:constants}. 
Then 
\begin{equation}\label{eq:riskboundmatch}
	\sup_{G \in \cG(l, \beta, \CH, C_G)} \E \big[\big|\hat{\Phi}  - \Phi\big|^2 \big] \leq C\, \big((\CV + C_G^2)\, m^{-1} + \CV\,   n^{-1}  +  \CH^2 \, n^{-\frac{2(l + \beta)}{d}}\big),
\end{equation} 
where the constant $C>0$ only depends on $L,K,\rho, \rho',\rho'',\bar \rho, d, \lambda (S^*)$ and $\Cf$. 
Thus, if 	$l + \beta \geq d/2$, 
$$ \sup_{G \in \cG(l, \beta, \CH, C_G)} \E \big[\big|\hat{\Phi}  - \Phi\big|^2 \big] \lesssim n^{-1} + m^{-1},$$
and the contribution of the squared bias is negligible if $l + \beta > d/2$. 
\end{theorem}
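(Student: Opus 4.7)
The plan is to reduce the analysis to the already-analyzed estimator of Theorem~\ref{th:theoremparrate} by conditioning on $\sigma_{YZ}$, the $\sigma$-field generated by the "training" sample $(Y_1,Z_1),\ldots,(Y_n,Z_n)$, and exploiting the independence of the "matching" sample $X_1,\ldots,X_m$. First I would rewrite $\widehat \Phi = m^{-1}\sum_{k=1}^m 1_{S^*}(X_k)\widehat G(X_k)$, which is justified because the cells $C(J)$, $J\in{\cal J}_K$, tessellate $S^*$ up to a Lebesgue null set. Since $X_k\sim f$ is independent of $\sigma_{YZ}$ we have
\begin{equation*}
\widehat \Phi^{**} \;:=\; \E[\widehat \Phi\mid \sigma_{YZ}] \;=\; \int_{S^*}\widehat G(x)\,f(x)\,\mathrm{d}x,\qquad \Phi \;=\; \int_{S^*} G(x)\,f(x)\,\mathrm{d}x,
\end{equation*}
so the standard variance--mean decomposition yields
\begin{equation*}
\E\big[(\widehat\Phi-\Phi)^2\big] \;=\; \E\big[\mathrm{Var}(\widehat\Phi\mid\sigma_{YZ})\big] \;+\; \E\big[(\widehat\Phi^{**}-\Phi)^2\big].
\end{equation*}

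The second term is handled by exactly the same arguments as in the proof of Theorem~\ref{th:theoremparrate}: $\widehat\Phi^{**}$ has the very same algebraic form as $\widehat\Psi$, only with the weight $f(x)\,\mathrm{d}x$ replacing $\mathrm{d}x$ in every integral. Because $f\leq \Cf$, the conditional variance bound and the conditional squared bias bound in \eqref{eq:boundsth} transfer verbatim at the price of an extra factor depending on $\Cf$, giving $\E[(\widehat\Phi^{**}-\Phi)^2]\leq C(\CV\, n^{-1}+\CH^2 n^{-2(l+\beta)/d})$. The first term, using that $X_1,\ldots,X_m$ are i.i.d.\ given $\sigma_{YZ}$, satisfies
\begin{equation*}
\mathrm{Var}(\widehat\Phi\mid\sigma_{YZ}) \;\leq\; \frac{1}{m}\,\E\big[\widehat G(X_1)^2 1_{S^*}(X_1)\mid\sigma_{YZ}\big] \;\leq\; \frac{\Cf}{m}\int_{S^*}\widehat G(x)^2\,\mathrm{d}x.
\end{equation*}
Splitting $\widehat G=G+(\widehat G-G)$ and using $|G|\leq C_G$ reduces everything to controlling $\E[\int_{S^*}(\widehat G(x)-G(x))^2\,\mathrm{d}x]$.

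This last reduction is the main obstacle, because Theorem~\ref{th:theoremparrate} only controls \emph{integrated} variance/bias of $\widehat G$ (i.e.\ the variance of $\int\widehat G$, which is strictly smaller than $\int\mathrm{Var}(\widehat G)$), whereas here we need pointwise bounds integrated against $\mathrm{d}x$. I would use the identity $\mathrm{Var}(\widehat G(x)\mid\sigma_Z)\leq \CV\,{\bf e}_0^\top{\cal M}(x)^{-1}{\bf e}_0$, which follows from $\mathcal M(x)=\sum_{j\in J(x)}\xi(x,Z_j)\xi(x,Z_j)^\top$, combined with Lemma~\ref{L:u0bound} to obtain a uniform bound on $({\cal M}(x)^{-1})_{0,0}$, and the pointwise Hölder/Taylor bound $(\E[\widehat G(x)\mid\sigma_Z]-G(x))^2\leq \CH^2\max_{j\in J(x)}\|x-Z_j\|^{2(l+\beta)}$ together with the higher-order Voronoi moment bounds invoked in the proof of Theorem~\ref{th:theoremparrate}. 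These yield $\E[\int_{S^*}(\widehat G-G)^2\,\mathrm{d}x]\leq C(\CV+\CH^2\,n^{-2(l+\beta)/d})$. Plugging back and absorbing the harmless term $\CH^2 n^{-2(l+\beta)/d}/m$ into $\CH^2 n^{-2(l+\beta)/d}$ gives the claimed bound~\eqref{eq:riskboundmatch}; the corollary for $l+\beta\geq d/2$ is then immediate.
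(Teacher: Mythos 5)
Your proof is correct, but it organizes the variance–bias decomposition differently from the paper. You condition first on the training sample, $\sigma_{YZ}$, isolating the $X$-sampling variance $\mathrm{Var}(\widehat\Phi\mid\sigma_{YZ})$, and then treat $\widehat\Phi^{**}=\int_{S^*}\widehat G\,f$ via the Theorem~\ref{th:theoremparrate} machinery with $f(x)\,\mathrm dx$ in place of $\mathrm dx$. The paper instead conditions on $\sigma_{X,Z}$ (covariates plus matching sample), giving the three-term split in \eqref{eq:condvarbias2step}: the $Y$-variance given $X,Z$ (which already contains both a diagonal $O(m^{-1})$ and an off-diagonal $O(n^{-1})$ contribution), the $X$-variance of the $Y$-marginalized estimator, and the $Z$-bias. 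The two routes are equivalent in strength and use the same technical ingredients (Lemma~\ref{L:u0bound}, the Taylor remainder control, the Voronoi moment bounds), since your bound on $\E\bigl[\int_{S^*}\widehat G^2\bigr]$ is precisely the sum of the two pointwise quantities the paper controls, $\E[\mathrm{var}(\widehat G(z)\mid\sigma_Z)]$ and $\E[\bar G(z;Z)^2]$, integrated over $S^*$; the paper's route avoids invoking the \emph{pointwise} bias (your split $\widehat G = G + (\widehat G - G)$ brings in an extra $\CH^2 n^{-2(l+\beta)/d}/m$ that then has to be absorbed, which you correctly note). Two small imprecisions in your sketch: Lemma~\ref{L:u0bound} gives a bound on the \emph{conditional expectation} $\E[{\bf e}_0^\top{\cal M}(x)^{-1}{\bf e}_0\mid\mathfrak A]$, not a pointwise almost-sure bound on $({\cal M}(x)^{-1})_{0,0}$ (the latter is in fact unbounded), so the conditioning device from Step~3 of Section~\ref{sec:expcondvar} is needed; and the pointwise Taylor bound for $L\ge 1$ carries the factor ${\bf e}_0^\top{\cal M}(x)^{-1}{\bf e}_0$ in addition to $\|Z_{(K)}(x)-x\|^{2(l+\beta)}$, which again requires the conditional moment bound rather than a deterministic one. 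Neither affects the validity of the argument.
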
	
The proof is provided in Section \ref{sec:proofmatchingest}. 
\begin{remark}[Matching estimators]
For $L=0$ the estimator in \eqref{eq:matchest} corresponds to the matching estimator from the treatment effect literature \citep{abadie2006large, imbens2004nonparametric}: For $k=1, \ldots, m$ let $\{ Z_j \mid j \in J_k\}$, $J_k \subseteq \{1, \ldots, n\}$ with $\# J_k=K$ be  the set of $K$-nearest neighbors of $X_k$ in $Z_1,\ldots, Z_n$, and set $\hat Y_k = K^{-1} \, \sum_{j \in J_k} Y_j$. Then \eqref{eq:matchest} can be written as
\begin{equation*}
	\widehat{\Phi} = \frac1m\, \sum_{k=1}^m\, \sum_{J \in \cJ_K} \, 1\big(X_k \in C(J)\big)\, \frac1K \, \sum_{j \in J} Y_j =  \widehat{\Phi} = \frac1m\, \sum_{k=1}^m\, \hat Y_k.
\end{equation*}
See Section \ref{sec:averagetreat} for further discussion of estimating average treatment effects. 

\end{remark}

\subsection{Discussion of lower bounds}\label{sec:lowerbounds}

In the oracle setting when $f_Z$ is perfectly known the usual parametric rate is always attained by the estimator
$$ \hat{\Psi}_{\mbox{oracle}} \, := \, \frac1n \sum_{j=1}^n g(U_j,Z_j) / f_Z(Z_j)\,, $$
under the Assumptions \ref{ass:deigndens} and \ref{ass:boundeffect} and the condition that $\mbox{var}(g(U_1,Z_1)|Z_1)$ is bounded, where no smoothness constraints are required. 

On the other hand we can show, inspired by techniques from quadratic functional estimation (e.g. \cite{Tsybakov2009nonparametric}), that, under unknown $f_Z$, the parametric rate cannot be kept when no smoothness conditions are imposed. This occurs even in the univariate setting, as demonstrated in the following theorem. 
\begin{theorem} \label{T:low} 
Consider the classical univariate nonparametric regression model $U_j = h(Z_j) + \varepsilon_j$, $j=1,\ldots,n$. Impose that both the design density $f_Z$ is contained in the class ${\cal F}={\cal F}(\rho,\overline{\rho})$ of all measurable functions on $[0,1]$ which are bounded from above by $\overline{\rho}>1$ and from below by $\rho>0$; the regression function $h$ lies in the class ${\cal H}={\cal H}(\tilde{\rho})$ of all measurable functions bounded by $\tilde{\rho}>0$; and the regression errors are $\varepsilon_j$ are standard Gaussian and independent of $Z_j$. Then,
$$ \liminf_{n\to\infty}\, n^{1/2} \cdot \sup_{f_Z\in {\cal F},h\in {\cal H}} \, \mathbb{E}_{f_Z,h} \Big|\hat{H}_n - \int_{1/4}^{3/4} h(x) dx\Big|^2\, > \, 0\,, $$
for any sequence of estimators $\hat{H}_n$ based on the data $(U_j,Z_j)$, $j=1,\ldots,n$.  
\end{theorem}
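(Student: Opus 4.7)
The plan is to establish this lower bound by Le Cam's two-point method with a mixture alternative, following the classical Ingster--Suslina strategy for lower bounds on quadratic functionals (see e.g.\ Chapter~2 of \citet{Tsybakov2009nonparametric}). The key insight is that although $\int_{1/4}^{3/4} h$ is linear in $h$ alone, the simultaneous unknownness of $f_Z$ forces any estimator to implicitly estimate a quadratic functional of a joint perturbation of $(f_Z, h)$; the classical $n^{-1/4}$ rate for the root-MSE of a quadratic functional then translates to the desired lower bound on the MSE.

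Concretely, I would build the alternative indexed by $\omega \in \{-1, +1\}^m$, drawn from the uniform Rademacher distribution. Fix a smooth, odd, compactly supported bump $\phi$ on $(-1, 1)$ with $\int \phi = 0$ and $\|\phi\|_2 = 1$, and place disjoint rescaled translates $\phi_k(z) = m^{1/2}\, \phi(m (z - z_k))$ inside $[1/4, 3/4]$. Define the null $\pi_0$ by $f_Z^{(0)} \equiv 1$ and $h^{(0)} \equiv 0$, and the alternative $\pi_1$ by
\[ f_Z^\omega(z) = 1 + \delta \sum_k \omega_k \phi_k(z), \qquad h^\omega(z) = 1/f_Z^\omega(z) - 1. \]
The coupling is chosen so that, by the Rademacher identity $\omega_k^2 = 1$ and the odd symmetry of $\phi$ (which kills $\int \phi_k^{2\ell+1}$),
\[ \int_{1/4}^{3/4} h^\omega(z) \, dz \, = \, \delta^2 \sum_{k \in K^*} \omega_k^2 \|\phi_k\|_2^2 \, + \, o(n^{-1/4}) \, = \, \delta^2 |K^*| + o(n^{-1/4}) \]
is essentially deterministic across $\omega$. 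With $m \asymp n^{3/2}$ and $\delta \asymp n^{-7/8}$, the class constraints $f_Z^\omega \in \mathcal F(\rho, \overline \rho)$ and $h^\omega \in \mathcal H(\tilde \rho)$ are satisfied for $n$ sufficiently large.

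The core calculation is the chi-squared divergence of the $n$-sample mixture $\bar P_1^n = \E_\omega P_\omega^n$ against $P_0^n$. Using the standard identity for products of Gaussian regression likelihoods, one obtains
\[ 1 + \chi^2\big(\bar P_1^n \,\big\|\, P_0^n\big) \, = \, \E_{\omega, \omega'} \Big[\Big( \int_0^1 f_Z^\omega(z)\, f_Z^{\omega'}(z)\, e^{h^\omega(z) h^{\omega'}(z)}\,dz \Big)^n\Big]. \]
Expanding the inner integral in $\delta$ and using $\int \phi_k = 0$ and disjoint supports, the integrand equals $1 + c\,\delta^2 \sum_k \omega_k \omega'_k + O(\delta^4)$, so that the Rademacher expectation reduces (via the Rademacher-product MGF) to a $\cosh$-type factor bounded by $\exp(C\, m n^2 \delta^4)$. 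With the chosen scaling $m n^2 \delta^4 = O(1)$, this is $O(1)$.

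From bounded chi-squared, $\|\bar P_1^n - P_0^n\|_{\mathrm{TV}} < 1$, and combining with the concentration of $T(\omega) = \int_{1/4}^{3/4} h^\omega$ around $T_1 \asymp n^{-1/4}$ under $\pi_1$ versus $T_0 = 0$ under $\pi_0$, Le Cam's fuzzy two-point inequality yields the claim. The main obstacle is tracking the higher-order remainders in the two Taylor expansions: one must verify that the deterministic $O(\delta^4 m^2)$ corrections arising from the exponential $e^{h^\omega h^{\omega'}}$ do not spoil the $\cosh$ bound, and that the fluctuations of $T(\omega)$ around $\delta^2 m$ remain $o(n^{-1/4})$. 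Rademacher symmetry together with the odd symmetry of $\phi$ eliminates the odd-order contributions, while the surviving even-order deterministic shifts can be absorbed by a common renormalization of $h^\omega$, which only changes the target functional by a constant that cancels between $\pi_0$ and $\pi_1$.
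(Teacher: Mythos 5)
Your proposal adopts the right overall strategy---a fuzzy two-point argument with a Rademacher mixture on the alternative, a $\chi^2$ calculation exploiting the Gaussian cross-term identity, and Le Cam's inequality---and the coupling $h^\omega = 1/f_Z^\omega - 1$ is in the same spirit as the paper's choice $h_\theta = \beta/f_\theta$, since both make $\int f_Z\, h$ invariant and kill the first-order $\chi^2$ term. However, the specific single-parameter construction cannot be made to work, and the gap is more than bookkeeping.

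The difficulty is the deterministic $O(\delta^4 m^2)$ remainder that you flag as ``the main obstacle.'' A careful expansion of the per-observation integral
$\int f_Z^\omega f_Z^{\omega'} e^{h^\omega h^{\omega'}}$
gives, after exact cancellation of the $\delta^3$ terms, a leading deterministic correction of precisely
$\tfrac{1}{2}\delta^4 \int (\psi^\omega)^2(\psi^{\omega'})^2 = \tfrac{1}{2}\delta^4 m^2 \int \phi^4$,
where $\psi^\omega = \sum_k \omega_k\phi_k$. With $\delta \asymp n^{-7/8}$ and $m \asymp n^{3/2}$ this is $\asymp n^{-1/2}$, which by construction equals the \emph{square} of the target separation $\delta^2 m$. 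Raising the integral to the $n$-th power then gives $\bigl(1 + c_0 n^{-1/2} + \cdots\bigr)^n \to \infty$ and, since the $\omega,\omega'$-random part has zero mean, Jensen gives $1 + \chi^2 \geq (1 + c_0 n^{-1/2})^n \to \infty$. The proposed rescue by a common additive renormalization $h^\omega \mapsto h^\omega - c$ does not help: redoing the expansion with the shift, the $c\,\delta^2 m$ cross-terms cancel between $f_Z^\omega f_Z^{\omega'}$ and the exponential, and the surviving deterministic $O(n^{-1/2})$ remainder is $c^2 + \tfrac{1}{2}\delta^4 m^2 \int\phi^4$, which is minimized at $c=0$ and remains strictly positive. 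So no choice of $c$ salvages the $\chi^2$ bound.

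The structural reason your scaling cannot work, and the reason the paper's does, is the lack of a second small parameter. In the paper the density perturbation has \emph{constant} amplitude $\alpha$ while the regression perturbation carries the separate factor $\beta \asymp n^{-1/4}$; consequently $\|h_\theta - h_0\|_\infty \asymp \beta\alpha \asymp n^{-1/4}$ scales \emph{linearly} in the target separation $\tau \asymp \beta\alpha^2 \asymp n^{-1/4}$, and the deterministic quartic remainder is $\asymp (\beta\alpha)^4 \asymp n^{-1}$, safely $O(1/n)$. In your scheme the density perturbation $\delta m^{1/2} \to 0$ is itself small, and $\|h^\omega\|_\infty \asymp \delta m^{1/2} \asymp \sqrt{\tau}$ scales like the \emph{square root} of the separation, forcing the quartic remainder to equal $\tau^2 \asymp n^{-1/2}$, which is fatal. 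To fix it you would need to use bumps of constant amplitude and introduce an explicit $\beta$ in $h^\omega = \beta(1/f_Z^\omega - 1)$ with $\beta \asymp n^{-1/4}$ --- at which point your construction essentially becomes a smooth-bump version of the paper's step-function construction.
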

The proof is given in the supplement (Section \ref{Section:low}). 
Note that the regression model in Theorem \ref{T:low} is included in the general framework by putting $g(u,z)=u$ and the conditional density $f_{U|Z}(u|z) = f_\varepsilon(u - h(z))$ when $h$ denotes the regression function and $f_\varepsilon$ stands for the standard normal density. 

\begin{remark}[Continuous design density and regression functions] \label{rem:continuousdesreg}
We mention that the claim of Theorem \ref{T:low} still holds when the classes ${\cal F}$ and ${\cal H}$ are changed to the corresponding sub-classes ${\cal F}^0$ and ${\cal H}^0$ which contain only the continuous functions in ${\cal F}$ and ${\cal H}$, respectively, as ${\cal F}^0$ and ${\cal H}^0$ are dense subsets with respect to the $L_1$-metric. 
\end{remark}

\begin{remark}[Fixed design]\label{rem:fixeddesign}
Linear functional estimation in multivariate regression with fixed design is not included in the general framework of this work. In particular, Assumption \ref{ass:deigndens} is violated. Still we mention that, in this setting, no estimator can attain faster convergence rates than $n^{-2(l+\beta)/d}$ as they occur as an upper bound in Theorem \ref{th:theoremparrate}. This can be seen as follows. Assume that the design points $Z_1,\ldots,Z_n$ form an equidistant grid of some $d$-dimensional cube $S$ so that $\|Z_j-Z_k\|\geq c\cdot n^{-1/d}$ for all $j\neq k$ and some fixed constant $c>0$. Consider the competing regression functions $h_0 \equiv 0$ and $h_n(z) := \sum_{j=1}^n b^{l+\beta} \, K(\|z-Z_j\|/b)$ for some non-negative kernel function $K$ which is infinitely often differentiable on $\mathbb{R}$, supported on $[0,1]$ and satisfies $K(0)=0$ as well as $K(1/2)>0$, while $b = (c/2)\cdot n^{-1/d}$. Note that $h_0$ and $h_n$ lead to the same distribution of the data, whereas the (non-squared) distance between $\int_{S^*} h_0$ and $\int_{S^*} h_n$ is bounded from below by the rate $n \cdot b^{l+\beta}\cdot b^d \asymp n^{-(l+\beta)/d}$ for any open non-void subset $S^*$ of $S$. Note that these arguments cannot be applied to the random design case as, then, the competing regression functions must not depend on the design variables $Z_1,\ldots,Z_n$. The rate $n^{-(l+\beta)/d}$ also occurs in \citet{Kohler2014} for the $L_1$-risk in noiseless regression. \end{remark}

\section{Applications}\label{sec:applcompute}

%\subsection{Applications}\label{sec:applcompute}

In this section we present various applications of our methodology.  
In Section \ref{sec:seriesBerkson} we give a detailed analysis of 
%a given orthogonal system with respect to the Lebesgue measure, where for random design the Fourier coefficients are of the form \eqref{eq:thefunctional}. %, see \eqref{eq:FCoeff} below.
the nonparametric Berkson errors in variables problem \citep{zbMATH05071790}, where our estimator is an ingredient of a Fourier series deconvolution estimator. In Section \ref{sec:randomcoef} we indicate how estimating functionals as in \eqref{eq:thefunctional} arise in non- and semiparametric estimators in random coefficients regression models. 

Sections \ref{sec:averagetreat} on estimating average treatment effects and \ref{sec:transferlearn} on transfer learning involve the functional \eqref{eq:estPhi} with unknown density $f_0$, for which as in Section \ref{sec:matchest} an additional sample is available. For the average treatment effect on the treated and for the average treatment effect over a subset of the covariate space we achieve a bias correction resulting in $\sqrt n$-consistent estimators without involving nonparametric estimators of the regression functions and under much milder smoothness assumptions than in \citet{abadie2011bias}.  

\subsection{Berkson errors in variables models}\label{sec:seriesBerkson}

Consider the nonparametric Berkson errors-in-variables model, in which one observes $(U,Z)$ according to 
\begin{equation}\label{eq:berksonerrorsinvar}
U = h(Z+\delta) + \epsilon,
\end{equation}
where $\epsilon, \delta$ and $Z$ are independent and $\epsilon$ is centered and square-integrable. When estimating $h$ by deconvolution methods, \citet{zbMATH05071790} use a local linear estimator of the calibrated regression function $g(z) = \E[U | Z=z]$  and numerically compute its Fourier coefficients. \citet[Section 3.4.2]{Meister2009Deconvolution} shows how to avoid nonparametric estimation of $g$ in one dimension $d=1$ by using spacings. Here we extend these results to higher dimensions $d \geq 2$ using our novel approach. 

We choose the Lebesgue measure $\lambda$ and the Fourier basis 
$$\phi_{\bf j}({\bf x}) = \exp\Big(i\sum_{k=1}^d j_k x_k\Big), \qquad {\bf x} = (x_1,\ldots,x_d)$$ and the multi-index ${\bf j} = (j_1,\ldots,j_d)$ where all components $j_k$, $k=1,\ldots,d$, are integers satisfying $|j_k|\leq J_n$ for some smoothing parameter $J_n$. Assume that $\delta$ has a known $d$-dimensional Lebesgue density and let $\tilde f_{\delta}$ denote the density of $-\delta$. We derive our result under the following set of assumptions. 

\begin{ass}\label{ass:deconv}
Impose that $h$ and $f_\delta$ are supported on some fix compact subset of $(-\pi,\pi)^d$; that the support of the convolution $h*\tilde{f}_\delta$ is included in some fixed compact set $S^*$, which is contained in the interior of the compact and convex support $S$ of $f_Z$ as a subset; that $f_Z$ is bounded away from zero on $S$ (such that the Assumptions \ref{ass:deigndens} and \ref{ass:boundeffect} are satisfied); that the known error density $f_\delta$ is ordinary smooth, i.e. its Fourier coefficients $f_\delta^{\ft}({\bf j}) := \langle f_\delta , \phi_{{\bf j}} \rangle_\lambda$ satisfy 
$$ c_\delta \cdot (1+\|{\bf j}\|)^{-\gamma} \, \leq \, \big|f_\delta^{\ft}({\bf j})\big| \, \leq \, C_\delta\cdot (1+\|{\bf j}\|)^{-\gamma}\,, $$
for all ${\bf j}$ and some constants $0<c_\delta< C_\delta$ and $\gamma > d/2$, which guarantees that $f_\delta$ is square-integrable on the whole of $\mathbb{R}^d$.  
\end{ass}

Such conditions are also imposed in \citet{zbMATH05071790} and \citet[Section 3.4.2]{Meister2009Deconvolution} where, in the univariate setting without local polynomial approximation, the set $S^*$ may correspond to $S$. Now apply our estimator $\hat{\Psi}=\hat{\Psi}_{{\bf j}}$ in (\ref{eq:estfinal}) for 
$$ g(u,z)=g_{{\bf j}}(u,z)= u\cdot \phi_{{\bf j}}(z)\,, $$
so that 
$ G(z) = G_{{\bf j}}(z) = \phi_{{\bf j}}(z)\cdot \big[h*\tilde{f}_\delta\big](z)\,, $
and $S^*$ as in Assumption \ref{ass:deconv}. Finally we consider the estimator
$$ \hat{h} \, = \, (2\pi)^{-d} \, \sum_{{\bf j}\in {\bf J}^{(n)}} \hat{\Psi}_{{\bf j}} \cdot \phi_{-{\bf j}} / f_\delta^{\ft}(-{\bf j})\, $$
of the regression function $h$, where ${\bf J}^{(n)}:=\{-J_n,\ldots,J_n\}^d.$
\begin{theorem}\label{th:ratedeconv}
Consider the Berkson errors in variables model \eqref{eq:berksonerrorsinvar} under Assumption \ref{ass:deconv} for $d \geq 2$.  Furthermore suppose that the regression function $h$ fulfills the Sobolev condition 
\begin{equation}\label{eq:sobolev}
	\sum_{{\bf j}} |h^{\ft}({\bf j})|^2 \, (1+\|{\bf j}\|^{2\alpha}) \, \leq \, C_\alpha^*\,, 
\end{equation}
for constants $\alpha, C_\alpha^* > 0$, where $\alpha$ is assumed to be sufficiently large such that
$$ 4(\alpha-1)(\alpha+\gamma) \geq d(2\alpha+2\gamma+d)\,. $$ 
Then under the smoothing regime $J_n \asymp n^{1/(2\alpha+2\gamma+d)}$, the selection $L=\lceil\alpha\rceil-1$ and the choice of $K$ according to Theorem \ref{th:theoremparrate} in the construction of estimator $\hat{h}$ we obtain uniformly over the class \eqref{eq:sobolev} 
\begin{equation}\label{eq:boundberkson}
	\mathbb{E} \Big[\int_{[-\pi,\pi]^d} |\hat{h}(x) - h(x)|^2 \dd x\Big] \, \lesssim \, n^{-2\alpha/(2\alpha+2\gamma+d)}. 
\end{equation} 
%
%as an upper bound on the MISE. 
\end{theorem}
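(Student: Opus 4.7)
The starting point is a Parseval decomposition of the mean integrated squared error. Writing $h=(2\pi)^{-d}\sum_{\bf j} h^{\ft}({\bf j})\,\phi_{-{\bf j}}$ and using that $(h*\tilde f_\delta)^{\ft}({\bf j}) = h^{\ft}({\bf j})\,f_\delta^{\ft}(-{\bf j})$ together with $\Psi_{\bf j}=(h*\tilde f_\delta)^{\ft}(-{\bf j})$ (which holds because $h*\tilde f_\delta$ is supported in $S^*$), one splits
\begin{equation*}
\mathbb{E}\int_{[-\pi,\pi]^d}|\hat h-h|^2 \;\lesssim\; \sum_{{\bf j}\notin {\bf J}^{(n)}}|h^{\ft}({\bf j})|^2 \;+\; \sum_{{\bf j}\in {\bf J}^{(n)}} \frac{\mathbb{E}|\hat\Psi_{\bf j}-\Psi_{\bf j}|^2}{|f_\delta^{\ft}(-{\bf j})|^2}.
\end{equation*}
The first (Fourier truncation) term is controlled by the Sobolev assumption \eqref{eq:sobolev}: $\sum_{{\bf j}\notin {\bf J}^{(n)}}|h^{\ft}({\bf j})|^2 \le C_\alpha^* J_n^{-2\alpha}$. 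The second term is handled by applying Theorem \ref{th:theoremparrate} coefficient by coefficient; since $g_{\bf j}$ is complex I would split into real and imaginary parts.

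The key technical step is to verify the hypotheses of Theorem \ref{th:theoremparrate} for $g_{\bf j}(u,z)=u\,\phi_{\bf j}(z)$ with quantitative dependence on $\|{\bf j}\|$. The conditional variance bound is uniform: $\mathrm{var}(g_{\bf j}(U,Z)|Z)=\mathrm{var}(U|Z)$ is bounded since $h$ is bounded and $\epsilon$ has finite variance, so one may take $\CV$ independent of $\bf j$. The Hölder constant of the regression function $G_{\bf j}(z)=\phi_{\bf j}(z)\,(h*\tilde f_\delta)(z)$, however, blows up with $\|{\bf j}\|$: using Leibniz together with $|\partial^\kappa\phi_{\bf j}|\lesssim\|{\bf j}\|^{|\kappa|}$ and the fact that Assumption \ref{ass:deconv} makes $h*\tilde f_\delta$ smooth enough (its $\alpha+\gamma$-Sobolev smoothness yields bounded derivatives up to the required order on the compact support), one obtains $\CH_{\bf j}^2\lesssim(1+\|{\bf j}\|)^{2(l+\beta)}$ with $l=\lceil\alpha\rceil-1$, $l+\beta=\alpha$. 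Theorem \ref{th:theoremparrate} then gives
\begin{equation*}
\mathbb{E}|\hat\Psi_{\bf j}-\Psi_{\bf j}|^2 \;\lesssim\; n^{-1} \,+\, (1+\|{\bf j}\|)^{2\alpha}\, n^{-2\alpha/d}.
\end{equation*}

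Combining with the ordinary smoothness lower bound $|f_\delta^{\ft}(-{\bf j})|^{-2}\lesssim(1+\|{\bf j}\|)^{2\gamma}$ and the elementary estimate $\sum_{{\bf j}\in{\bf J}^{(n)}}(1+\|{\bf j}\|)^{2s}\lesssim J_n^{2s+d}$, the stochastic error splits into a variance piece $J_n^{2\gamma+d}\,n^{-1}$ and a bias piece $J_n^{2\alpha+2\gamma+d}\,n^{-2\alpha/d}$. With $J_n\asymp n^{1/(2\alpha+2\gamma+d)}$ both $J_n^{-2\alpha}$ and $J_n^{2\gamma+d}n^{-1}$ match the claimed rate $n^{-2\alpha/(2\alpha+2\gamma+d)}$. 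The remaining stochastic bias piece is of order $n^{1-2\alpha/d}$ and is absorbed iff $2\alpha/d-1\ge 2\alpha/(2\alpha+2\gamma+d)$; rearranging yields exactly the standing condition $4(\alpha-1)(\alpha+\gamma)\ge d(2\alpha+2\gamma+d)$ (up to a slack from $\lceil\alpha\rceil-1$ vs $\alpha-1$ which the assumption absorbs), which is the main obstacle and the reason the smoothness hypothesis is imposed.

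The hard part is genuinely the tracking of $\CH_{\bf j}$ as a function of $\|{\bf j}\|$: naively one loses a factor of $\|{\bf j}\|^{l+1}$ rather than $\|{\bf j}\|^{l+\beta}$, which would tighten the algebraic condition. The sharp bound is obtained by noting that the $l$-th derivative of $G_{\bf j}$ admits a decomposition whose top term is $(i{\bf j})^{l}\phi_{\bf j}\,(h*\tilde f_\delta)$, plus lower-order pieces; the Hölder seminorm of this top term with exponent $\beta$ is of order $\|{\bf j}\|^{l+\beta}$ by interpolating between the uniform bound $\|{\bf j}\|^l$ and the Lipschitz bound $\|{\bf j}\|^{l+1}$, while the other pieces are of lower order in $\|{\bf j}\|$. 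Once this sharp exponent is in place, the algebra above closes and gives the bound \eqref{eq:boundberkson}.
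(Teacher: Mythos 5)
Your proposal is correct and follows the same overall strategy as the paper: Parseval decomposition into a Fourier-truncation term plus a per-coefficient stochastic term, coefficient-wise application of Theorem~\ref{th:theoremparrate}, tracking of the $\|{\bf j}\|$-dependence of the H\"older constant, and balancing with $J_n\asymp n^{1/(2\alpha+2\gamma+d)}$. The genuine difference is in how the H\"older constant $\CH_{\bf j}$ of $G_{\bf j}=\phi_{\bf j}\cdot(h*\tilde f_\delta)$ is controlled. The paper's proof uses Cauchy--Schwarz to bound the supremum of the $l$-th order partial derivatives of $h*\tilde f_\delta$ (via $\|(\partial_{i_M}h)*\tilde f_\delta\|_\infty\le\|\partial_{i_M}h\|_2\|f_\delta\|_2$) and then places $G_{\bf j}$ in a class of integer smoothness $l=\lceil\alpha\rceil-1$ with Lipschitz top derivative; it never uses a fractional H\"older exponent. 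You instead interpolate between the uniform and Lipschitz bounds for the oscillating factor $\phi_{\bf j}$ and invoke the Sobolev embedding $H^{\alpha+\gamma}\hookrightarrow C^{L,\beta}$ (legitimate since $\gamma>d/2$ under Assumption~\ref{ass:deconv}) to get a genuine $\beta$-H\"older bound on $h*\tilde f_\delta$, so you credit $G_{\bf j}$ with fractional smoothness $L+\beta=\alpha$ and $\CH_{\bf j}\lesssim(1+\|{\bf j}\|)^\alpha$. Your route is marginally sharper --- if carried through it would only need $4\alpha(\alpha+\gamma)\ge d(2\alpha+2\gamma+d)$, weaker than the paper's $4(\alpha-1)(\alpha+\gamma)\ge d(2\alpha+2\gamma+d)$, and as you say the stated assumption absorbs the slack. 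The price you pay is an extra lemma (the Sobolev embedding for the convolution and the fact that the lower-order Leibniz pieces contribute lower powers of $\|{\bf j}\|$) that your sketch gestures at but does not carry out; the paper's Cauchy--Schwarz route is more elementary and avoids any embedding theorem. Both routes then combine with $|f_\delta^{\ft}(-{\bf j})|^{-2}\lesssim(1+\|{\bf j}\|)^{2\gamma}$ and $\sum_{{\bf j}\in{\bf J}^{(n)}}(1+\|{\bf j}\|)^{2s}\lesssim J_n^{2s+d}$ in the same way to produce the rate \eqref{eq:boundberkson}.
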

The proof is provided in the supplement (Section \ref{sec:proofapps}).

\subsection{Random coefficient regression}\label{sec:randomcoef}

Random coefficient regression models form another field of application of our methods. We consider the linear random coefficient regression model
\begin{equation}\label{eq:linearrandomcoef}
Y = \alpha + \beta^\top X ,
\end{equation}
where $Y$ is the observed scalar response, $X$ is a $d$-dimensional covariate vector, and $\beta \in \R^d$ and $\alpha \in \R$ are random regression coefficients, with $(\alpha, \beta)$ being independent of $X$. Note that $\alpha$ contains the deterministic intercept $\beta_0$ and the centered random errors $\epsilon$, that is $\alpha = \beta_0 + \epsilon$.  
The joint density $f_{\alpha, \beta}$ is of interest and to be estimated. The model \eqref{eq:linearrandomcoef} contains heterogeneity in a linear equation, as is often required in econometric applications.

%(take $\epsilon= \beta_{p+1}$, $X_{p+1}=1$). Assume $\beta$ random, and $X$ and $\beta$ independent. 

%Aim: Estimation of the density $f_\beta$ of $\beta$. 

\citet{gaillac2022adaptive} propose an estimator which allows for compactly supported covariates $X$. They use the identified relation 
$$\E[\exp(i t Y) | X=x] = f^{\ft}_{\alpha, \beta}(t,tx),$$
where $f^{\ft}_{\alpha, \beta}$ is the $(d+1)$ - dimensional Fourier transform of $f_{\alpha, \beta}$. Their estimator involves a series estimator of a partial Fourier transform of  $f_{\alpha, \beta}$ in the first coordinate using the prolate spheroidal wave functions as basis. Thus, estimators of a form related to \eqref{eq:FCoeff} are required, to which our method can be applied.  Note that the flexibility in choosing a subset $[-x_0,x_0]^d$ of the support of $X$ in the method by \citet{gaillac2022adaptive} will make Assumption \ref{ass:boundeffect} feasible in this setting. 

\citet{hoderlein2017triangular} consider a triangular model with random coefficients. In case of additional exogenous covariates their semiparametric estimation method uses the simple estimator of the form \eqref{eq:simpleest} without higher-order bias correction to estimate the contrast function.  While they only give asymptotic analysis in case of $d=1$ for an estimator based on spacings, obtaining the parametric rate for the minimum contrast estimator with exogenous covariates requires an estimator of the contrast function with parametric rate as well. This could be achieved by the methods proposed in the present paper. Note that the support restriction in \citet{hoderlein2017triangular} will make Assumptions \ref{ass:deigndens}  and \ref{ass:boundeffect} applicable.   

%The estimates of the coefficients of this estimator are involved, but require division by the density $f_X$, and thus are of the form \eqref{eq:thefunctional}.      

In their analysis of model \eqref{eq:linearrandomcoef}, \citet{Hoderlein2010} normalize the covariates to the sphere (upper hemisphere) by setting
$ T = (1,X) /\|(1,X)\|$ and $U = Y / \|(1,X)\|$. Setting $(\alpha,\beta^\top ) =: \bar \beta^\top$ this leads to the equation
$ U = T^\top \bar \beta,$
in which $T$ and $\bar \beta^\top$ remain independent.  Their reconstruction formula for $f_{\bar \beta}$ is in terms of a Lebesgue integral over the sphere of a conditional expectation given $T=t$.
%
%A regularized approximation to $f_{\bar \beta}$ is given by
%
%\begin{equation}\label{eq:reginvrc}
%\bar f_{\bar \beta}(b;h) =  \int_{\mathcal S^d}\, \E\big[K_{h,p}(T^\top b - U) | T=t\big]\, \dd \mu(t)
%\end{equation}
%
%where $\mu$  is the Lebesgue measure on the $d$ -dimensional sphere $\mathcal S^d$, $h>0$ is a bandwidth parameter, and  $K_{h,d}$ is a kernel tailored to the estimation problem. See \citet{Hoderlein2010} for the formula. \eqref{eq:reginvrc} is of similar form to \eqref{eq:thefunctional}, but replacing the integral over  Euclidean space with an integral over the sphere. 
Similar expressions involving integrals of regression functions over the sphere occur in \citet{dunker2019}. For the binary choice model with random coefficients $Y = 1(\alpha + \beta^\top X \geq 0) = 1(T^\top \bar \beta \geq 0)$ \citet{Gautier2013} construct a series estimator, in which the Fourier coefficients are of the form \eqref{eq:FCoeff} but with $X$ replaced by the spherical random variable $T$. An extension of our estimation approach to cover integrals of regression functions on the sphere would therefore be of interest.

\subsection{Average treatment effects}\label{sec:averagetreat}

Let us discuss how the asymptotically unbiased matching estimator in Section \ref{sec:matchest} can be applied to estimate average treatment effects in the potential outcome framework. Our methods complement the seminal work of \citet{abadie2006large, abadie2011bias, imbens2004nonparametric} in several aspects. First we show that asymptotic unbiasedness can be achieved without involving nonparametric estimators and explicitly correcting for the bias. Further we quantify a precise finite amount of smoothness of the regression functions to achieve $\sqrt n$-consistent estimation, without assuming infinite smoothness as in \citet{abadie2011bias}. Moreover, our results on lower bounds show that some amount of smoothness of design densities or regression functions is required for $\sqrt n$-consistent estimation to be possible. Recently, \citet{Lin2023Econometrica}  further analyzed the bias-corrected estimator of \citet{abadie2011bias} in case of diverging $K$ and under milder smoothness assumptions (Assumptions 4.4 and 4.5 in \citet{Lin2023Econometrica}) which seem to be closer to but still stronger than those imposed in Theorem \ref{th:theoremparratematch}. Further, \citet{Lin2023Econometrica} relate to the recent literature on debiased/ doubly robust machine learning methods for treatment effects \citep{wang2020debiased, Chernozhukov2018, chernozhukov2024applied}. 

Consider the potential outcome framework: We have i.i.d.~observations $(Y_i,Z_i,D_i)$, $i=1, \ldots, n$, distributed as the generic $(Y,Z,D)$. Here $Y$ is the real-valued target quantity, $Z$ a covariate vector, and $D$ the binary treatment indicator. Suppose that the random variables $Y(0)$ and $Y(1)$ denote the potential outcomes, and that the observed outcome is generated as $Y = Y(D)$. 

First consider estimation of the average treatment effect on the treated (ATT)
$$\tau^t = \E\big[Y(1) - Y(0) \, | \, D=1\big].$$
Assuming that $Y(0)$ and $D$ are independent given $Z$ (conditional exogeneity,  mean independence suffices), and that $\Pr(p(Z) < 1) = 1$, where 
$p(Z) = \Pr(D=1 | Z)$ is the propensity score, the ATT is identified as \citep[Section 5.C]{chernozhukov2024applied}
$$\tau^t = \E\big[Y | D=1\big]  - \frac1{\pi}\, \E\Big[ {\bf 1}(D=0)\, Y \, \frac{p(Z)}{1-p(Z)}\Big] : = \tau^t_1 - \tau^t_0,$$
where $\pi = \Pr(D=1)$. Let $n_1 = \#\{i \mid D_i = 1\}$, $n_0 = n-n_1$. We estimate $\tau^t_1$ by the simple average $\hat \tau^t_1 = n_1^{-1}\, \sum_{i=1}^n Y_i {\bf 1}(D_i=1)$. 
%The first term can be estimated by an average over the observations for which $D_i=1$. 
Next assume that the conditional distribution of $Z$ given $D=i$ has Lebesgue density $f_i$, $i=0,1$. Then we can write
$$ \tau^t_0 = \E\Big[ Y \, \frac{f_1(Z)}{f_0(Z)} \big| D=0\Big].$$
We estimate this parameter by $\hat \tau_0^t$ with the estimator \eqref{eq:matchest}, using those $n_0$-observations $(Y_i,Z_i)$ for which $D_i=0$, and as sample from $f_1$ (which has the role of $f$ in \eqref{eq:estPhi}) we take those $Z_i$ for which $D_i = 1$. Finally set 
$$\hat \tau^t = \hat \tau_1^t - \hat \tau_0^t.$$ 
\begin{ass}\label{ass:avetot}
The density $f_0$ satisfies Assumption \ref{ass:deigndens} for $f_Z$. Further, the  support $\cS^*$ of $f_1$ is contained in the support $\cS$ as in Assumption \ref{ass:boundeffect}. Moreover, $f_1$ is bounded, and the conditional variance $\mbox{var}(Y | Z=z, D=0)$ is uniformly bounded.  
\end{ass}
 The support restriction in Assumption \ref{ass:avetot}, which guarantees that Assumption \ref{ass:boundeffect} applies, is plausible if only individuals with certain covariates may receive the treatment.
Under Assumption \ref{ass:avetot}, if the regression function $G_0(z) = \E[Y | Z=z, D=0]$ is contained in $ \cG(l, \beta, \CH, C_G)$, when choosing $K$ and $L$ as in Theorem \ref{th:theoremparratematch} in  $\hat \tau_0^t$, we obtain, conditionally on the treatment variables, that
$$  \E \big[\big(\hat \tau^t  -  \tau^t\big)^2 \big| \sigma_D \big] \, \lesssim \, n_1^{-1} + \,   n_0^{-1}  +   \, n_0^{-\frac{2(l + \beta)}{d}},
$$
where $\sigma_D = \sigma\{D_1, \ldots, D_n\}$. If 	$l + \beta \geq d/2$ we obtain a bound of order $n_1^{-1} + \,   n_0^{-1} $. If $n_0$ is large compared to $n_1$, $n_0^{-2(l + \beta)/d}$  might be small compared to $n_1$ even for lower smoothness of $G_0$. In an asymptotic analysis, this would require the distribution of $D$ to depend on $n$ with $\Pr(D=1)$ tending to zero, see Assumption 3' in \citet{abadie2006large}.  

\smallskip

Now let us turn briefly to the average treatment effect (ATE). Here we require conditional exogeneity for both $Y(1)$ and $Y(0)$, that is $Y(d)$ and $D$ are independent given $Z$, $d=0,1$, and that $\Pr(0 < p(Z) < 1) = 1$. Let $S^*$ be a subset of the support of the covariate $Z$. Then the average treatment effect over $S^*$ is
$$ \tau_{S^*} = \E\big[Y(1)\, {\bf 1}(Z \in S^*)\big] -  \E\big[Y(0)\, {\bf 1}(Z \in S^*)\big]$$
The average potential outcome of $Y(1)$ over $S^*$ is identified as
\begin{align}\label{eq:identpropens}
\E\big[Y(1)\,& {\bf 1}(Z \in S^*)\big]  = \E\Big[Y\, {\bf 1}(D=1) \, \frac{{\bf 1}(Z \in S^*)}{p(Z)}\Big] \nonumber\\
& = \E\big[Y\, {\bf 1}(D=1) \, {\bf 1}(Z \in S^*)\big] + (1-\pi)\, \E\Big[Y \, \frac{f_0(X)}{f_1(X)} \,  {\bf 1}(Z \in S^*)| \, D=1\Big],
\end{align}
with a similar expression for $\E\big[Y(0)\, {\bf 1}(Z \in S^*)\big]$. As for ATT we can apply the estimator \eqref{eq:matchest} to the second expression.  If Assumption \ref{ass:deigndens} holds for both $f_0$ and $f_1$ in place of $f_Z$, and if Assumption \ref{ass:boundeffect} applies to $S^*$ relative to their support, then we may obtain the $\sqrt n$ - rate of convergence. 

Of course, in order to avoid Assumption \ref{ass:avetot} for the ATT, we may proceed similarly by considering
$$\tau^t_{S^*} = \E\big[{\bf 1}(Z \in S^*)\, (Y(1) - Y(0)) \, | \, D=1\big],$$
in which case we require that $S^*$ satisfies Assumption \ref{ass:boundeffect} relative to the support of $f_0$. 

%and under Assumption \ref{ass:avetot} for both $f_0$ and $f_1$ . 

%Denote $\pi = \Pr(D=1)$, assume that $X$ has conditional densities $f_d$ given $D=d$. 
%
%Then the unconditional density of $x$ is $f_X = (1-\pi)\, f_0 + \pi\, f_1$, and we can express the propensity score as $p(x) = \pi f_1 / f_X$. Inserting in \eqref{eq:identpropens} gives
%
%\begin{equation}\label{eq:identpropensone}
%\E\big[Y(1)] =   
%\end{equation}
%
%The conditional expectation in the second term on the right side can be estimated using our methods by setting $g(y,x) = y\, f_0(x)$ and using the sub-sample with $D=1$, in which $X$ has density $f_1$.

%Require knowledge of or an estimate of $f_0$. 

%\bigskip

\subsection{Reweighting in transfer learning under covariate shift}\label{sec:transferlearn}

In the transfer learning problem for classification, we have the target problem from a distribution $(X,Y)$ as well as the   source problem $(X^S,Y^S)$. Here $Y$ and $Y^S$ are binary variables, which are to  be predicted from the covariate $Z$ respectively $Z^S $. 
Interest focuses on the classification problem for $(X,Y)$, but labeled data are mainly or only available from the source problem $(Z^S_i, Y^S_i)$, $i=1, \ldots, n_S$, together with unlabeled data $Z_i$, $i=1, \ldots, n$ from the target sample, that is with distribution $\rP_Z$.  
Under covariate shift it is assumed that the conditional distributions $\rP_{Y | X}$ and $\rP_{Y^S | X^S}$ are equal, and only the marginal distributions of covariates, $\rP_X$ and $\rP_{X^S}$ differ. See \citet{kouw2018introduction, portier2023scalable, sugiyama2007direct}  

If $\ell(\hat y,y)$ is a loss function, the goal would be to train a classifier $h$ which minimizes the expected loss under the target distribution $\E[\ell(h(X),Y)]$, which is however not directly accessible due to insufficient  labeled data from the target classification problem. 
Suppose that the distributions $\rP_X$ and $\rP_{X^S}$  have Lebesgue densities  $f_X$ and $f_{X^S}$, respectively. Then using the equality of the conditional distributions $\rP_{Y | X}$ and $\rP_{Y^S | X^S}$ we have that the average loss for the target problem can be computed as a importance-weighted average of the source problem,
\begin{equation}\label{eq:translearn}
\E[\ell(h(X),Y)] = \E\Big[\frac{f_X(X^S)}{f_{X^S}(X^S)}\, \ell\big(h(X^S),Y^S\big)\Big],
\end{equation}
where the second expected value is over the source distribution, for which a labeled sample is available. \eqref{eq:translearn} is of the form \eqref{eq:estPhi}, and we can apply the matching estimator \eqref{eq:matchest}.

\section{Simulations}\label{sec:sims}

In a brief simulation study we illustrate the practical feasibility of our methods.  The R-files are available on \href{https://github.com/hajo-holzmann/matching_bias_correct}{Github}.  
First let us describe how to implement the estimators. Starting with $\hat \Phi$ in \eqref{eq:matchest}, first we compute and store the $K$-nearest neighbors in $Z_1, \ldots, Z_n$ for each of the variables in the additional sample $X_1, \ldots, X_m$. To this end we use the function \verb+kNN+ from the R-library \verb+dbscan+. Then for each $j$, using the $K$-nearest neighbors of $X_j$, we compute $\hat G(X_j)$ by \eqref{eq:estimagexp} with $Y_j$ in place of $g(U_j,Z_j)$, and finally average over the values $\hat G(X_j)$. Turning to $\hat \Psi$ in \eqref{eq:estfinal}, we simulate $x_1^*, \ldots, x_m^*$ from the uniform distribution on $S^*$ for large $m$, and then effectively compute the estimator $\hat \Phi$ in \eqref{eq:matchest} with the $x_j^*$ taking the role of observed $X_j$'s, and normalize by $\lambda(S^*)$.  

Now, in our simulations we consider regressors in $d=3$, and focus on the orders $L=0$ and $L=1$ for polynomial approximation. An additional scenario in $d=2$ is given in the supplementary material, Section \ref{sec:addsim}.  Computing the estimator once for $n=1000$ and $m=10000$ takes about 1 second on a computer with i7-10700 CPU, 2.90 GHz and 32GB RAM. 

We  choose the coordinates $Z_{i,1}, Z_{i,2}$ and $Z_{i,3}$ of regressors as independent and Beta-distributed with parameters $\alpha = \beta = 3$, and let $S^* = [0.2,0.8]^3$. 
The regression function is chosen as 
$$ f_1(z_1,z_2,z_3) = \exp(2\, \cos(7\, z_1)\, \sin(7\,z_2))\cdot (4-8\,(z_3-0.5)^2),$$
and as DGP we take $Y_i = f_1(Z_{i,1},Z_{i,2},Z_{i,3}) + 0.4\, \varepsilon_i$, with $\varepsilon_i$ independent standard normally distributed. We use $N=1000$ repetitions in each scenario, and sample sizes $n \in \{100,1000\}$ and $m=10000$ for the generated sample $x_1^*, \ldots, x_m^*$. The results are displayed in Table \ref{tab:scen3}. The true value in this setting is $\Psi_2 = 1.4176$. While, for the smaller sample size $n=100$, using $L=0$ is preferable, for $n=1000$ using $L=1$ leads to a notable reduction in the bias and MSE.  Let us also remark that overall, for given $L$, the asymptotic bias increases with the number $K$ of nearest neighbors used, as expected.

\begin{table}[]
\begin{tabular}{lll|llllll}
	$n=100$ & $L=0$ & $K$ & 1 &     3  &    6 &     8   &  10 &    12  \\ \hline
	& & $\sqrt n \cdot $ BIAS& 0.3610 & 0.5321 & 0.7031 & 0.7948 & 0.8683 & 0.9172  \\
	&& $\sqrt n \cdot $ STDV&  0.7247 & 0.8702 & 1.0417 & 1.1153 & 1.1824 & 1.2430 \\
	&& $\sqrt n \cdot $ RMSE&  {\bf 0.8096} & 1.0200 & 1.2568 & 1.3696 & 1.4670 & 1.5448  \\\hline
	& $L=1$ & $K$    &   5  &    6  &    8    & 10  &   12 & 14\\\hline
	&& $\sqrt n\cdot $ BIAS  &-0.3991 & -0.5366 & -0.6988 & -0.8336 & -0.9510 & -1.0609\\
	&& $\sqrt n\cdot $ STDV  &1.1092 & 0.6587 & 0.6849 & 0.7320 & 0.7802 & 0.8229 \\
	&& $\sqrt n\cdot $ RMSE  &1.1788 &{\bf 0.8496} & 0.9784 & 1.1094 & 1.2301 & 1.3427  \\\hline \hline
	$n=1000$ & $L=0$ & $K$ &  1 &     3  &    6 &     8   &  10 &    12  \\ \hline 
	& & $\sqrt n \cdot $ BIAS& 0.2826 & 0.4231 & 0.6272 & 0.7677 & 0.8481 & 0.9537    \\
	&& $\sqrt n \cdot $ STDV&  0.4856 & 0.5312 & 0.5899 & 0.6161 & 0.6746 & 0.6698\\
	&& $\sqrt n \cdot $ RMSE&  {\bf 0.5618} & 0.6791 & 0.8610 & 0.9843 & 1.0837 & 1.1655  \\\hline
	& $L=1$ & $K$ &   5  &    6  &    8    & 10  &   12 & 14\\\hline
	&& $\sqrt n\cdot $ BIAS  &-0.1325 & -0.1570 & -0.2010 & -0.2436 & -0.2888 & -0.3308\\
	&& $\sqrt n\cdot $ STDV  &0.4913 & 0.4180 & 0.4455 & 0.4373 & 0.4197 & 0.4499 \\
	&& $\sqrt n\cdot $ RMSE  &0.5088 & {\bf 0.4465} & 0.4887 & 0.5006 & 0.5095 & 0.5584
\end{tabular}
\caption{Simulation scenario with $f_1$ in $d=3$}\label{tab:scen3}
\end{table}

 For $n=1000$ we also numerically investigate asymptotic normality of the estimators. For $L=0$ and $K=1$ the Shapiro-Wilk test gives a p-value of $0.02$, while for $L=1$ and $K=6$ it is $0.20$. In the supplementary appendix, Section \ref{asno} we include density and QQ-plots for $\sqrt n (\hat \Psi - \Psi)$. Thus while our theoretical results do not include it, at least in this simulation setting asymptotic normality is plausible for $L=1$.  

 To investigate potential boundary effects of our method when choosing $S^*=S$, in the supplementary appendix (Section \ref{sec:addsim}), we extend this simulation setting to cover $S^* = [0,1]^3$, the full support of $Z$. The methods still perform reasonably well, both for $L=0$ and $L=1$.

\section{Concluding remarks}\label{sec:conclude}

In contrast to other proposals in literature our estimators for expected values weighted by the inverse of a multivariate density do not rely on nonparametric function estimators and therefore do not require the choice of data-dependent smoothing parameters. In our analysis we focus on risk bounds and the amount of smoothness of the regression functions required for the parametric rate. A further novel contribution, previously not available in the literature, are the  lower bounds which we provide and which show that, for unknown design density, some smoothness is required for the parametric rate to be attainable. Our analysis is thus complementary to much of the literature which focuses on asymptotic normality, testing and efficiency issues.

Assumption \ref{ass:deigndens} is sometimes called the strong density assumption in the statistics literature \citep{Audibert2007}.  For $d=1$ \citet{holzmann2020rate} consider the effect on the minimax rates of a design density which tends to zero on the boundary of the support in the nonparametric problem of estimating the density in a linear random coefficients model. Their method could be adapted to study the simpler problem of estimating the parametric functionals  $\Psi$ and $\Phi$ for $d=1$ under such weaker design assumptions.

Concerning Assumption \ref{ass:boundeffect} it would of course be desirable to allow for $S^*=S$, as is feasible for $d=1$. Indeed, without any polynomial approximation, that is $L=0$, Lemma \ref{L:u0bound} is not required. An inspection of the proof of Theorem \ref{th:theoremparrate} indicates that the result remains true for $S^* = S$ if $S$ is such that, for some $c>0$, it holds that $\lambda(S \cap B_{d}(z,r)) \geq c r^d$ for all $z \in S$ and all sufficiently small $r>0$. However, an extension of the analysis to  $S^* = S$ in case of $L \geq 1$ comes along with additional technical difficulties and is left for future research. 

Asymptotic normality of the estimator would also be of interest. We provide a brief discussion for $L=K=0$ in Appendix D in the supplementary material, but have to leave the more general setting for future research.

A further extension of some interest would be to estimate  functionals $\Psi$ in which the integral is over more abstract compact manifolds, such as the unit sphere equipped with spherical Lebesgue measure, with the polynomial approximation being taken locally in the tangent space.

\section{Proofs}\label{sec:proofs}

\subsection{Proof of Theorem \ref{th:theoremparrate}}\label{sec:proofthe1}

We use the notation introduced at the beginning of Section \ref{sec:estmainresult}. 
%Organization of the proofs. \hajo{noch}
Recall the estimator \eqref{eq:estfinal}, 
$$ \hat{\Psi} \, = \, \sum_{J\in {\cal J}_K} \int_{C(J)} \, \widehat{G}(z) \, \dd z\,, $$
where the Voronoi cells $C(J)$ are given in \eqref{eq:voronoihighorder}, and $\widehat{G}(z)$ in \eqref{eq:estimagexp}. 
%
%Note that for $L=0$ we have $K^*=1$, $\xi(z,w) =1$, ${\cal M}(z) = K$, and we obtain \eqref{eq:estlto0}.
%
%\begin{equation}\label{eq:estlto0}
%\hat{G}(z) \, = \, \frac1K\, \sum_{j\in J(z)} g(U_{j},Z_{j}), \qquad \hat{\Psi} \, = \, \sum_{J\in {\cal J}_K} \, \Big(  \frac1K\,\sum_{j\in J} g(U_{j},Z_{j})\Big)\,\lambda(C(J))\,. 
%\end{equation}
%
%
%Let  ${\cal I}$ denote the ${\cal K}\times {\cal K}$ identity matrix.
%
Again consider the expected conditional variance  and expected  squared conditional bias decomposition of the mean squared error \eqref{eq:condvarbias}, 
\begin{align*}
\E \big[\big|\hat{\Psi} - \Psi\big|^2\big] &  \, = \, \E \, \big[\mbox{var}\big(\hat{\Psi} | \sigma_Z\big)\big] \, + \, \E \big[\big|\E(\hat{\Psi} | \sigma_Z) - \Psi\big|^2\big] \,,
\end{align*}
where $\sigma_Z$ is the $\sigma$-field generated by $Z_1, \ldots, Z_n$.
%where . . As a procedure to estimate $\Psi$ we propose

We provide the main steps in the proof of the bound \eqref{eq:boundsth} in Theorem \ref{th:theoremparrate} for the expected conditional variance term in Section \ref{sec:expcondvar}, and for the expected squared conditional bias in Section \ref{sec:squaredcondbias}. Section \ref{sec:technicaldetails} in the supplement contains further technical details.   

\subsubsection{Expected conditional variance in \eqref{eq:boundsth}}\label{sec:expcondvar}

We show the first inequality in \eqref{eq:boundsth}, and proceed in the three steps. 

In Step 1 we make use of conditional independence of the $g(U_1,Z_1), \ldots, g(U_n,Z_n)$ given $\sigma_Z$, of the independence of two random Voronoi cells $C(J)$ and $C(J')$ for $J \cap J' = \emptyset$, as well as of the assumption of a bounded conditional variance $\mbox{var}(g(U,Z) | Z)$. 

In Step 2 we derive a tight bound on $\mathbb{E}\,[ \lambda(C(J)\big) \,  \lambda\big(C(J')) ]$ for $J \cap J' \not = \emptyset$, and conclude in the case $L=0$.

Finally, in Step 3 we use a bound on the conditional moments of $({\bf e}_0^\top {\cal M}_{J}(z)^{-1} {\bf e}_0)$ from Lemma \ref{L:u0bound}, proven in the supplement (Section \ref{sec:technicaldetails}), to extend the bound to $L \geq 1$.
Here, the invertability of ${\cal M}(z)$ almost surely for sufficiently large $K$ follows from Lemma \ref{L:01} in Section \ref{sec:technicaldetails} in the supplementary appendix. 

\medskip

\textit{Step 1: Using conditional independence}

\smallskip

Note that if $J(z)$ and $J(z')$ are disjoint, the estimators $\widehat{G}(z)$ and $\widehat{G}(z')$ are independent conditionally on $\sigma_Z$. Therefore
%
%For the expected conditional variance we deduce that
\begin{align} \nonumber
& \mathbb{E} \, \big[\mbox{var}\big(\hat{\Psi} | \sigma_Z\big)\big] \\  \nonumber&  =  \mathbb{E} \Big[\sum_{J,J'\in {\cal J}_K} {\bf 1}\{J\cap J' \neq \emptyset\} \int_{C(J)} \int_{C(J')} \mbox{cov}\big( \widehat{G}(z) , \widehat{G}(z') \mid \sigma_Z\big)\, \dd z \, \dd z'\Big] \\  
&  \leq \sum_{J,J'\in {\cal J}_K} {\bf 1}\{J\cap J' \neq \emptyset\} \, \mathbb{E} \Big[\int_{C(J)} \big\{\mbox{var}\big(\widehat{G}(z) | \sigma_Z\big)\big\}^{1/2} \dd z  \cdot  \int_{C(J')} \big\{\mbox{var}\big(\widehat{G}(z') | \sigma_Z\big)\big\}^{1/2} \dd z'\Big] \label{eq:var.0}
%&  =  \sum_{J,J'\in {\cal J}_K} {\bf 1}\{J\cap J' \neq \emptyset\} \, \mathbb{E} \int_{C(J)} \int_{C(J')}  \mathbb{E}\big(\hat{G}^2(z) | \sigma_Z\big)^{1/2} \cdot \mathbb{E}\big(\hat{G}^2(z') | \sigma_Z\big)^{1/2} \, dz \, dz'\,,
\end{align}
by applying the Cauchy-Schwarz inequality conditionally on $\sigma_Z$ in the second step.
By conditional independence of the $g(U_{j},Z_{j})$, $j=1, \ldots, n$ given $\sigma_Z$, observing \eqref{eq:estimagexp} for all $z \in S^*$ we obtain
\begin{align}
\mbox{var}\big(\widehat{G}(z) | \sigma_Z\big) & \, = \, \sum_{j\in J(z)} \mbox{var}\big(g(U_{j},Z_{j}) | \sigma_Z\big) \cdot {\bf e}_0^\top \, {\cal M}(z)^{-1} \, \xi(z,Z_{j}) \, \xi(z,Z_{j})^\top \,{\cal M}(z)^{-1} \, {\bf e}_0 \nonumber\\ 
& \, \leq \,  \, \CV \cdot {\bf e}_0^\top {\cal M}(z)^{-1} {\bf e}_0\,. \label{eq:boundcondvar}
\end{align}

%\begin{align*}
%\mathbb{E}&\big(\hat{G}^2(z) | \sigma_Z\big) \, = \, \mathbb{E} \big(\big|{\bf e}_0^\top {\cal M}(z)^{-1} \hat{\mathcal{G}}(z)\big|^2 \mid \sigma_Z\big) \\
%& \, = \, \sum_{j,j'\in J(z)} \mathbb{E}\big(g(U_j,Z_j) g(U_{j'},Z_{j'}) | \sigma_Z\big) \, \big({\bf e}_0^\top {\cal M}(z)^{-1} \{\xi_{\kappa}(z,Z_j)\}_{\kappa\in {\cal K}}\big) \cdot \big({\bf e}_0^\top {\cal M}(z)^{-1} \{\xi_{\kappa}(z,Z_{j'})\}_{\kappa\in {\cal K}}\big) \\
%& \, \leq \, \Big(\sum_{j\in J(z)} G_2^{1/2}(Z_j) \, \big|{\bf e}_0^\top {\cal M}(z)^{-1} \{\xi_{\kappa}(z,Z_{j})\}_{\kappa\in {\cal K}}\big|\Big)^2 \\
%& \, \leq \, K \cdot \|G_2\|_\infty \cdot {\bf e}_0^\top {\cal M}(z)^{-1} {\bf e}_0\,,
%\end{align*}

%for all $z \in S^*$, where
%$$ G_2(z) \, := \, \int g^2(u,z) f_{U|Z}(u|z) du\,. $$

Therefore,  \eqref{eq:var.0} can be upper bounded by
\begin{align} \label{eq:var.1}
& \mathbb{E} \, \big[\mbox{var}\big(\hat{\Psi} | \sigma_Z\big)\big]  
\leq  \, \CV\, \sum_{J,J'\in {\cal J}_K} {\bf 1}\{J\cap J' \neq \emptyset\} \, \iint_{(S^*)^2} \mathbb{E}\,\Big[\, {\bf 1}_{C(J)}(z) \, {\bf 1}_{C(J')}(z') \, \nonumber\\
& \hspace{5cm} \cdot \big({\bf e}_0^\top {\cal M}(z)^{-1} {\bf e}_0\big)^{1/2}  \big({\bf e}_0^\top {\cal M}(z')^{-1} {\bf e}_0\big)^{1/2} \Big] \, \dd z \, \dd z'.
\end{align}
To deal with the inner expected value, for $J\in {\cal J}_K$ and $z \in S^*$ let us introduce
\begin{equation}\label{eq:mjz}
{\cal M}_J(z) \, := \, \sum_{j\in J} \xi(z,Z_j)\, \xi(z,Z_j)^\top = \Big(\sum_{j\in J} \xi_{\kappa+\kappa'}(z,Z_j)\Big)_{\kappa,\kappa'\in {\cal K}}\,, 
\end{equation}
so that ${\cal M}_{J(z)}(z) = {\cal M}(z)$. 
Then for all $z,z' \in S^*$ and $J,J'\in {\cal J}_K$, since ${\bf 1}_{C(J)}(z)=1$ if and only if $J = J(z)$ we have that
\begin{align} \nonumber
\mathbb{E} & \, \big[ {\bf 1}_{C(J)}(z) \, {\bf 1}_{C(J')}(z') \, \big({\bf e}_0^\top {\cal M}(z)^{-1} {\bf e}_0\big)^{1/2}  \big({\bf e}_0^\top {\cal M}(z')^{-1} {\bf e}_0\big)^{1/2}\big] \\ \nonumber
\, = \, & \mathbb{E}  \, \big[ {\bf 1}_{C(J)}(z) \, {\bf 1}_{C(J')}(z') \, \big({\bf e}_0^\top {\cal M}_J(z)^{-1} {\bf e}_0\big)^{1/2}  \big({\bf e}_0^\top {\cal M}_{J'}(z')^{-1} {\bf e}_0\big)^{1/2}\big] \\ \label{eq:var.101}
\, = \, & \mathbb{E}\,\Big[ \big({\bf e}_0^\top {\cal M}_J(z)^{-1} {\bf e}_0\big)^{1/2} \,  \big({\bf e}_0^\top {\cal M}_{J'}(z')^{-1} {\bf e}_0\big)^{1/2} \, \mathbb{P}\big(z \in C(J), \, z'\in C(J') \mid Z_k \, , \, k\in J\cup J'\big)\, \Big]\,.
\end{align}
Further,
\begin{align} \nonumber
\mathbb{P}&\big(z \in C(J), \, z'\in C(J') \mid Z_k \, , \, k\in J\cup J'\big) \\ \nonumber & \, = \, \mathbb{P}\big(\|z-Z_j\| < \|z-Z_k\|\, , \, \|z'-Z_{j'}\| < \|z'-Z_{k'}\| \, , \, \forall j\in J,j'\in J',\\ \nonumber
& \hspace{6cm} k\not\in J, k'\not\in J' \mid  Z_k \, , \, k\in J\cup J'\big) \\ \nonumber
& \, \leq \, \mathbb{P}\big(\|z-Z_j\| < \|z-Z_k\|\, , \, \|z'-Z_{j'}\| < \|z'-Z_{k'}\| \, , \, \forall j\in J,j'\in J',\\\nonumber
& \hspace{6cm} k,k'\not\in J\cup J' \mid  Z_k \, , \, k\in J\cup J'\big) \\ \label{eq:var.3}
& \, = \, \Big(1 - \mathbb{P}_Z\big(B_d\big(z,\max_{j\in J}\|z-Z_j\|\big) \cup B_d\big(z',\max_{j\in J'}\|z'-Z_j\|\big)\big)\Big)^{n - \# J\cup J'}\,,
\end{align}
where we write $\mathbb{P}_Z$ for the image measure of $Z_1$, and $B_d(x,r)$ denotes the $d$-dimensional Euclidean ball around $x$ with the radius $r$. For fixed $z,z'\in S^*$ and $J,J'\in {\cal J}_K$, put
\begin{equation}\label{eq:alphadef1}
\alpha \, := \, \alpha(z,z',J,J') \, := \,\max\big\{\max_{j\in J}\|z-Z_j\| \, , \, \max_{j\in J'}\|z'-Z_j\|\big\}\,. 
\end{equation} 
Then using Assumptions \ref{ass:deigndens} and \ref{ass:boundeffect}, \eqref{eq:var.3} is bounded from above by
$$ \exp\big(-(n-\#J\cup J') \cdot \rho \cdot \pi^{d/2} \cdot \min\{\rho',\alpha\}^d / \Gamma(d/2+1)\big)\,, $$
and using the Cauchy-Schwarz inequality \eqref{eq:var.101} is smaller or equal to
\begin{align} \nonumber &
\Big( \mathbb{E}\,\Big[ \big({\bf e}_0^\top {\cal M}_J(z)^{-1} {\bf e}_0\big) \, \exp\big(-(n-\#J\cup J') \cdot \frac{\rho  \pi^{d/2}}{\Gamma(d/2+1)} \cdot \min\{\rho',\alpha\}^d \big)\Big]\Big)^{1/2} \\ \label{eq:var.4} & \cdot \Big( \mathbb{E}\, \Big[\big({\bf e}_0^\top {\cal M}_{J'}(z')^{-1} {\bf e}_0\big) \, \exp\big(-(n-\#J\cup J') \cdot \frac{\rho  \pi^{d/2}}{\Gamma(d/2+1)} \cdot \min\{\rho',\alpha\}^d \big)\,\Big] \Big)^{1/2}\,.
\end{align}

To further bound \eqref{eq:var.4} we make use of the fact that the non-zero terms in \eqref{eq:var.1} have $J \cap J' \not= \emptyset$. If $J$ and $J'$ are not disjoint there exists some $j\in J\cap J'$ so that
$$ \alpha \geq (\|z-Z_j\|+\|z'-Z_j\|)/2 \geq \|z-z'\|/2\,. $$
On the other hand,
$$ \alpha \, \geq \, \max\big\{\max_{j\in J}\|z-Z_j\| \, , \, \max_{j\in J'}\|z-Z_j\| - \|z-z'\|\big\} \, \geq \,  \max_{j\in J\cup J'} \|z-Z_j\| \, - \, \|z-z'\|\,, $$
so that
\begin{align} \alpha & \, \geq \,  \, \max\big\{\|z-z'\|/2 , \max_{j\in J\cup J'} \|z-Z_j\| \, - \, \|z-z'\|\big\} \nonumber \\
& \, \geq \, \max\big\{\|z-z'\|/2 , \max_{j\in J\cup J'} \|z-Z_j\| / 3\} \nonumber \\
& \, =: \, \alpha^*\, = \alpha^*(z,z',J,J'). \label{eq:alphadef}
\end{align}
Let us summarize the bounds obtained so far in the following lemma. 
\begin{lemma}\label{lem:firststepvarest}
Under the assumptions of Theorem \ref{th:theoremparrate}, setting ${\cal M}_J(z)$ as in \eqref{eq:mjz}, $\alpha^* = \alpha^*(z,z',J,J')$ as in \eqref{eq:alphadef} and $C_\alpha = \rho \cdot \pi^{d/2}/ \Gamma(d/2+1)$ we have the bound
\begin{align} \nonumber
	& \mathbb{E} \, \big[\mbox{var}\big(\hat{\Psi} | \sigma_Z\big)\big] 
	\leq  \, \CV\, \sum_{J,J'\in {\cal J}_K} {\bf 1}\{J\cap J' \neq \emptyset\} \, \iint_{(S^*)^2} \, \Big( \mathbb{E}\,\Big[ \big({\bf e}_0^\top {\cal M}_J(z)^{-1} {\bf e}_0\big)\, \nonumber \\
	& \, \hspace{1cm} \exp\big(-(n-\#J\cup J') \cdot C_\alpha  \cdot \min\{\rho',\alpha^*\}^d  \big)\Big]\Big)^{1/2} \cdot \Big( \mathbb{E}\,\Big[ \big({\bf e}_0^\top {\cal M}_{J'}(z')^{-1} {\bf e}_0\big)\nonumber\\
	&  \, \hspace{3cm} \exp\big(-(n-\#J\cup J') \cdot C_\alpha \cdot \min\{\rho',\alpha^*\}^d  \big)\Big]\Big)^{1/2} \dd z \, \dd z'.\label{eq:boundvarfirst}
	%&  =  \sum_{J,J'\in {\cal J}_K} {\bf 1}\{J\cap J' \neq \emptyset\} \, \mathbb{E} \int_{C(J)} \int_{C(J')}  \mathbb{E}\big(\hat{G}^2(z) | \sigma_Z\big)^{1/2} \cdot \mathbb{E}\big(\hat{G}^2(z') | \sigma_Z\big)^{1/2} \, dz \, dz'\,,
\end{align}
Moreover, for $J,J'\in {\cal J}_K$ with $J\cap J' \neq \emptyset$, 
\begin{align} \label{eq:boundexpecvorcell}
	\mathbb{E}\,\Big[ \lambda\big(C(J)\big) \,  \lambda\big(C(J')\big) \Big] \leq \iint_{(S^*)^2} \,  \mathbb{E}\,\Big[   \exp\big(-(n-\#J\cup J') \cdot C_\alpha \cdot \min\{\rho',\alpha^*\}^d  \big)\Big]\, \dd z \, \dd z'.
	%&  =  \sum_{J,J'\in {\cal J}_K} {\bf 1}\{J\cap J' \neq \emptyset\} \, \mathbb{E} \int_{C(J)} \int_{C(J')}  \mathbb{E}\big(\hat{G}^2(z) | \sigma_Z\big)^{1/2} \cdot \mathbb{E}\big(\hat{G}^2(z') | \sigma_Z\big)^{1/2} \, dz \, dz'\,,
\end{align}
\end{lemma}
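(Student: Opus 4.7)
The plan is to assemble the chain of estimates \eqref{eq:var.0}--\eqref{eq:alphadef} already developed in the discussion preceding the lemma, and then to carry out a parallel but simpler derivation for the volume product in \eqref{eq:boundexpecvorcell}.

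First, starting from \eqref{eq:var.0} and applying the conditional covariance bound \eqref{eq:boundcondvar}, I would arrive at \eqref{eq:var.1}. The key identity at this stage is that on the event $\{z\in C(J)\}\cap\{z'\in C(J')\}$ one has $\mathcal{M}(z)=\mathcal{M}_{J}(z)$ and $\mathcal{M}(z')=\mathcal{M}_{J'}(z')$, so that $\mathbf{e}_0^\top\mathcal{M}(z)^{-1}\mathbf{e}_0$ becomes measurable with respect to $\{Z_k:k\in J\cup J'\}$; this reduces the integrand to the form in \eqref{eq:var.101}. I would then condition on $\{Z_k:k\in J\cup J'\}$ and use that the remaining $n-\#(J\cup J')$ sample points must avoid the union $B_d(z,\max_{j\in J}\|z-Z_j\|)\cup B_d(z',\max_{j\in J'}\|z'-Z_j\|)$, producing the estimate \eqref{eq:var.3}.

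Next, under Assumption \ref{ass:deigndens} the density $f_Z$ is bounded below by $\rho$ on $S$, and under Assumption \ref{ass:boundeffect} every ball $B_d(z,r)$ with $z\in S^*$ and $r\leq\rho'$ is contained in $S$; hence the mass of the union is at least $\rho\cdot\pi^{d/2}\min\{\rho',\alpha\}^d/\Gamma(d/2+1)=C_\alpha\min\{\rho',\alpha\}^d$, where $\alpha$ is defined in \eqref{eq:alphadef1}. Using $(1-u)^{n-\#(J\cup J')}\leq\exp(-(n-\#(J\cup J'))u)$, followed by a Cauchy--Schwarz step to split the two factors of $\mathbf{e}_0^\top\mathcal{M}_J^{-1}\mathbf{e}_0$, yields \eqref{eq:var.4}. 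The final move is to pass from $\alpha$ to $\alpha^*$: on $\{J\cap J'\neq\emptyset\}$ some $j\in J\cap J'$ yields $\alpha\geq\|z-z'\|/2$ by the triangle inequality, while the triangle inequality also gives $\alpha\geq\max_{j\in J\cup J'}\|z-Z_j\|-\|z-z'\|$, and combining the two bounds gives $\alpha\geq\alpha^*$ as in \eqref{eq:alphadef}. Substituting this into the exponential and integrating over $z,z'\in S^*$ produces \eqref{eq:boundvarfirst}.

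The second inequality \eqref{eq:boundexpecvorcell} then comes out as a by-product: writing $\mathbb{E}[\lambda(C(J))\lambda(C(J'))]=\iint_{(S^*)^2}\mathbb{P}(z\in C(J),\,z'\in C(J'))\dd z\dd z'$ by Tonelli and repeating the same conditioning on $\{Z_k:k\in J\cup J'\}$ together with the same lower bound on the measure of the union of balls and the same passage $\alpha\geq\alpha^*$, but without the $\mathcal{M}_J^{-1}$ factors and without needing Cauchy--Schwarz, delivers the claim directly. The only part that requires genuine care, rather than mere bookkeeping, is the geometric lower bound $\alpha\geq\alpha^*$ on $\{J\cap J'\neq\emptyset\}$; everything else is a consolidation of the display-level computations already present in the text.
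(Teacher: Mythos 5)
Your proposal follows the paper's own chain of reasoning essentially line by line: you start from \eqref{eq:var.0} with the conditional Cauchy--Schwarz step, invoke the conditional variance bound \eqref{eq:boundcondvar}, correctly observe that on $\{z\in C(J)\}$ one has $\mathcal{M}(z)=\mathcal{M}_J(z)$ (which makes the $\mathbf{e}_0^\top\mathcal{M}_J(z)^{-1}\mathbf{e}_0$ factor measurable w.r.t.\ $\{Z_k:k\in J\cup J'\}$), condition on those points, bound the avoidance probability using $\rho$ and Assumption \ref{ass:boundeffect}, apply Cauchy--Schwarz to split the two $\mathcal{M}$-factors, and finally pass from $\alpha$ to $\alpha^*$ via the triangle-inequality argument valid when $J\cap J'\neq\emptyset$. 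Your treatment of \eqref{eq:boundexpecvorcell} --- Tonelli to write $\mathbb{E}[\lambda(C(J))\lambda(C(J'))]$ as a double integral of $\mathbb{P}(z\in C(J),\,z'\in C(J'))$, then the same conditioning and geometric bound with the $\mathcal{M}$-factors and Cauchy--Schwarz stripped away --- is exactly the shortcut the paper indicates in the remark following the lemma. This is the same proof.
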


\noindent Note that, in order to get (\ref{eq:boundexpecvorcell}), we consider the integrals on the right side of (\ref{eq:var.1}) without the term $\big({\bf e}_0^\top {\cal M}(z)^{-1} {\bf e}_0\big)^{1/2}  \big({\bf e}_0^\top {\cal M}(z')^{-1} {\bf e}_0\big)^{1/2}$ and apply the subsequent bounding techniques analogously as for (\ref{eq:boundvarfirst}).

\medskip

\textit{Step 2: Concluding for $L=0$.}

\smallskip

For $L=0$ we can drop ${\bf e}_0^\top {\cal M}_{J}(z)^{-1} {\bf e}_0 = 1/K$ from the expected values, so that we can insert \eqref{eq:boundexpecvorcell} into \eqref{eq:var.1}.
%\eqref{eq:boundvarfirst}.

Writing
$$n^* := (n-2K) \, C_\alpha \, \asymp \, n$$
and using the inequality $\# J\cup J' \leq 2K$, we have that 
\begin{align} & \mathbb{E}\,\Big[   \exp\big(-(n-\#J\cup J') \cdot C_\alpha \cdot \min\{\rho',\alpha^*\}^d  \big)\Big] 
\,\nonumber\\
\leq \, &  \exp\big(-n^*\cdot (\rho')^d\big) \, + \, \mathbb{E}\,\big[ \exp\big(-n^* \cdot (\alpha^*)^d\big)\big]\,.\label{eq:boundhelpag1}
\end{align}
Since the first term decreases exponentially in $n$, we focus on the second.
It holds that
\begin{align*}
& \mathbb{E}\,\big[ \exp\big(-n^* \cdot (\alpha^*)^d\big)\big] \\
\, = \, & \int_0^1 \mathbb{P}\big[\alpha^* < \big(- (\log t) / n^*\big)^{1/d}\big]\, \dd t \,\\
\, = \,&  \int_0^\infty \mathbb{P}\big[\alpha^* < (s/n^*)^{1/d} \big]\, \exp(-s)\, \dd s \\
\, = \, & \int_{s> n^* \|z-z'\|^d / 2^d} \mathbb{P}\big[\|z-Z_j\| < 3 (s/n^*)^{1/d} \, , \, \forall j\in J\cup J'\big] \, \exp(-s)\, \dd s \\
\, = \,&  \int_{s> n^* \|z-z'\|^d / 2^d} \mathbb{P}_Z\big(B_d(z,3 (s/n^*)^{1/d})\big)^{\# J\cup J'} \, \exp(-s)\, \dd s \\
\, \leq \,& \big\{\overline{\rho} \pi^{d/2} 3^d / \big( n^* \Gamma(d/2+1)\big)\big\}^{\# J\cup J'} \, \int_{s> n^* \|z-z'\|^d / 2^d} \, s^{\# J\cup J'} \, \exp(-s)\, \dd s\,.
\end{align*}
As $\max\{1,s^{2K}\} \exp(-s) \, \leq \, (4K)^{2K} \exp(-s/2)$ for all $s>0$, it follows that
\begin{align*}
\int_{s> n^* \|z-z'\|^d / 2^d} \, s^{\# J\cup J'} \, \exp(-s)\, \dd s & \, \leq \, (4K)^{2K} \, \int_{s> n^* \|z-z'\|^d / 2^d} \, \exp(-s/2)\, \dd s \\ & \, = \, 2 (4K)^{2K} \cdot \exp\big(- n^* \|z-z'\|^d/ 2^{d+1}\big)\,.
\end{align*}
Moreover, 
\begin{align*} \iint_{(S^*)^2} \exp\big(-n^* \|z-z'\|^d/ 2^{d+1}\big) \, \dd z \, \dd z' & \, \leq \, \iint_{S^*\times \mathbb{R}^d}  \exp\big(-n^* \|w\|^d/ 2^{d+1}\big) \, \dd w \, \dd z' \\ & \, = \, \lambda_d(S^*) \cdot \int \exp\big(-\|w\|^d / 2^{d+1}\big) \, \dd w\, / \, n^*.
\end{align*}
Observing \eqref{eq:boundexpecvorcell} in Lemma \ref{lem:firststepvarest}, we obtain the first part of the following lemma.
\begin{lemma}\label{lem:vorcellorder}
Under the assumptions of Theorem \ref{th:theoremparrate}, for $J,J'\in {\cal J}_K$ with $J\cap J' \neq \emptyset$, 
\begin{align} \label{eq:boundexpecvorcell1}
	\mathbb{E}\,\Big[ \lambda\big(C(J)\big) \,  \lambda\big(C(J')\big) \Big] & \leq \iint_{(S^*)^2} \,  \mathbb{E}\,\Big[   \exp\big(-(n-\#J\cup J') \cdot C_\alpha \cdot \min\{\rho',\alpha^*\}^d  \big)\Big]\, \dd z \, \dd z'\nonumber\\
	&\leq \mbox{const.} \cdot n^{-\# J\cup J' - 1},
	%&  =  \sum_{J,J'\in {\cal J}_K} {\bf 1}\{J\cap J' \neq \emptyset\} \, \mathbb{E} \int_{C(J)} \int_{C(J')}  \mathbb{E}\big(\hat{G}^2(z) | \sigma_Z\big)^{1/2} \cdot \mathbb{E}\big(\hat{G}^2(z') | \sigma_Z\big)^{1/2} \, dz \, dz'\,,
\end{align}
where the constant only depends on $K$, $d$, $\rho$, $\overline{\rho}$, $\rho'$ and $\rho''$.

In particular, for $L=0$ we obtain 
$$ \mathbb{E} \, \big[\mbox{var}\big(\hat{\Psi} | \sigma_Z\big)\big]  \lesssim n^{-1}.$$
\end{lemma}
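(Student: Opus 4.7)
The first bound \eqref{eq:boundexpecvorcell1} is essentially the chain of estimates already laid out in the two paragraphs immediately preceding the lemma. Starting from \eqref{eq:boundexpecvorcell} of Lemma \ref{lem:firststepvarest}, the plan is to split $\min\{\rho',\alpha^*\}^d$ into the regimes $\alpha^* \geq \rho'$ (which yields a term decaying exponentially in $n$) and $\alpha^* < \rho'$, and to treat the remaining term $\E[\exp(-n^*(\alpha^*)^d)]$, with $n^* = (n-2K)\, C_\alpha \asymp n$, via the layer-cake identity $\E[\exp(-n^*(\alpha^*)^d)] = \int_0^\infty \Pr(\alpha^* < (s/n^*)^{1/d})\, e^{-s}\, \dd s$. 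The lower bound $\alpha^* \geq \|z-z'\|/2$ from \eqref{eq:alphadef} restricts the effective $s$-range to $s > n^*\|z-z'\|^d/2^d$, and Assumption \ref{ass:deigndens} converts $\Pr(\|z - Z_j\| < 3(s/n^*)^{1/d}\ \forall j \in J \cup J')$ into a bound of the form $(C s / n^*)^{\# J \cup J'}$, which extracts the prefactor $n^{-\# J \cup J'}$. The estimate $s^{\# J \cup J'} e^{-s} \leq (4K)^{2K} e^{-s/2}$ reduces the tail $s$-integral to $\cO(e^{-c n^* \|z-z'\|^d})$; a translation argument $w = z - z'$ in the $\iint_{(S^*)^2}$ integration then yields an additional factor $\cO(1/n^*)$, for a total order $n^{-\# J \cup J' - 1}$.

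For the variance conclusion I specialize the first bound to $L=0$: here $\cK = \{\mathbf{0}\}$, so $\xi(z,w) \equiv 1$, the matrix ${\cal M}_J(z)$ in \eqref{eq:mjz} reduces to the scalar $K$, and ${\bf e}_0^\top {\cal M}_J(z)^{-1} {\bf e}_0 = 1/K$ is deterministic. Carrying this constant out of the expectations in \eqref{eq:var.1} (no Cauchy--Schwarz split as in \eqref{eq:var.101}--\eqref{eq:var.4} is needed) and exchanging expectation and integration to obtain $\iint_{(S^*)^2} \E[{\bf 1}_{C(J)}(z)\, {\bf 1}_{C(J')}(z')]\, \dd z\, \dd z' = \E[\lambda(C(J))\, \lambda(C(J'))]$ gives
\[
\E\bigl[\mathrm{var}(\hat\Psi \mid \sigma_Z)\bigr] \;\leq\; \frac{\CV}{K} \sum_{\substack{J, J' \in \cJ_K \\ J \cap J' \neq \emptyset}} \E\bigl[\lambda(C(J))\, \lambda(C(J'))\bigr].
\]
I would then stratify the sum by $j := \#(J \cap J') \in \{1, \ldots, K\}$. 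The number of ordered pairs with overlap exactly $j$ is $\binom{n}{K}\binom{K}{j}\binom{n-K}{K-j} = \cO(n^{2K-j})$, while $\# J \cup J' = 2K - j$, so by the first part of the lemma each summand is $\cO(n^{-(2K-j)-1})$. Each stratum thus contributes $\cO(n^{-1})$, and summing over the $K$ strata yields the desired bound.

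The delicate point is the bookkeeping inside the layer-cake estimate: one must ensure that the polynomial factor $s^{\# J \cup J'}$ arising from raising the ball-probability bound to the appropriate power precisely matches the prefactor $n^{-\# J \cup J'}$, and that the sharp geometric inequality $\alpha^* \geq \|z-z'\|/2$ (rather than the weaker $\alpha^* \geq \max_{j \in J \cup J'} \|z - Z_j\|/3$) is used in order to extract the additional $n^{-1}$ factor from integration near the diagonal. Once the first part is proved with this sharpness, the $L=0$ variance bound is a short combinatorial consequence.
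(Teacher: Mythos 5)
Your proposal is correct and takes essentially the same route as the paper: you reproduce the layer-cake estimate with the two lower bounds on $\alpha^*$ (one extracting the $(n^*)^{-\# J\cup J'}$ factor from the ball probability, the other restricting the $s$-range so that a subsequent substitution $w=z-z'$ yields the extra $(n^*)^{-1}$), and for the $L=0$ conclusion you observe that ${\cal M}(z)=K$ is deterministic, bound the conditional variance of $\hat\Psi$ by a constant times $\sum_{J\cap J'\neq\emptyset}\E[\lambda(C(J))\lambda(C(J'))]$, and close with the same combinatorial stratification over $\#(J\cap J')$ that the paper uses in \eqref{eq:sumbinom}.
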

For the second part, using Lemma \ref{lem:firststepvarest} and the previous bound \eqref{eq:boundexpecvorcell1} we obtain
\begin{align*} \nonumber
\mathbb{E} \, \big[\mbox{var}\big(\hat{\Psi} | \sigma_Z\big)\big] 
\leq  \mbox{const.} \, \CV\,  n^{-1}\, \cdot \sum_{J,J'\in {\cal J}_K} {\bf 1}\{J\cap J' \neq \emptyset\} \, n^{-\# J\cup J'}  
%&  =  \sum_{J,J'\in {\cal J}_K} {\bf 1}\{J\cap J' \neq \emptyset\} \, \mathbb{E} \int_{C(J)} \int_{C(J')}  \mathbb{E}\big(\hat{G}^2(z) | \sigma_Z\big)^{1/2} \cdot \mathbb{E}\big(\hat{G}^2(z') | \sigma_Z\big)^{1/2} \, dz \, dz'\,,
\end{align*}
Results from combinatorics provide that ${\cal J}_K$ contains exactly ${n\choose K}$ elements; and that, for each $J\in {\cal J}_K$, there exist exactly ${K \choose \ell} {n-K \choose K-\ell}$ sets $J' \in {\cal J}_K$ with $\# (J\cap J') = \ell$. Therefore,
\begin{align}  
&\, \sum_{J,J'\in {\cal J}_K} {\bf 1}\{J\cap J' \neq \emptyset\} \, n^{-\# J\cup J'} =   \, \sum_{\ell=1}^K \sum_{J,J'\in {\cal J}_K}  {\bf 1}\{\# J\cap J'=\ell\} \, n^{\ell-2K} \nonumber\\
\, \leq & \, \frac{n^K}{K!} \sum_{\ell=1}^K {K \choose \ell} \frac1{(K-l)!} \cdot n^{K-\ell+\ell-2K} \, \leq \, (2^K-1)/K!\,,\label{eq:sumbinom}
\end{align}
which proves the second part of the lemma. 

%Therefore (\ref{eq:ff}) admits the upper bound
%$$ \mbox{const.} \cdot n^{-\# J\cup J'/2} \cdot\exp\big(- n^* \|z-z'\|^d/ 2^{d+2}\big)\,, $$
%where the constant only depends on $K$, $d$, $L$, $\rho$, $\overline{\rho}$, $\rho'$ and $\rho''$. In the sequel, $\mbox{const.}$ may change its value from line to line. The second factor in (\ref{eq:var.4}) can be treated analogously by exchanging the roles of $J$ and $J'$ as well as those of $z$ and $z'$. Finally, (\ref{eq:var.101}) is bounded from above by

%$$ \mbox{const.} \cdot  n^{-\# J\cup J'} \cdot  \exp\big(- n^* \|z-z'\|^d/ 2^{d+1}\big)\,, $$
%and (\ref{eq:var.1}) is smaller or equal to

%$$ \mbox{const.}\cdot \|G_2\|_\infty \, \sum_{\ell=1}^K \sum_{J,J'\in {\cal J}_K}  {\bf 1}\{\# J\cap J'=\ell\} \, n^{\ell-2K} \cdot \iint_{(S^*)^2} \exp\big(-n^* \|z-z'\|^d/ 2^{d+1}\big) \, dz \, dz'
%$$
%Moreover consider that
%\begin{align*} \iint_{(S^*)^2} \exp\big(-n^* \|z-z'\|^d/ 2^{d+1}\big) \, dz \, dz' & \, \leq \, \iint_{S^*\times \mathbb{R}^d}  \exp\big(-n^* \|w\|^d/ 2^{d+1}\big) \, dw \, dz' \\ & \, = \, \lambda_d(S^*) \cdot \int \exp\big(-\|w\|^d / 2^{d+1}\big) \, dw\, / \, n^*,
%\end{align*}

%\newpage

\medskip

\textit{Step 3: Reducing the case $L \geq 1$ to $L=0$}

\smallskip

We show that for $L \geq 1$, 
	\begin{align} & \mathbb{E}\,\Big[ \big({\bf e}_0^\top {\cal M}_{J}(z)^{-1} {\bf e}_0\big) \, \exp\big(-(n-\#J\cup J') \cdot C_\alpha \cdot \min\{\rho',\alpha^*\}^d  \big)\Big] \nonumber \\
		\, \leq \, & C_3\, \mathbb{E}\,\Big[  \exp\big(-(n-\#J\cup J') \cdot C_\alpha \cdot \min\{\rho',\alpha^*\}^d  \big)\Big], \label{eq:boundgencase}
	\end{align}
	for some constant $C_3 >0$. Then, by symmetry of \eqref{eq:boundvarfirst} in $z$, $J$ and $z'$ and $J'$ we obtain that the argument that we give here also applies to $L \geq 1$. 

We need to prove \eqref{eq:boundgencase}, then the calculation from the second step for $L=0$ applies to $L \geq 1$ as well. 
This requires a quite sophisticated conditioning argument.

%Using the equality (\ref{eq:ff1}), Lemma \ref{L:u0bound} and the inequality $\# J\cup J' \leq 2K$, the term (\ref{eq:ff}) obeys the upper bound
%\begin{align*} C_3^{1/2} & \cdot  \Big\{ \mathbb{E}\, \exp\big\{-(n-2K) \cdot \rho \cdot \pi^{d/2} \cdot \min\{\rho',\alpha^*\}^d / \Gamma(d/2+1)\big\}\Big\}^{1/2} \\
% & \, \leq \, C_3^{1/2} \cdot  \big\{\exp\big(-n^*\cdot (\rho')^d\big) \, + \, \mathbb{E}\, \exp\big(-n^* \cdot (\alpha^*)^d\big)\big\}^{1/2}\,.
% \end{align*}
%for $\eta=1$.

By $\widehat{j}$ we denote the smallest $j\in J$ such that $\|z-Z_j\| \geq \|z-Z_k\|$ for all $k\in J$; moreover we define 
$$\alpha' := \max_{j \in J'\backslash J} \|z-Z_j\|.$$ 
The $\sigma$-fields generated by $\widehat{j}$, $Z_{\widehat{j}}$, $\alpha'$, on the one hand, and by $\widehat{j}$, $Z_{\widehat{j}}$, on the other hand, are called $\mathfrak{A}'$ and $\mathfrak{A}$, respectively. 
Note that with this notation we can write $\alpha^*$ in \eqref{eq:alphadef} as 
$$\alpha^* = \max\big\{\max\{\|z-Z_{\widehat{j}}\|,\alpha'\}/3 \, , \, \|z-z'\|/2\big\}\,, $$
and that the random variable $\alpha^*$ is measurable with respect to $\mathfrak{A}'$. 
Then  the first factor in (\ref{eq:var.4}) has the upper bound
\begin{align} \label{eq:ff}
& \mathbb{E}\,\Big[ \big({\bf e}_0^\top {\cal M}_{J}(z)^{-1} {\bf e}_0\big) \, \exp\big(-(n-\#J\cup J') \cdot C_\alpha \cdot \min\{\rho',\alpha^*\}^d  \big)\Big]\nonumber \\
\, = \, & \mathbb{E}\,\Big[ \mathbb{E}\,\big[\big({\bf e}_0^\top {\cal M}_{J}(z)^{-1} {\bf e}_0\big) \mid \mathfrak{A}'\big] \, \exp\big(-(n-\#J\cup J') \cdot C_\alpha \cdot \min\{\rho',\alpha^*\}^d  \big)\Big]\nonumber \\
\, = \, & \mathbb{E}\,\Big[ \mathbb{E}\,\big[\big({\bf e}_0^\top {\cal M}_{J}(z)^{-1} {\bf e}_0\big) \mid \mathfrak{A}\big] \, \exp\big(-(n-\#J\cup J') \cdot C_\alpha \cdot \min\{\rho',\alpha^*\}^d  \big)\Big],
%
%
%\Big\{ \mathbb{E}\, \mathbb{E}\big({\bf e}_0^\top {\cal M}_J(z)^{-1} {\bf e}_0 \mid \mathfrak{A}'\big) \, \exp\big\{-(n-\#J\cup J') \cdot \rho \cdot \pi^{d/2} \cdot \min\{\rho',\alpha^*\}^d / \Gamma(d/2+1)\big\}\Big\}^{1/2}\,.
\end{align}
where in the last step we used that the random vector $\big({\bf e}_0^\top {\cal M}_J(z) {\bf e}_0,\widehat{j},Z_{\widehat{j}}\big)$ and $\alpha'$ are independent. 

\begin{lemma} \label{L:u0bound}
Under the assumptions of Theorem \ref{th:theoremparrate}, given $L\geq 1$ and $\eta\geq 1$ for $K \geq 1 + (\lfloor 2\eta D\rfloor+1)K^*$, the matrix ${\cal M}_J(z)$ is invertible and we have that 
$$ \mathbb{E}\,\big[\big({\bf e}_0^\top {\cal M}_{J}(z)^{-1} {\bf e}_0\big)^\eta \mid \mathfrak{A}\big] \, \leq \, C_3\,, $$
almost surely, where the deterministic constant $C_3 \in (0,\infty)$ only depends on $\eta$, $d$, $L$, $\rho$, $\overline{\rho}$, $\rho'$, $\rho''$. 
\end{lemma}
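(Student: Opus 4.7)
The plan is to (i) reduce to a scale-invariant eigenvalue problem, (ii) partition the $K$-nearest neighbors into independent Vandermonde-size blocks, and (iii) apply a small-ball inequality for polynomials to each block. First, I would rescale. Set $r := \|z - Z_{\hat{j}}\|$ and $\tilde{Z}_j := (Z_j - z)/r$ for $j \in J$. Writing $\xi(z, Z_j) = D_r \tilde{\xi}_j$ with $D_r := \mathrm{diag}(r^{|\kappa|})_{\kappa \in \mathcal{K}}$ and $\tilde{\xi}_j := (\tilde{Z}_j^\kappa)_{\kappa \in \mathcal{K}}$, one has $\mathcal{M}_J(z) = D_r \tilde{\mathcal{M}} D_r$ with $\tilde{\mathcal{M}} := \sum_{j \in J} \tilde{\xi}_j \tilde{\xi}_j^\top$. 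Since $D_r \mathbf{e}_0 = \mathbf{e}_0$, the target quantity is scale-invariant,
$$ \mathbf{e}_0^\top \mathcal{M}_J(z)^{-1} \mathbf{e}_0 \, = \, \mathbf{e}_0^\top \tilde{\mathcal{M}}^{-1} \mathbf{e}_0 \, \leq \, \lambda_{\min}(\tilde{\mathcal{M}})^{-1}, $$
so it suffices to bound $\mathbb{E}[\lambda_{\min}(\tilde{\mathcal{M}})^{-\eta} \mid \mathfrak{A}]$. All $\|\tilde{Z}_j\| \leq 1$ by construction, and conditional on $\mathfrak{A}$ the $\tilde{Z}_j$, $j \in J \setminus \{\hat{j}\}$, are i.i.d.\ on $B_d(0,1)$ with density bounded above by a constant and bounded below by some $c_0 > 0$ on $B_d(0, \delta)$ with $\delta := \min(1, \rho'/\rho'')$; this uses Assumption~\ref{ass:boundeffect} (so that $B_d(z, \rho') \subseteq S$) and $r \leq \rho''$.

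Second, since $K - 1 \geq (\lfloor 2\eta D\rfloor + 1) K^*$, I partition $J \setminus \{\hat{j}\}$ into $m := \lfloor 2\eta D\rfloor + 1$ disjoint blocks $G_1, \ldots, G_m$ of size $K^*$, and denote by $X_k$ the $K^* \times K^*$ matrix with rows $\tilde{\xi}_j^\top$, $j \in G_k$. These blocks are conditionally independent given $\mathfrak{A}$. From the semidefinite ordering $\tilde{\mathcal{M}} \succeq X_k^\top X_k$, the bound $\lambda_{\max}(X_k^\top X_k) \leq (K^*)^2$ (since $\|\tilde{\xi}_j\| \leq \sqrt{K^*}$), and the standard relation $\lambda_{\min}(A) \geq \det(A) / \lambda_{\max}(A)^{K^* - 1}$ for positive semidefinite $A$,
$$ \lambda_{\min}(\tilde{\mathcal{M}}) \,\geq\, \max_{k=1,\ldots,m} \lambda_{\min}(X_k^\top X_k) \,\geq\, (K^*)^{-2(K^*-1)}\, \max_{k=1,\ldots,m} |\det(X_k)|^2. $$
Thus it is enough to force at least one of the $|\det(X_k)|$ to be not too small.

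Third, viewed as a function of the $dK^*$ coordinates of the points in $G_k$, $\det(X_k)$ is a homogeneous polynomial of degree exactly $D = \sum_{l=1}^L l \binom{d+l-1}{l}$ (each column indexed by $\kappa$ contributes a monomial of degree $|\kappa|$) whose leading monomial coefficient is $\pm 1$. A Carbery--Wright small-ball inequality applied to the uniform measure on the convex body $B_d(0,1)^{K^*}$, together with the density upper bound from Step~1, then yields
$$ \mathbb{P}\bigl(|\det(X_k)| < \epsilon \,\big|\, \mathfrak{A}\bigr) \,\leq\, C_{\mathrm{sb}}\, \epsilon^{1/D} $$
uniformly in $\epsilon > 0$, with $C_{\mathrm{sb}}$ depending only on $d, L, \rho, \overline{\rho}, \rho', \rho''$. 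Conditional independence of the blocks then gives $\mathbb{P}(\lambda_{\min}(\tilde{\mathcal{M}}) < \epsilon \mid \mathfrak{A}) \lesssim \epsilon^{m/(2D)}$, and $m/(2D) > \eta$ by the choice $m = \lfloor 2\eta D\rfloor + 1 > 2\eta D$. Integrating the tail produces the desired uniform bound $\mathbb{E}[\lambda_{\min}(\tilde{\mathcal{M}})^{-\eta} \mid \mathfrak{A}] \leq C_3$ almost surely, with a.s.\ invertibility of $\mathcal{M}_J(z)$ as a byproduct. The hardest part will be this small-ball step: one has to verify a uniform lower bound on $\|\det(X_k)\|_{L^1}$ so that Carbery--Wright applies with constants independent of $Z_{\hat{j}}$, and to carry the inequality from its native log-concave comparison measure across to the actual non-log-concave conditional density of the $\tilde{Z}_j$.
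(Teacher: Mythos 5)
Your proposal is correct and follows the same overall architecture as the paper's proof: reduce $\mathbf{e}_0^\top\mathcal{M}_J(z)^{-1}\mathbf{e}_0$ to a scale-invariant spectral quantity for the normalized points $(Z_j-z)/\|z-Z_{\hat j}\|$, exploit the conditional i.i.d.\ structure given $\mathfrak{A}$ (the paper's Lemma~\ref{L:conddist}), partition the remaining $K-1$ indices into $m=\lfloor 2\eta D\rfloor+1$ disjoint blocks of size $K^*$, prove a small-ball estimate of order $\varepsilon^{1/D}$ for the Vandermonde-type determinant on a single block, and then boost the exponent to $m/(2D)>\eta$ by conditional independence of blocks before integrating the tail. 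The genuine difference is in the small-ball lemma. The paper proves it by hand (Lemmas~\ref{L:univar} and~\ref{L:01}): fix a point $w$ where $|\vartheta_{d,L}|$ attains a maximum $M$ on a small ball, factor the univariate polynomial $r\mapsto\vartheta_{d,L}(w+rW)$ via the fundamental theorem of algebra into at most $D$ roots, and use a uniform upper bound on the conditional density of $\|V-w\|$ given $W$ and $\mathfrak{A}$ (this is where Assumptions~\ref{ass:deigndens}--\ref{ass:boundeffect} and $r\leq\rho''$ enter). You instead invoke the Carbery--Wright anti-concentration inequality for log-concave measures, applied to the uniform law on $B_d(0,1)^{K^*}$, and transfer the bound to the actual (non-log-concave) conditional density via the density-ratio argument. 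Both routes are valid. Yours is shorter once Carbery--Wright is granted and makes the exponent $1/D$ transparent; the paper's is self-contained, elementary, and avoids log-concavity altogether. The two caveats you flag are exactly where the work lies, but neither is an obstacle: $\|\vartheta_{d,L}\|_{L^1}$ against the uniform probability measure on $B_d(0,1)^{K^*}$ is a fixed positive constant depending only on $d$ and $L$ (since $\vartheta_{d,L}$ is a fixed nonzero polynomial), and the density-ratio transfer works because the conditional density's denominator is bounded below by $\rho\,\omega_d\min(\rho'/\rho'',1)^d$ using $B_d(z,\rho')\subseteq S$. One cosmetic difference: you bound $u_0\leq\lambda_{\min}(\tilde{\mathcal M})^{-1}$ directly, whereas the paper uses the exact identity $u_0=1/\inf_{P\in\mathcal P_1}\sum_j P^2(V_j)$ and then compares with the determinant; both reduce to the same $\det/\lambda_{\max}^{K^*-1}$ step.
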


The proof of the Lemma is deferred to the supplement (Section \ref{sec:technicaldetails}). This concludes the proof of the first inequality in \eqref{eq:boundsth}. \hfill $\qed$

%where $\lambda_d$ stands for the $d$-dimensional Lebesgue measure. Finally we arrive at the following upper bound on the expected conditional variance, which reflects the usual parametric convergence rates.
%\begin{proposition} \label{P:1}
%Grant that $d\geq 2$; $L\geq 1$; and choose $K > 1+(\lfloor 2D \rfloor+1)K^*$ where $D$ and $K^*$ are as in Lemma \ref{L:01}. Note that $L$ and $K$ do not increase in $n$. Then,
%$$ \mathbb{E} \, \mbox{var}\big(\hat{\Psi} \mid \sigma_Z\big) \, \leq \, \mbox{const.} \cdot \|G_2\|_\infty \, / \, n\,, $$
%where the constant factor only depends on $d$, $L$, $K$, $\lambda_d(S^*)$, $\rho$, $\overline{\rho}$, $\rho'$ and $\rho''$.
%\end{proposition}

\subsubsection{Expected squared conditional bias in \eqref{eq:boundsth}}\label{sec:squaredcondbias}

To prove the second inequality in \eqref{eq:boundsth} we proceed in two steps. First, for the case $L=0$ we use a bound on expected integrated distances of points in $S^*$ to their $K$-nearest neighbors provided in Lemma \ref{lem:biasone} below. Second, we use a Taylor approximation and again the bound for the conditional moments of $({\bf e}_0^\top {\cal M}_{J}(z)^{-1} {\bf e}_0)$ from Lemma \ref{L:u0bound} to cover the case $L \geq 1$.

\medskip

\textit{Step 1: The case $L=0$.}

\smallskip

Suppose that $L=0$ and that $ G \in \cG(0, \beta, \CH)$. Then 

\begin{align*}% \label{eq:bias1}
&	\E \big[\big|\E[\hat{\Psi} | \sigma_Z] - \Psi\big|^2\big]   \, = \, \E \Big[\Big|
\sum_{J\in {\cal J}_K} \,  \int_{C(J)} \frac1K\,\sum_{j\in J} \big(G(Z_{j}) - G(z)\big)\,\dd z\Big|^2\Big]\,\\
\, \leq \, &\, \CH^2\, \sum_{J, J'\in {\cal J}_K} \int_{S^*}\, \int_{S^*} \E\big[1_{C(J)}(z)\, 1_{C(J')}(z')\, \|z - Z_{(K)}(z)\|^\beta \,\|z' - Z_{(K)}(z')\|^\beta \,  \big] \, \rd z \, \rd z' \, ,
\end{align*}
where $Z_{(K)}(z)$ is the $K$ - nearest neighbor of $z$ in $Z_1, \ldots, Z_n$. 
\begin{lemma}\label{lem:biasone}
Under the Assumptions \ref{ass:deigndens} and \ref{ass:boundeffect} we have for $\beta >0$ (not necessarily $\leq 1$) and $K \in \N$ that 
$$ \sum_{J, J'\in {\cal J}_K} \int_{S^*}\, \int_{S^*} \E\big[1_{C(J)}(z)\, 1_{C(J')}(z')\, \|z - Z_{(K)}(z)\|^\beta \,\|z' - Z_{(K)}(z')\|^\beta \,  \big] \, \rd z \, \rd z' \, \lesssim n^{-\frac{2\, \beta}{d}}.$$
\end{lemma}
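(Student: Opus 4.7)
\textbf{Proof plan for Lemma \ref{lem:biasone}.}

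The first and key observation is that the higher-order Voronoi cells $\{C(J) : J \in \cJ_K\}$ form a partition of $S^*$ up to a Lebesgue-null set (their boundaries). Consequently, for Lebesgue-almost every $(z,z') \in (S^*)^2$, exactly one term in the double sum $\sum_{J,J'} 1_{C(J)}(z) 1_{C(J')}(z')$ is nonzero and equals $1$. Interchanging sum and integral via Tonelli thus collapses the expression on the left-hand side to
\begin{equation*}
\int_{S^*}\int_{S^*} \E\big[\|z - Z_{(K)}(z)\|^\beta \|z' - Z_{(K)}(z')\|^\beta\big]\, \dd z\, \dd z'.
\end{equation*}
I would then apply the Cauchy--Schwarz inequality (under $\mathbb{P}$) followed by the Cauchy--Schwarz inequality for the integrals over $S^*$ to factor this as
\begin{equation*}
\Big(\int_{S^*} \big(\E[\|z - Z_{(K)}(z)\|^{2\beta}]\big)^{1/2}\, \dd z\Big)^{2} \;\leq\; \lambda(S^*)\cdot \sup_{z \in S^*} \E[\|z - Z_{(K)}(z)\|^{2\beta}].
\end{equation*}

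The heart of the argument is therefore to show
\begin{equation*}
\sup_{z\in S^*} \E[\|z - Z_{(K)}(z)\|^{2\beta}] \lesssim n^{-2\beta/d}.
\end{equation*}
I would obtain this by bounding the tail probability and integrating. Fix $z \in S^*$; by Assumption \ref{ass:boundeffect} the ball $B_d(z,r) \subseteq S$ for all $r \leq \rho'$, and by Assumption \ref{ass:deigndens} the density $f_Z \geq \rho$ on $S$, so
\begin{equation*}
p_r := \Pr(\|z - Z_1\| \leq r) \geq \rho\, c_d\, r^d, \qquad c_d = \pi^{d/2}/\Gamma(d/2+1),
\end{equation*}
for $r \leq \rho'$, while trivially $p_r \leq \bar\rho c_d r^d$. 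The event $\{\|z - Z_{(K)}(z)\| > r\}$ is the event that fewer than $K$ of the $Z_j$ fall in $B_d(z,r)$, so the binomial tail bound gives
\begin{equation*}
\Pr(\|z - Z_{(K)}(z)\| > r) \leq \sum_{k=0}^{K-1} \binom{n}{k} (\bar\rho c_d r^d)^{k}\, \exp\big(-(n-K)\rho c_d r^d\big),\qquad r\leq \rho'.
\end{equation*}
For $r \in (\rho',\rho'']$ the same bound holds with $r^d$ replaced by $(\rho')^d$ inside the exponential, which gives an exponentially small contribution.

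Using $\E[X^{2\beta}] = 2\beta \int_0^\infty r^{2\beta-1}\Pr(X>r)\,\dd r$ with $X = \|z - Z_{(K)}(z)\|$ and the substitution $s = (n-K)\rho c_d r^d$ (so $r^{2\beta-1}\,\dd r \propto n^{-2\beta/d}\, s^{2\beta/d - 1}\, \dd s$), the binomial factor $\binom{n}{k} r^{dk}$ absorbs as $s^k/k!$ up to constants, and what remains is a convergent integral of $s^{2\beta/d - 1 + k}\exp(-cs)$ over $(0,\infty)$, giving the desired $n^{-2\beta/d}$ bound with a constant depending only on $K$, $d$, $\beta$, $\rho$, $\bar\rho$, $\rho'$, $\rho''$.

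The main (and only nontrivial) obstacle is the computation of $\E[\|z - Z_{(K)}(z)\|^{2\beta}]$, which is standard for $K$-nearest-neighbor distances but requires care to extract the clean $n^{-2\beta/d}$ rate uniformly in $z \in S^*$; the strong density assumption and the $\rho'$-buffer in Assumption \ref{ass:boundeffect} are exactly what is needed to rule out boundary effects that would otherwise spoil this uniformity.
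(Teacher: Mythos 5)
Your proof is correct, and it takes a genuinely different and more direct route than the paper's. You collapse the double sum over $J,J'$ at the outset by observing that $\{C(J)\}_{J\in\mathcal J_K}$ is (a.s.) a Lebesgue-partition of $S^*$, then decouple $z$ and $z'$ by Cauchy--Schwarz and reduce to the standard uniform $K$-nearest-neighbor moment bound $\sup_{z\in S^*}\E[\|z-Z_{(K)}(z)\|^{2\beta}]\lesssim n^{-2\beta/d}$, which your tail-integration argument correctly establishes (the $\rho'$-buffer in Assumption \ref{ass:boundeffect} is exactly what makes this bound uniform on $S^*$). The paper, by contrast, does not collapse the sum: it keeps all pairs $(J,J')$, bounds each summand via the auxiliary random variable $\alpha(z,z',J,J')$ and the exponential inequality it had already established for the variance term (the argument leading to \eqref{eq:var.4}), obtaining an individual bound of order $n^{-2\beta/d}\,n^{-\#(J\cup J')}$, and then sums combinatorially using $\sum_{J,J'\in\mathcal J_K}n^{-\#(J\cup J')}\leq 2^K/K!$. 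Your version is shorter and self-contained; the paper's is structured so as to reuse the $\alpha$-machinery and the combinatorial identity that also drive its variance bound and its Lemma \ref{lem:vorcellorder}, which is a reasonable choice for internal consistency but not a necessity here. One small bookkeeping point: the final constant in your second Cauchy--Schwarz step should be $\lambda(S^*)^2$ rather than $\lambda(S^*)$, but this does not affect the rate.
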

\begin{proof}[Proof of Lemma \ref{lem:biasone}]
For $\alpha = \alpha(z,z',J,J')$ as in \eqref{eq:alphadef1}, with the argument leading to \eqref{eq:var.4} we have that
\begin{align*}
&   \E\big[1_{C(J)}(z)\, 1_{C(J')}(z')\, \|z - Z_{(K)}(z)\|^\beta \,\|z' - Z_{(K)}(z')\|^\beta \,  \big]\\
\leq \, & \mathbb{E}\,\Big[ \alpha^{2\, \beta}  \exp\big(-(n-\#J\cup J') \cdot C_\alpha \cdot \min\{\rho',\alpha\}^d  \big)\Big]\\
\leq \, & (\rho'')^{2 \beta}\, \exp\big(- n^*\, (\rho')^d \big) + \mathbb{E}\,\big[ \alpha^{2\, \beta}  \exp\big(-n^* \cdot \alpha^d  \big)\big],
\end{align*}
where as before $C_\alpha = \rho \cdot \pi^{d/2}/ \Gamma(d/2+1)$ and $n^* = (n-2K)\, C_\alpha$.
Now
\allowdisplaybreaks
\begin{align*}
& \mathbb{E}\,\big[ \alpha^{2\, \beta}  \exp\big(-n^* \cdot \alpha^d  \big)\big] \\
= & \int_{S^{J \cup J'}}\, \max_{j\in J\cup J'}\|z-z_j\|^{2 \beta} \, \cdot \exp\big( -n^*\,\max_{j\in J\cup J'}\|z-z_j\|^d\big)\, \prod_{j \in J\cup J'} f_Z(z_j) \dd z_j\, \\
\leq & \, (\bar \rho)^{2K}\, \int_{(\R^d)^{J \cup J'}}\,\max_{j \in {J \cup J'}} \|u_j\|^{2 \beta}\,  \cdot \exp\big( -n^*\,\max_{j \in {J \cup J'}} \|u_j\|^d\big)\, \prod_{j \in J\cup J'} \dd u_j\,\\
\leq & \, (\bar \rho d C_d)^{2K}\, \int_{(0,\infty)^{J \cup J'}}\,\max_{j \in {J \cup J'}} r_j^{2 \beta}\,  \cdot \exp\big( -n^*\,\max_{j \in {J \cup J'}} r_j^d\big)\, \prod_{j \in J\cup J'} r_j^{d-1}\, \dd r_j\,\\
=\, & (\bar \rho d C_d)^{2K}\, (n^*)^{-2\beta/d}\,(n^*)^{-\#(J \cup J')\, (d-1)/d}\,\,(n^*)^{- \#(J \cup J')\,/d}  \\
& \qquad \qquad \cdot \int_{(0,\infty)^{J \cup J'}}\,\max_{j \in {J \cup J'}} r_j^{2 \beta}\,  \cdot \exp\big( - \max_{j \in {J \cup J'}} r_j^d\big)\, \prod_{j \in J\cup J'} r_j^{d-1}\, \dd r_j\,\\
%\leq & (\bar \rho d C_d)^{2K}\, (n^*)^{-2(L+1)/d}\,(n^*)^{-2K}\,  
%\int_{(0, \infty)^K}\, \int_{(0, \infty)^K}\, \big\{\max_{j=1, \ldots, K}\max\big(r_j, s_j\big\}^{2(L+1)}\, \\
%& \quad \cdot \exp\Big( -\big\{(r_1 + \ldots + r_K + s_1 + \ldots + s_K )/(2K)\big\}^d\Big)\, \prod_{j=1}^K r_j^{d-1} \, s_j^{d-1}\,  \dd r_1\, \ldots  \dd r_K\, \dd s_1\, \ldots  \dd s_K\\
\leq & \, \text{const.}\cdot  n^{-2\beta/d}\,n^{- \#(J \cup J')}\,,  
\end{align*}
where $C_d := \pi^{d/2}/\Gamma(d/2+1)$, so that observing  as in \eqref{eq:sumbinom},  
$$ \sum_{J,J'\in {\cal J}_K} \, n^{-\# J\cup J'} \leq 2^K/K!\,
$$
concludes the proof of the lemma.  
\end{proof}

\bigskip

\textit{Step 2: Taylor expansion for $L=l \geq 1$}

Now suppose that $ G \in \cG(l, \beta, \CH)$, $l \geq 1$ and that $L=l$.

The Taylor expansion of $G$ in $z$, evaluated at $Z_j$ up to order $L$ can be written as
$$G(Z_j) = \xi(z,Z_j)^\top \, {\cal G}(z) + {\cal R}(z,Z_j),$$
where
\begin{align*}
{\cal G}(z) &  :=  \big(\partial_{\kappa} G(z) \, / \, \kappa!\big)_{\kappa\in {\cal K}}\,, \\
%{\cal G}(z) &  :=  \Big(\frac{\partial^{|\kappa|} G}{\partial z^\kappa}(z) \, / \, \kappa!\Big)_{\kappa\in {\cal K}}\,, \\
{\cal R}(z,Z_j) &  :=  L\,  \sum_{|\kappa'|=L} \frac{\xi_{\kappa'}(z,Z_{j})}{\kappa'!}  \cdot \int_0^1 (1-s)^{L-1} \Big(\partial_{\kappa'}G\big(z + s(Z_{j}-z)\big) - \partial_{\kappa'}G(z)\Big) \, \dd s.
%{\cal R}(z) &  :=  (L+1) \sum_{|\kappa'|=L+1} \frac1{\kappa'!} \sum_{j\in J} \xi_{\kappa'}(z,Z_{j}) \cdot \int_0^1 (1-s)^L \frac{\partial^{L+1}G}{\partial z^{\kappa'}}\big(z + s(Z_{j}-z)\big) \, ds \cdot \big\{\xi_\kappa(z,Z_{j})\big\}_{\kappa\in{\cal K}}.
\end{align*}
Therefore, observing ${\bf e}_0^\top {\cal G}(z) = G(z)$ and \eqref{eq:estimagexp} we obtain that
\begin{align*}
& \mathbb{E}\big[ \hat{G}(z) | \sigma_Z\big]  = \, \sum_{j\in J(z)}   {\bf e}_0^\top \, {\cal M}(z)^{-1} \, \xi(z,Z_{j}) \, G(Z_j)\\
= \, & G(z) +    \sum_{j\in J(z)} \, {\bf e}_0^\top \, {\cal M}(z)^{-1} \, \xi(z,Z_{j}) \, {\cal R}(z,Z_j)
\end{align*}
Therefore, it holds that
\begin{align*}
\mathbb{E}[\hat{\Psi} | \sigma_Z] - \Psi & \, = \, \sum_{J\in {\cal J}_K} \int_{C(J)} \sum_{j\in J} \, {\bf e}_0^\top \, {\cal M}(z)^{-1} \, \xi(z,Z_{j}) \, {\cal R}(z,Z_j)  \, \dd z\,,
\end{align*}
almost surely, where we have used that the $C(J)$, $J\in {\cal J}_K$, are pairwise disjoint and form an almost-partition of $S^*$; concretely, $S^* \backslash \bigcup_{J\in {\cal J}_K} C(J)$ is included in 
$$ \bigcup_{j,k=1}^n \big\{z \in S^* \mid \|z - Z_k\| = \|z - Z_j\|\big\}\,, $$
as a subset, which is a $d$-dimensional Borel set with the Lebesgue measure zero as $Z_1$ has a Lebesgue density $f_Z$. 
Using the Cauchy-Schwarz inequality and the multinomial theorem twice we deduce that
\begin{align*}
\Big| & \sum_{j\in J(z)} \, {\bf e}_0^\top \, {\cal M}(z)^{-1} \, \xi(z,Z_{j}) \, {\cal R}(z,Z_j) \Big| \\
\leq\, &  \CH\, \Big\{\sum_{j\in J(z)} \|z - Z_j\|^{2 \beta}\, \Big(\sum_{|\kappa'|=L} \frac1{\kappa'!} \xi_{\kappa'}(z,Z_j)^2\Big)\sum_{|\kappa'|=L} \frac1{\kappa'!}\Big\}^{1/2} \\ & \hspace{5cm} \cdot \Big(\sum_{j\in J(z)} \big|{\bf e}_0^\top {\cal M}(z)^{-1} \xi(z,Z_{j})\big|^2\Big)^{1/2} \\
\leq  \, & \CH\, \, \frac{d^{L/2}}{L!} \, \Big( \sum_{j\in J(z)} \|Z_{j} - z\|^{2(L+\beta)}\Big)^{1/2} \big({\bf e}_0^\top {\cal M}(z)^{-1} {\bf e}_0\big)^{1/2}\,\\
\leq  \, & \CH\, \, \frac{d^{L/2}K^{1/2}}{L}! \,  \|Z_{(K)}(z) - z\|^{L+\beta}\, \big({\bf e}_0^\top {\cal M}(z)^{-1} {\bf e}_0\big)^{1/2}.
\end{align*}

%With respect to the bias term we consider that
%$$ \mathbb{E}(\hat{\Psi} | \sigma_Z) \, = \, \sum_{J \in {\cal J}_K} \int_{C(J)} \, {\bf e}_0^\top {\cal M}(z)^{-1} \mathbb{E}(\hat{\cal G}(z) | \sigma_Z) \, dz\,. $$
%Taylor approximation of the function $G$ around $z \in C(J)$ yields that
%\begin{align*}
%\mathbb{E}(\hat{\cal G}(z) | \sigma_Z) & \, = \, \Big\{\sum_{j\in J} G(Z_{j}) \cdot \xi_{\kappa}(z,Z_{j})\Big\}_{\kappa \in {\cal K}} \, = \, {\cal M}(z) {\cal G}(z) \, + \, {\cal R}(z)\,,
%\end{align*}
%where

%Hence,
%\begin{align*}
% {\bf e}_0^\top {\cal M}(z)^{-1} \mathbb{E}(\hat{\cal G}(z) | \sigma_Z) & \, = \, G(z) \, + \, {\bf e}_0^\top {\cal M}(z)^{-1} {\cal R}(z)\,. \end{align*}
%for all $z\in C(J)$. 
Thus the expected squared conditional bias is bounded from above as follows.
\begin{align} \nonumber 
\mathbb{E}  \big[\big|\mathbb{E}(\hat{\Psi} | \sigma_Z) - \Psi\big|^2 \big]\, 
& \, \leq \,  \CH^2 \, \frac{d^L K}{L!^2} \, \sum_{J,J'\in {\cal J}_K} \iint_{(S^*)^2} \mathbb{E} \Big[\, {\bf 1}_{C(J)}(z) {\bf 1}_{C(J')}(z') \cdot \nonumber\\ 
\big({\bf e}_0^\top {\cal M}_J(z)^{-1} {\bf e}_0\big)^{1/2}\, & \big({\bf e}_0^\top {\cal M}_{J'}(z')^{-1} {\bf e}_0\big)^{1/2} \,  \|Z_{(K)}(z) - z\|^{L+\beta}\,  \|Z_{(K)}(z') - z'\|^{L+\beta} \, \Big]\, \dd z\, \dd z'\nonumber \\
&\, \leq \,  \CH^2 \, \frac{d^L K}{L!^2} \, \sum_{J,J'\in {\cal J}_K} \iint_{(S^*)^2} \mathbb{E} \Big[\,\big({\bf e}_0^\top {\cal M}_J(z)^{-1} {\bf e}_0\big)^{1/2}\,  \,\nonumber \\ 
& \hspace{1cm} \big({\bf e}_0^\top {\cal M}_{J'}(z')^{-1} {\bf e}_0\big)^{1/2} \alpha^{2(L+\beta)}  \exp\big(-n^* \cdot \min\{\rho',\alpha\}^d  \big)\,\Big]\, \dd z\, \dd z'\label{eq:intbiasest}
\end{align}
with $\alpha = \alpha(z,z',J,J')$ as in \eqref{eq:alphadef1}, and again $n^* = (n-2K)\, C_\alpha$ and using the argument leading to \eqref{eq:var.4}. Using the Cauchy Schwarz inequality,
\begin{align*}
& \mathbb{E} \Big[\,\big({\bf e}_0^\top {\cal M}_J(z)^{-1} {\bf e}_0\big)^{1/2}\, \big({\bf e}_0^\top {\cal M}_{J'}(z')^{-1} {\bf e}_0\big)^{1/2} \,  \alpha^{2\, (L+\beta)}  \exp\big(-n^* \cdot \min\{\rho',\alpha\}^d  \big)\,\Big]\\
\leq\, & \Big(\mathbb{E} \Big[\,\big({\bf e}_0^\top {\cal M}_J(z)^{-1} {\bf e}_0\big)\,   \alpha^{2\, (L+\beta)}  \exp\big(-n^* \cdot \min\{\rho',\alpha\}^d  \big)\,\Big]\Big)^{1/2}\\
& \cdot \Big(\mathbb{E} \Big[\, \big({\bf e}_0^\top {\cal M}_{J'}(z')^{-1} {\bf e}_0\big)\,   \alpha^{2\, (L+\beta)}  \exp\big(-n^* \cdot \min\{\rho',\alpha\}^d  \big)\,\Big]\Big)^{1/2}
\end{align*}

Now the conditioning argument from step 3 in Section \ref{sec:expcondvar} can again be applied to eliminate the terms ${\bf e}_0^\top {\cal M}_{J}(z)^{-1} {\bf e}_0$ and reduce to the setting with $L=0$:

Let $\widehat{j}$ denote the smallest $j\in J$ such that $\|z-Z_j\| \geq \|z-Z_k\|$ for all $k\in J$ and define 
$$\alpha' := \max_{j \in J'\backslash J} \|z-Z_j\|,$$
where now possibly $J \cap J'=\emptyset$. 
The $\sigma$-fields generated by $\widehat{j}$, $Z_{\widehat{j}}$, $\alpha'$, on the one hand, and by $\widehat{j}$, $Z_{\widehat{j}}$, on the other hand, are again called $\mathfrak{A}'$ and $\mathfrak{A}$, respectively. 
Note that with this notation we can write $\alpha$ as 
$$\alpha = \max\{\|z-Z_{\widehat{j}}\|,\alpha'\}. $$
and that the random variable $\alpha$ is measurable with respect to $\mathfrak{A}'$. Then as in \eqref{eq:ff}, 
\begin{align*}
& \mathbb{E}\,\Big[ \big({\bf e}_0^\top {\cal M}_{J}(z)^{-1} {\bf e}_0\big) \,\,   \alpha^{2\, (L+\beta)}  \exp\big(-n^* \cdot \min\{\rho',\alpha\}^d  \big)\,\Big]\nonumber \\
\, = \, & \mathbb{E}\,\Big[ \mathbb{E}\,\big[\big({\bf e}_0^\top {\cal M}_{J}(z)^{-1} {\bf e}_0\big) \mid \mathfrak{A}'\big] \, \,   \alpha^{2\, (L+\beta)}  \exp\big(-n^* \cdot \min\{\rho',\alpha\}^d  \big)\,\Big]\nonumber \\
\, = \, & \mathbb{E}\,\Big[ \mathbb{E}\,\big[\big({\bf e}_0^\top {\cal M}_{J}(z)^{-1} {\bf e}_0\big) \mid \mathfrak{A}\big] \, \,   \alpha^{2\, (L+\beta)}  \exp\big(-n^* \cdot \min\{\rho',\alpha\}^d  \big)\,\Big]\\
\, \leq \, & C_3\, \mathbb{E}\,\Big[    \alpha^{2\, (L+\beta)}  \exp\big(-n^* \cdot \min\{\rho',\alpha\}^d  \big)\,\Big]
%
%
%\Big\{ \mathbb{E}\, \mathbb{E}\big({\bf e}_0^\top {\cal M}_J(z)^{-1} {\bf e}_0 \mid \mathfrak{A}'\big) \, \exp\big\{-(n-\#J\cup J') \cdot \rho \cdot \pi^{d/2} \cdot \min\{\rho',\alpha^*\}^d / \Gamma(d/2+1)\big\}\Big\}^{1/2}\,.
\end{align*}
where in the second step we used that the random vector $\big({\bf e}_0^\top {\cal M}_J(z) {\bf e}_0,\widehat{j},Z_{\widehat{j}}\big)$ and $\alpha'$ are independent, and in the last step we applied Lemma \ref{L:u0bound} with $\eta = 1$.  The argument is completed as in step 1. \hfill  $\qed$

\subsection{Proof of Theorem \ref{th:theoremparratematch}}\label{sec:proofmatchingest}

Let $\sigma_{X,Z}$ denote the $\sigma$-field generated by $X_1, \ldots, X_m,$\linebreak $Z_1, \ldots, Z_n$, and let $\sigma_{Z}$ denote the $\sigma$-field generated by $Z_1, \ldots, Z_n$ only. 
Then we have the decomposition
\begin{align}\label{eq:condvarbias2step}
\E \big[\big|\widehat{\Phi} - \Phi\big|^2\big] &  \, = \, \E \big[\mbox{var}\big(\widehat{\Phi} | \sigma_{X,Z}\big)\big] \, + \, \E \big[\mbox{var}\big(\E[\widehat{\Phi} | \sigma_{X,Z}] | \sigma_Z\big)\big] \,+\, \E \big[\big|\E[\widehat{\Phi} | \sigma_Z] - \Phi\big|^2\big] \,.
\end{align}
We shall show the following bounds:

\begin{align}
\sup_{G \in \cG(l, \beta, \CH, C_G)} \E \big[\mbox{var}\big(\widehat{\Phi} | \sigma_{X,Z}\big)\big] & \leq \CV \cdot C \cdot \big(  n^{-1} + m^{-1}\big),\label{eq:boundsth21}\\
\sup_{G \in \cG(l, \beta, \CH, C_G)} \E \big[\mbox{var}\big(\E[\widehat{\Phi} | \sigma_{X,Z}] | \sigma_Z\big)\big] & \leq C_G^2 \cdot C \cdot  m^{-1},\label{eq:boundsth22}\\
\sup_{G \in \cG(l, \beta, \CH, C_G)} \E \big[\big|\E[\hat{\Phi} | \sigma_Z] - \Phi\big|^2 \big] & \leq \CH^2 \cdot C\cdot  n^{-\frac{2(l + \beta)}{d}},\label{eq:boundsth23}
\end{align} 
where the constant $C>0$ only depends on $L,K,\rho, \rho',\rho'',\bar \rho, d, \lambda (S^*)$ and $\Cf$. These bounds imply \eqref{eq:riskboundmatch}, and hence the statement of the theorem. 

\medskip

\begin{proof}[Proof of \eqref{eq:boundsth21}]: By conditional independence if $J \cap J' = \emptyset$,
\begin{align*}
& \mbox{var}\big(\widehat{\Phi} | \sigma_{X,Z}\big) =  \frac1{m^2}\, \sum_{k,k'=1}^m\, \sum_{J,J' \in \cJ_K} \,{\bf 1} \big(J \cap J' \not= \emptyset\big) \\
& \hspace{5cm}\,{\bf 1} \big(X_k \in C(J), X_{k'} \in C(J')\big)\, \mbox{cov}\Big(\widehat{G}(X_k), \widehat{G}(X_{k'}) | \sigma_{X,Z}\Big).
\end{align*}
Then 
\begin{align}
\E\Big[\mbox{var}\big(\widehat{\Phi} | \sigma_{X,Z}\big) | \sigma_Z\Big]  \, = \, & \frac{1}{m}\, \int_{S^*} \mbox{var}\big( \widehat{G}(z)  \mid \sigma_Z\big)\,f(z)\,  \dd z \nonumber \\  
+ \frac{m (m-1)}{m^2}\,  \sum_{J,J'\in {\cal J}_K} & {\bf 1}\big(J\cap J' \neq \emptyset\big) \int_{C(J)} \int_{C(J')} \mbox{cov}\big( \widehat{G}(z) , \widehat{G}(z') \mid \sigma_Z\big)\,f(z)\, f(z') \, \dd z \, \dd z'.\label{eq:condtwovar}
\end{align}
Since
\begin{align} \nonumber
& \sum_{J,J'\in {\cal J}_K} {\bf 1}\{J\cap J' \neq \emptyset\} \int_{C(J)} \int_{C(J')} \mbox{cov}\big( \widehat{G}(z) , \widehat{G}(z') \mid \sigma_Z\big)\,f(z)\, f(z') \,   \dd z \, \dd z' \\  
\leq \, & \Cf^2\, \sum_{J,J'\in {\cal J}_K} {\bf 1}\{J\cap J' \neq \emptyset\} \, \int_{C(J)} \big\{\mbox{var}\big(\widehat{G}(z) | \sigma_Z\big)\big\}^{1/2} \dd z  \cdot  \int_{C(J')} \big\{\mbox{var}\big(\widehat{G}(z') | \sigma_Z\big)\big\}^{1/2} \dd z',
\end{align}
the expected value of the second term in \eqref{eq:condtwovar} can be upper bounded as the conditional variance term in \eqref{eq:boundsth}, see \eqref{eq:var.0}. For the first term in \eqref{eq:condtwovar}, we use the bound \eqref{eq:boundcondvar} on $\mbox{var}(\widehat{G}(z) | \sigma_Z)$ as well as Lemma \ref{L:u0bound} to obtain that $\E[\mbox{var}(\widehat{G}(z) | \sigma_Z)]$ is bounded from above uniformly in $z$. This proves \eqref{eq:boundsth21}. 

\end{proof}

\begin{proof}[Proof of \eqref{eq:boundsth22} and \eqref{eq:boundsth23}]

Write
\begin{equation}\label{eq:condexpec}
\bar G(z;Z) = \E\big[\hat G(z) | \sigma_{Z}\big] = \, \sum_{j\in J(z)}   {\bf e}_0^\top \, {\cal M}(z)^{-1} \, \xi(z,Z_{j}) \, G(Z_j) .
\end{equation} 

Then
$$\E\big[\hat G(X_k) | \sigma_{X,Z}\big] = \bar G(X_k;Z),$$
and therefore
$$\E[\widehat{\Phi} | \sigma_{X,Z}] =  \frac1m\, \sum_{k=1}^m\, \sum_{J \in \cJ_K} \, 1\big(X_k \in C(J)\big)\, \bar G(X_k;Z).$$
Further, 
$$\big|\E[\widehat{\Phi} | \sigma_{Z}] - \Phi \big| = \Big|    \int_{S^*} \, \big(\E\big[\hat G(z) | \sigma_{Z}\big] - G(z)\big)\, f(z)\, \rd z \Big|  \leq \Cf\,   \int_{S^*} \, \big|\E\big[\hat G(z) | \sigma_{Z}\big] - G(z)\big|\, \rd z ,$$
which can be bounded as the conditional bias term in \eqref{eq:boundsth} so that \eqref{eq:boundsth23} follows. 
Moreover, 
\begin{align*}
\mbox{var}\big(\E[\widehat{\Phi} | \sigma_{X,Z}] | \sigma_Z\big) & \, = \, \frac1m\, \mbox{var}\Big(\sum_{J \in \cJ_K} \, 1\big(X_1 \in C(J)\big)\, \bar G(X_1;Z) | \sigma_Z\Big)\\
& \, \leq \, \frac1m\, \int_{S^*}\,  \big( \E\big[\hat G(z) | \sigma_{Z}\big] \big)^2\, f(z) \, \rd z.
\end{align*}
Using the Cauchy-Schwarz inequality in \eqref{eq:condexpec} we obtain 
$$\big( \E\big[\hat G(z) | \sigma_{Z}\big] \big)^2 \leq K\, C_G^2 \,\cdot \, \sum_{j\in J(z)} \big|{\bf e}_0^\top {\cal M}(z)^{-1} \xi(z,Z_{j})\big|^2 =  K\, C_G^2\, \cdot \,{\bf e}_0^\top {\cal M}(z)^{-1}\, {\bf e}_0.$$
Using Lemma \ref{L:u0bound} proves \eqref{eq:boundsth22} and hence concludes the proof of the Theorem.  
\end{proof}

\begin{funding}
% The first author was supported by ...
%
A.~Meister is supported by the Research Unit 5381 (DFG), ME 2114/5-1.
\end{funding}

\begin{acks}[Acknowledgments]
The authors are grateful to the editors and the referees for their detailed and constructive comments.
\end{acks}

%\begin{supplement}
%In the supplement \cite{hmsupp} we provide the proof of Theorem \ref{T:low}, Lemma \ref{L:u0bound}, the statement of Remark \ref{rem:highmom} and Theorem \ref{th:ratedeconv} from the main paper, as well as further technical material and additional simulation scenarios.   \end{supplement}

	\appendix

	\section{Proof of Theorem \ref{T:low}} \label{Section:low}
	
	%\hajo{Reihenfolge u und z tauschen}\alex{erledigt}
	
	\begin{proof}[Proof of Theorem \ref{T:low}:] Set $I_k' := [k/M,(k+1/2)/M]$, $I_k'' := ((k+1/2)/M,(k+1)/M)$, $k=0,\ldots, M-1$ for some integer $M>1$. Moreover, let
		$$ \varphi_k \, := \, (1+\theta_k\alpha) 1_{I_k'} + (1-\theta_k\alpha) 1_{I_k''}\,, $$
		for some $\alpha > 0$ and $\theta = (\theta_0,\ldots,\theta_{M-1}) \in \{0\} \cup \{-1,1\}^M$; and
		$$ f_\theta \, := \, \sum_{k=0}^{M-1} \varphi_k\,. $$
		Now write $f^{(n)}$ for the $n$-fold product density of a density $f$; and let $\hat{\theta} = (\hat{\theta}_0,\ldots,\hat{\theta}_{M-1})$ be a random vector with i.i.d. components which satisfy $P[\hat{\theta}_k=1]=P[\hat{\theta}_k=-1] = 1/2$ for all $k=0,\ldots,M-1$. The densities $f_{\hat{\theta}}$ and $f_0$ are used as candidates for the design density $f_Z$ where $\alpha$ is chosen sufficiently small such that $f_{\hat{\theta}}, f_0 \in {\cal F}$ are guaranteed. Put $x=(x_1,\ldots,x_n)$ and 
		\begin{align*}
			N_k^+(x) & \, := \, \sum_{j=1}^n 1_{\{1\}}(\hat{\theta}_k) \cdot 1_{I_k'}(x_j) +  1_{\{-1\}}(\hat{\theta}_k) \cdot 1_{I_k''}(x_j) \,, \\
			N_k^-(x) & \, := \, \sum_{j=1}^n 1_{\{1\}}(\hat{\theta}_k) \cdot 1_{I_k''}(x_j) +  1_{\{-1\}}(\hat{\theta}_k) \cdot 1_{I_k'}(x_j) \,. 
		\end{align*}
		Consider that
		\begin{align} \nonumber
			f_{\hat{\theta}}^{(n)}(x) & \, = \, \prod_{k=0}^{M-1} (1+\alpha)^{N_k^+(x)}\cdot (1-\alpha)^{N_k^-(x)}\,.
		\end{align}
		
		The likelihood function $\eta_\theta^{(n)}$ equals
		$$ \eta_\theta^{(n)}({\bf u},{\bf z}) \, = \, \prod_{j=1}^n f_\theta(z_j) f_\varepsilon\big(u_j - h_\theta(z_j)\big)\,, $$
		with ${\bf u} = (u_1,\ldots,u_n)$ and ${\bf z} = (z_1,\ldots,z_n)$, where we choose the regression function $h_\theta =  \beta \cdot F(f_\theta)$ for the regression function where the function $F$ satisfies $F(1)=1$ but, apart from that, remains to be selected; and $f_\varepsilon$ denotes the standard normal density. The parameter $\beta>0$ is chosen sufficiently small such that $h_{\hat{\theta}}, h_0 \in {\cal H}$. Writing 
		$$ I_k^{+} \, := \, \begin{cases} I_k'\,, & \mbox{ if }\hat{\theta}_k=1\,, \\
			I_k''\,, & \mbox{ if }\hat{\theta}_k=-1\,, \end{cases} $$
		and
		$$ I_k^{-} \, := \, \begin{cases} I_k'\,, & \mbox{ if }\hat{\theta}_k=-1\,, \\
			I_k''\,, & \mbox{ if }\hat{\theta}_k=1\,, \end{cases} $$
		we deduce that
		\allowdisplaybreaks
		\begin{align*}
			\big|\mathbb{E} & \eta_{\hat{\theta}}^{(n)}({\bf u},{\bf z})\big|^2 \\ & \, = \, \big[f_\varepsilon^{(n)}({\bf u})\big]^2 \cdot \Big| \prod_{k=0}^{M-1} \mathbb{E} (1+\alpha)^{N_k^+({\bf z})} (1-\alpha)^{N_k^-({\bf z})} \, \\
			& \hspace{3.7cm} \cdot \exp\Big\{-\frac12 \beta^2 \big[F^2(1+\alpha) N_k^+({\bf z}) + F^2(1-\alpha) N_k^-({\bf z})\big]\Big\} \\ 
			& \hspace{3.7cm} \cdot \exp\Big\{ \beta F(1+\alpha) \sum_{j=1}^n 1_{I_k^+}(z_j) u_j + \beta F(1-\alpha) \sum_{j=1}^n 1_{I_k^-}(z_j) u_j\Big\}\Big|^2 \\ & \, = \, \big[f_\varepsilon^{(n)}({\bf u})\big]^2 \cdot \prod_{k=0}^{M-1} \Big\{\frac14 (1+\alpha)^{2N_k'({\bf z})} (1-\alpha)^{2N_k''({\bf z})} \,\\
			& \hspace{3.7cm} \cdot \exp\Big[-\beta^2 \big[F^2(1+\alpha) N_k'({\bf z}) + F^2(1-\alpha) N_k''({\bf z})\big]\Big] \\ 
			& \hspace{3.7cm} \cdot \exp\Big[ 2\beta F(1+\alpha) \sum_{j=1}^n 1_{I_k'}(z_j) u_j + 2\beta F(1-\alpha) \sum_{j=1}^n 1_{I_k''}(z_j) u_j\Big] \\
			& \hspace{1cm} + \, \frac14 (1+\alpha)^{2N_k''({\bf z})} (1-\alpha)^{2N_k'({\bf z})} \, \exp\Big[-\beta^2 \big[F^2(1+\alpha) N_k''({\bf z}) + F^2(1-\alpha) N_k'({\bf z})\big]\Big] \\ & \hspace{3.7cm} \cdot \exp\Big[ 2\beta F(1+\alpha) \sum_{j=1}^n 1_{I_k''}(z_j) u_j + 2\beta F(1-\alpha) \sum_{j=1}^n 1_{I_k'}(z_j) u_j\Big] \\
			& \hspace{1cm} + \, \frac12 (1-\alpha^2)^{N_k({\bf z})} \exp\Big[-\frac{\beta^2}2 \big(F^2(1+\alpha) + F^2(1-\alpha)\big) \cdot N_k({\bf z})\Big] \\ & \hspace{3.7cm} \cdot \exp\Big[\beta \big[F(1+\alpha)+F(1-\alpha)\big]\, \sum_{j=1}^n 1_{I_k}(z_j) u_j\Big]\Big\}\,,
		\end{align*}
		where $N_k'(x) := \sum_{j=1}^n 1_{I_k'}(x_j)$, $N_k''(x) := \sum_{j=1}^n 1_{I_k''}(x_j)$ and $N_k(x) := N_k'(x) + N_k''(x)$. On the other hand,
		$$ \eta_0^{(n)}({\bf u},{\bf z}) \, = \, f_\varepsilon^{(n)}({\bf u}) \cdot \exp(-\beta^2 n / 2) \cdot \exp\Big(\beta \sum_{j=1}^n u_j\Big)\,, $$
		on ${\bf z}\in [0,1]^n$ so that
		\begin{align*}
			\int &\big|\mathbb{E}  \eta_{\hat{\theta}}^{(n)}({\bf u},{\bf z})\big|^2 /  \eta_0^{(n)}({\bf u},{\bf z}) d{\bf u} \\ 
			& \, = \, \exp(\beta^2n/2) \cdot \prod_{k=0}^{M-1} \Big\{\frac14 (1+\alpha)^{2N_k'({\bf z})} (1-\alpha)^{2N_k''({\bf z})} \,\\
			& \hspace{3.7cm} \cdot \exp\Big[-\beta^2 \big[F^2(1+\alpha) N_k'({\bf z}) + F^2(1-\alpha) N_k''({\bf z})\big]\Big] \\ 
			& \hspace{3.7cm} \cdot \exp\Big[\frac{\beta^2}2 \big[2F(1+\alpha)-1\big]^2 N_k'({\bf z}) + \frac{\beta^2}2 \big[2F(1-\alpha)-1\big]^2 N_k''({\bf z})\Big] \\ 
			& \hspace{1cm} + \, \frac14 (1+\alpha)^{2N_k''({\bf z})} (1-\alpha)^{2N_k'({\bf z})} \, \exp\Big[-\beta^2 \big[F^2(1+\alpha) N_k''({\bf z}) + F^2(1-\alpha) N_k'({\bf z})\big]\Big] \\ & \hspace{3.7cm} \cdot  \exp\Big[\frac{\beta^2}2 \big[2F(1+\alpha)-1\big]^2 N_k''({\bf z}) + \frac{\beta^2}2 \big[2F(1-\alpha)-1\big]^2 N_k'({\bf z})\Big] \\
			& \hspace{1cm} + \, \frac12 (1-\alpha^2)^{N_k({\bf z})}  \exp\Big[-\frac{\beta^2}2 \big(F^2(1+\alpha) + F^2(1-\alpha)\big) \cdot N_k({\bf z})\Big] \\ 
			& \hspace{3.7cm} \cdot \exp\Big[\frac{\beta^2}2 \big(F(1+\alpha)+F(1-\alpha)-1\big)^2\cdot N_k({\bf z})\Big]\Big\}\,.
		\end{align*}
		Now we integrate this term with respect to ${\bf z}$. Thus consider ${\bf Z}$ as an $n$-dimensional random vector which consists of i.i.d. components that are uniformly distributed on $[0,1]$. Given $(N_0({\bf Z}),\ldots,N_{M-1}({\bf Z}))$, the random variable $N_k'({\bf Z})$ are conditionally independent and conditionally binomially distributed with the parameters $N_k({\bf Z})$ and $1/2$. Therefore,
		\begin{align} \nonumber
			\iint &\big|\mathbb{E}  \eta_{\hat{\theta}}^{(n)}({\bf u},{\bf z})\big|^2 /  \eta_0^{(n)}({\bf u},{\bf z}) d{\bf u}\, d{\bf z} \\  \nonumber & \, = \, \mathbb{E} \prod_{k=0}^{M-1} \big(\gamma_0^{N_k({\bf Z})}/2 + \gamma_1^{N_k({\bf Z})}/2\big) \\  \nonumber
			& \, = \, 2^{-M} \sum_{{\cal N} \subseteq \{0,\ldots,M-1\}} \mathbb{E}\, \gamma_0^{\sum_{k\in {\cal N}} N_k({\bf Z})} \gamma_1^{n - \sum_{k\in {\cal N}} N_k({\bf Z})} \\ \label{eq:LLB1.2}
			& \, = \,  \mathbb{E}\, \Big\{\gamma_0 \frac{B}M + \gamma_1 \Big(1 - \frac{B}M\Big)\Big\}^n\,,
		\end{align}
		where $B \sim {\bf B}(M,1/2)$ and 
		\begin{align*}
			\gamma_0 & \, := \, \frac12(1+\alpha)^2 \exp\big\{ \beta^2 \big[1-F(1+\alpha)\big]^2\big\} + \frac12(1-\alpha)^2 \exp\big\{ \beta^2 \big[1-F(1-\alpha)\big]^2\big\}\,, \\
			\gamma_1 & \, := \, (1-\alpha^2) \, \exp\big\{\beta^2 \big[1 - F(1+\alpha)\big]\cdot \big[1-F(1-\alpha)\big]\big\}\,.
		\end{align*}
		Then, (\ref{eq:LLB1.2}) equals
		\begin{align*}
			\Big(&\frac12 \gamma_0 + \frac12 \gamma_1\Big)^n \cdot \mathbb{E} \, \Big\{1 + \frac{2\gamma_0-2\gamma_1}{\gamma_0+\gamma_1} \cdot\Big(\frac{B}M - \frac12\Big)\Big\}^n \\
			& \, \leq \, \Big(\frac12 \gamma_0 + \frac12 \gamma_1\Big)^n \cdot \exp\Big\{\frac{(\gamma_0-\gamma_1)^2}{2(\gamma_0+\gamma_1)^2} \cdot\frac{n^2}M\Big\}\,.
		\end{align*}
		By Taylor expansion we obtain that
		\begin{align*} 
			\gamma_0/2 + \gamma_1/2 & \, = \, \frac14 \sum_{k=0}^\infty \frac1{k!} \, \beta^{2k} \, \Big((1+\alpha) \big[1-F(1+\alpha)\big]^{k} + (1-\alpha) \big[1-F(1-\alpha)\big]^{k}\Big)^2\,, \\
			\gamma_0-\gamma_1 & \, = \, \frac12 \sum_{k=0}^\infty \frac1{k!} \, \beta^{2k} \, \Big((1+\alpha) \big[1-F(1+\alpha)\big]^{k} - (1-\alpha) \big[1-F(1-\alpha)\big]^{k}\Big)^2\,.
		\end{align*}
		Thus the $\chi^2$-distance  between $\mathbb{E}\eta_{\hat{\theta}}^{(n)}$ and $\eta_{0}^{(n)}$ admits the following upper bound
		$$ \chi^2\big(\mathbb{E} \eta_{\hat{\theta}}^{(n)}, \eta_{0}^{(n)}\big) \, \leq \, \exp(c+c^2/16) \, - \, 1\,, $$
		for all $n$, $M$ whenever 
		\begin{align} \label{LowBou.Cond1}
			\sum_{k=1}^\infty \frac1{k!} \, \beta^{2k} \, \Big((1+\alpha) \big[1-F(1+\alpha)\big]^{k} + (1-\alpha) \big[1-F(1-\alpha)\big]^{k}\Big)^2 & \, \leq \, c/n \,, \\ \label{LowBou.Cond2}
			\sum_{k=0}^\infty \frac1{k!} \, \beta^{2k} \, \Big((1+\alpha) \big[1-F(1+\alpha)\big]^{k} - (1-\alpha) \big[1-F(1-\alpha)\big]^{k}\Big)^2 & \, \leq \, c\cdot n^{-1} M^{1/2} \,, 
		\end{align}
		for all $n$, $M$, where the constant is $c>0$ sufficiently small. 
		
		Impose that $M$ is an integer multiple of $4$. Then,
		\begin{align} \label{eq:target}
			\Big|\int_{1/4}^{3/4} h_{\hat{\theta}} - \int_{1/4}^{3/4} h_0 \Big|^2 & \, = \, \frac1{16} \beta^2 \, \big(F(1+\alpha) + F(1-\alpha) - 2\big)^2\,,
		\end{align}
		holds true almost surely. Now we specify that $F(x) = 1/x$, which provides the lower bound $\beta^2\alpha^4$ on (\ref{eq:target}) (up to a constant factor). Furthermore the left side of the inequality (\ref{LowBou.Cond1}) is bounded from above by $\alpha^4\beta^4$ (again up to some constant factor), while (\ref{LowBou.Cond2}) can always be satisfied by selecting $M$ sufficiently large. Note that there are no smoothness restrictions. Choose $\alpha$ as a positive constant sufficiently small; and $\beta = \beta_n \asymp n^{-1/4}$ with a sufficiently small constant factor. 
		
		Using Le Cam's inequality, the consideration
		\begin{align*}
			& \sup_{f_Z\in {\cal F},  h\in {\cal H}} \mathbb{E}_{f_Z,h} \Big| \hat{H}_n - \int_{1/4}^{3/4} h(x) dx\Big|^2 \\
			& \, \geq \, \frac12 \, \iint \mathbb{E} \Big| \hat{H}_n({\bf u},{\bf z}) - \int_{1/4}^{3/4} h_{\hat{\theta}}(x) dx\Big|^2 \eta_{\hat{\theta}}^{(n)}({\bf u},{\bf z})\, d{\bf u}\, d{\bf z} \\ & \quad  + \,  \frac12 \, \iint \Big| \hat{H}_n({\bf u},{\bf z}) - \int_{1/4}^{3/4} h_{0}(x) dx\Big|^2 \eta_{0}^{(n)}({\bf u},{\bf z})\, d{\bf u}\, d{\bf z} \\
			& \, \geq \, \frac1{64}\beta^2\big(F(1+\alpha)+F(1-\alpha)-2\big)^2 \cdot \iint \min\big\{\mathbb{E} \eta_{\hat{\theta}}^{(n)}({\bf u},{\bf z}) , \eta_{0}^{(n)}({\bf u},{\bf z})\big\} \, d{\bf u}\, d{\bf z} \\
			& \, \geq \, \frac1{64}\beta^2\big(F(1+\alpha)+F(1-\alpha)-2\big)^2 \cdot \Big\{1 - \Big[1 - \Big(1 - \frac12\chi^2\big(\mathbb{E} \eta_{\hat{\theta}}^{(n)}, \eta_{0}^{(n)}\big)\Big)^2\Big]^{1/2}\Big\}\,,
		\end{align*}
		concludes the proof of the theorem. 
	\end{proof}

	\section{Proof of Lemma \ref{L:u0bound}, and of statement of Remark \ref{rem:highmom}}\label{sec:technicaldetails}
	
	Recall that for $J \in {\cal J}_K$ we take $\widehat{j}$ as the smallest $j\in J$ for which $\|z-Z_j\| \geq \|z-Z_k\|$ for all $k\in J$, and denote by $\mathfrak{A}$ the $\sigma$ - field generated by $(\widehat{j}, Z_{\widehat{j}})$. 
	Further we set $\widehat{J} := J\backslash \{\widehat{j}\}$ and in the following denote
	$$ u_0 \, = \,{\bf e}_0^\top {\cal M}_J(z)^{-1} {\bf e}_0\, $$
	We need to show that for $\eta\geq 1$ and $K \geq 1 + (\lfloor 2\eta D\rfloor+1)K^*$ we have that 
	\begin{equation}\label{eq:bounduobound}
		\mathbb{E}\,[u_0^\eta \mid \mathfrak{A}] \, \leq \, C_3\,. 
	\end{equation}
	Note that  $u_0$ is the first component of the column vector ${\bf u}$ which satisfies ${\cal M}_J(z) {\bf u} \, = \, {\bf e}_0$. Multiplying this system of linear equations by ${\bf u}^\top$ from the left side we arrive at
	$$ u_0  \, = \, \sum_{j\in J} \Big(\sum_{\kappa \in {\cal K}} u_\kappa \, \xi_{\kappa}(z,Z_j)\Big)^2\,. $$
	As we are seeking for a positive upper bound on $u_0$ we may assume that $u_0>0$. Dividing by $u_0^2$ yields that
	\begin{align} \nonumber u_0 & \, = \, 1 \big/ \Big\{\sum_{j\in J} \Big(\sum_{\kappa \in {\cal K}} (u_\kappa/u_0) \, \xi_{\kappa}(z,Z_j)\Big)^2\Big\} \\ \nonumber
		& \, \leq \,  1 \big/ \Big\{\inf_{P \in {\cal P}_1} \sum_{j\in J} P^2(z-Z_j)\Big\} \\ \label{eq:u01}
		& \, \leq \, 1 \big/ \Big\{\inf_{P \in {\cal P}_1} \sum_{j\in \widehat{J}} P^2(z-Z_j)\Big\}\,,
	\end{align}
	where ${\cal P}_1$ denotes the set of all $d$-variate polynomials with the degree $\leq L$ which take on the value $1$ at $0$, and we used the notation $\widehat{J} := J\backslash \{\widehat{j}\}$ introduced above.

	%\begin{equation} \label{eq:ff1} \mathbb{E}\big({\bf e}_0^\top {\cal M}_J(z)^{-1} {\bf e}_0 \mid \mathfrak{A}'\big) \, = \, \mathbb{E}\big({\bf e}_0^\top {\cal M}_J(z)^{-1} {\bf e}_0 \mid \mathfrak{A}\big)\,, \end{equation}
	%almost surely.

	%Let $\widehat{J} := J\backslash \{\widehat{j}\}$ and
	%where  For further consideration the following result is required.
	
	\begin{lemma} \label{L:conddist}
		Given $\mathfrak{A}$, the random variables $Z_j$, $j\in \widehat{J}$, are conditionally independent and each $Z_j$ has the conditional Lebesgue density
		$$ f_{Z|\mathfrak{A}}(u) \, = \, f_Z(u) \cdot {\bf 1}_{[0,\|z-Z_{\widehat{j}}\|)}(\|z-u\|) \, / \, \int f_Z(u') \cdot {\bf 1}_{[0,\|z-Z_{\widehat{j}}\|)}(\|z-u'\|)\, \dd u'\,, \qquad u \in \mathbb{R}^d\,. $$
	\end{lemma}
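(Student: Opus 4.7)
The plan is to derive the conditional density directly from the joint density of $(Z_j)_{j \in J}$ under independence and then identify the factorized form. Since $J \subset \{1,\ldots,n\}$ is \emph{fixed} (not the data-dependent $J(z)$), the random variables $(Z_j)_{j\in J}$ are i.i.d.\ with Lebesgue density $f_Z$, and the event $\{\widehat j = j_0\}$ is determined by the configuration alone: it is (up to tie-breaking) the event that $\|z - Z_{j_0}\|$ is the maximum over $\{Z_j : j \in J\}$.

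Next, I would compute, for a fixed $j_0 \in J$ and measurable sets $A \subseteq (\mathbb{R}^d)^{\widehat J}$, $B_0 \subseteq \mathbb{R}^d$,
$$\Pr\bigl((Z_j)_{j\in\widehat J} \in A,\; Z_{\widehat j} \in B_0,\; \widehat j = j_0\bigr) = \int_{B_0} f_Z(u_0) \int_A \prod_{j \in \widehat J} f_Z(z_j)\,\mathbf{1}\bigl(\|z - z_j\| < \|z - u_0\|\bigr) \prod_{j \in \widehat J} \dd z_j \, \dd u_0.$$
Here the strict inequality replaces the formal tie-breaking rule (which distinguishes according to index order on the diagonal $\{\|z - z_j\| = \|z - u_0\|\}$), since $f_Z$ is a Lebesgue density and so the set of configurations with equal distances has measure zero. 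In particular this expression gives the joint (sub)density of $(\widehat j, Z_{\widehat j}, (Z_j)_{j \in \widehat J})$ on $\{j_0\} \times \mathbb{R}^d \times (\mathbb{R}^d)^{\widehat J}$.

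Finally, this integrand factorizes over $j \in \widehat J$; dividing by the marginal sub-density of $(\widehat j, Z_{\widehat j})$ — obtained by integrating out $(z_j)_{j \in \widehat J}$, which yields the normalizing constant $\bigl(\int f_Z(u')\,\mathbf{1}_{[0,\|z-u_0\|)}(\|z-u'\|)\,\dd u'\bigr)^{|\widehat J|}$ times $f_Z(u_0)$ — the conditional density of $(Z_j)_{j \in \widehat J}$ given $\mathfrak{A}$ is exactly the product of $|\widehat J|$ copies of the density stated in the lemma, establishing both conditional independence and the claimed form. The only subtlety is the book-keeping of the tie-breaking rule across the partition $\{\widehat j = j_0\}_{j_0 \in J}$, but since ties occur with probability zero this is a formality rather than a real obstacle.
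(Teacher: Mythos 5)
Your proposal is correct and follows essentially the same argument as the paper: both decompose along the partition $\{\widehat j = j_0\}_{j_0 \in J}$, exploit i.i.d.\ sampling to write the joint (sub)density of $(\widehat j, Z_{\widehat j}, (Z_j)_{j\in\widehat J})$ as $f_Z(u_0)\prod_{j\in\widehat J}f_Z(z_j)\mathbf{1}(\|z-z_j\|<\|z-u_0\|)$, and then divide by the marginal of $(\widehat j, Z_{\widehat j})$ to obtain the product-form conditional density. The paper phrases the identification via probe/test functions $\Phi,\Omega$ rather than directly via disintegration of densities as you do, but this is merely a stylistic difference and the computation is the same; your handling of the measure-zero tie set is also the right justification.
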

	\begin{proof}[Proof of Lemma \ref{L:conddist}] Let $\Phi$ and $\Omega$ denote two probe functions and consider that
		\begin{align*}
			& \mathbb{E} \big[ \Phi\big(Z_j \, , \, j\in \widehat{J}\big) \cdot \Omega\big(\widehat{j},Z_{\widehat{j}}\big)\big] \, = \, \sum_{j'\in J} \mathbb{E} \, {\bf 1}\{\widehat{j}=j'\} \cdot \Phi\big(Z_k , k\in J\backslash \{j'\}\big) \cdot \Omega(j' , Z_{j'}) \\
			&  =  \sum_{j'\in J}  \int  \Omega(j',u) f_Z(u)  \idotsint \Phi\big(z_1,\ldots,z_{K-1}\big) \\ & \hspace{2cm} \prod_{k=1}^{K-1} f_Z(z_k) {\bf 1}_{[0,\|u-z\|)}(\|z_k-z\|)  dz_1\cdots dz_{K-1} du.
		\end{align*}
		Now, putting $\Phi\equiv 1$ and changing $\Omega$ to $\Omega \cdot \tilde{\Phi}$, we deduce that
		\begin{align*}
			& \mathbb{E} \big[ \tilde{\Phi}\big(\widehat{j},Z_{\widehat{j}}\big)\cdot \Omega\big(\widehat{j},Z_{\widehat{j}}\big)\big]   =  \sum_{j'\in J}   \int  \Omega(j',u) \tilde{\Phi}\big(j',u\big) f_Z(u)  \\
			& \hspace{5cm}\Big(\int f_Z(\zeta) {\bf 1}_{[0,\|u-z\|)}(\|\zeta-z\|) d\zeta\Big)^{K-1}\, du,
		\end{align*}
		so that
		\begin{align*}
			\mathbb{E} \big\{ \Phi\big(Z_j \, , \, j\in \widehat{J}\big) & \mid  \mathfrak{A}\big\} \, = \,  \idotsint \Phi\big(z_1,\ldots,z_{K-1}\big) \\ & \cdot \prod_{k=1}^{K-1} \frac{f_Z(z_k) {\bf 1}_{[0,\|z-Z_{\widehat{j}}\|)}(\|z_k-z\|) }{ \int f_Z(\zeta) {\bf 1}_{[0,\|z-Z_{\widehat{j}}\|)}(\|\zeta-z\|) d\zeta}  dz_1\cdots dz_{K-1},
		\end{align*}
		holds almost surely, from what follows the claim of the lemma. 
	\end{proof}

	Note that $\mathbb{P}[Z_{\widehat{j}}=z]=0$. As an immediate consequence of Lemma \ref{L:conddist}, the random variables $V_j \, := \, (z-Z_j) / \|z - Z_{\widehat{j}}\|$, $j\in \widehat{J}$, are conditionally i.i.d. and have the conditional Lebesgue density
	\begin{equation} \label{eq:fv} f_{V|\mathfrak{A}}(v) \, = \, {\bf 1}_{B_d(0,1)}(v) \cdot f_Z\big(z-v\|z-Z_{\widehat{j}}\|\big) \, \big/ \, \int_{B_d(0,1)} f_Z\big(z-v'\|z-Z_{\widehat{j}}\|\big)\, dv'\,, \qquad v\in \mathbb{R}^d\,, \end{equation}
	given $\mathfrak{A}$. Furthermore, continuing in \eqref{eq:u01} we have that
	\begin{equation} \label{eq:u02} \inf_{P \in {\cal P}_1} \sum_{j\in \widehat{J}} P^2(z-Z_j) \, = \, \inf_{P \in {\cal P}_1} \sum_{j\in \widehat{J}} P^2(V_j)\,, \end{equation}
	since $P \in {\cal P}_1$ implies that $x \mapsto P(x/\|z-Z_{\widehat{j}}\|) \in {\cal P}_1$ and vice versa. Now fix any $J^* \subseteq \widehat{J}$ with the cardinality $K^*=\# {\cal K} < K$. The joint conditional density $f^*_{V|\mathfrak{A}}$ of $(V_j , j\in J^*)$ given $\mathfrak{A}$ equals
	$$ f^*_{V|\mathfrak{A}}(v) \, = \, \prod_{j\in J^*} f_{V|\mathfrak{A}}(v_j) \, , \qquad v = (v_j , j \in J^*) \in (\mathbb{R}^d)^{K^*}\,. $$
	Preparatory to further consideration we are interested in the conditional distribution of $\|V-w\|$ given $\mathfrak{A}$ and $W:=(V-w)/\|V-w\|$ when $V$ has the conditional density $f^*_{V|\mathfrak{A}}$ given $\mathfrak{A}$; and $w=(w_j , j\in J^*) \in (\mathbb{R}^d)^{K^*}$ is deterministic. 
	
	\begin{lemma} \label{L:univar}
		(a)\; If $V$ has the conditional density $f^*_{V|\mathfrak{A}}$ given $\mathfrak{A}$, then the conditional Lebesgue density of $\|V-w\|$ given $\mathfrak{A}$ and $W$ equals
		$$ f^*_{\|V-w\||\mathfrak{A},W}(r) \, = \, {\bf 1}_{(0,\infty)}(r)\cdot f^*_{V|\mathfrak{A}}(w+rW) \, r^{dK^*-1} \, \big/ \, \int_0^\infty  f^*_{V|\mathfrak{A}}(w+sW) \, s^{dK^*-1} \, ds\, , \  r \in \mathbb{R}\,. $$
		\noindent (b)\; Writing
		\begin{align*}
			\beta_1 & \, := \, \min\{1/2,\rho'/(2\rho'')\}\,, \\
			\beta_2 & \, := \, \sqrt{2K^* + 2\beta_1^2}\,, \\
			\beta_3 & \, := \,  \beta_2^{dK^*-1} \cdot \overline{\rho}^{K^*} \cdot  d \cdot K^* \big/  \big\{ \rho^{K^*} \cdot\beta_1^{dK^*}\big\}\,,
		\end{align*}
		we assume that $\|w\|\leq \beta_1$. Then, the support of $f^*_{\|V-w\|\, | \mathfrak{A},W}$ is included in the interval $[0,\beta_2]$ and  $f^*_{\|V-w\| \,|\mathfrak{A},W}$ is bounded from above by $\beta_3$.
	\end{lemma}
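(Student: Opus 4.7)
The proof splits naturally. For part (a), I would apply the polar-coordinate change of variables in $\mathbb{R}^{dK^*}$: the map $(r,\omega)\mapsto w+r\omega$ from $(0,\infty)\times \mathbb{S}^{dK^*-1}$ onto $\mathbb{R}^{dK^*}\setminus\{w\}$ has Jacobian $r^{dK^*-1}$, so under the law $f^*_{V|\mathfrak{A}}$ the pair $(R,W)=(\|V-w\|,(V-w)/\|V-w\|)$ has joint density $f^*_{V|\mathfrak{A}}(w+r\omega)\,r^{dK^*-1}$ with respect to the product of Lebesgue measure on $(0,\infty)$ and the surface measure on the unit sphere. Conditioning on $W$, that is, dividing by the marginal density of $W$, yields the displayed formula.

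For the support claim in part (b), I would use that by (\ref{eq:fv}) each $V_j$ is supported in $B_d(0,1)$, so $\|V\|^2 = \sum_j \|V_j\|^2 \leq K^*$, and the elementary inequality $(a+b)^2\leq 2a^2+2b^2$ combined with $\|w\|\leq \beta_1$ yields $\|V-w\|^2 \leq 2K^*+2\beta_1^2 = \beta_2^2$.

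For the upper bound $\beta_3$, the strategy is to lower-bound the denominator integral by restricting to a small $s$-window and to upper-bound the numerator trivially on $[0,\beta_2]$. Writing $I := \int_{B_d(0,1)} f_Z(z-v'\|z-Z_{\widehat{j}}\|)\,dv'$ for the normalising constant appearing in (\ref{eq:fv}), Assumption \ref{ass:deigndens} yields the two-sided pointwise bound $\rho/I \leq f_{V|\mathfrak{A}}(v)\leq \overline{\rho}/I$ whenever $z-v\|z-Z_{\widehat{j}}\|$ lies in $S$. The key observation is that for every $s\in[0,\beta_1]$ and every coordinate $j$, the triangle inequality gives $\|w_j+sW_j\|\leq \beta_1+s\leq 2\beta_1\leq 1$ (using $\|W_j\|\leq \|W\|=1$ and $\|w_j\|\leq \|w\|\leq \beta_1$), while the choice $\beta_1\leq \rho'/(2\rho'')$ together with $\|z-Z_{\widehat{j}}\|\leq \rho''$ places $z-(w_j+sW_j)\|z-Z_{\widehat{j}}\|$ in $B_d(z,\rho')\subseteq S$ by Assumption \ref{ass:boundeffect}. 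Hence $f^*_{V|\mathfrak{A}}(w+sW)\geq (\rho/I)^{K^*}$ throughout $[0,\beta_1]$, so the denominator is at least $(\rho/I)^{K^*}\,\beta_1^{dK^*}/(dK^*)$, while the numerator is at most $(\overline{\rho}/I)^{K^*}\,\beta_2^{dK^*-1}$ on its support. The unknown normalisation $I$ cancels in the ratio and produces exactly $\beta_3$. The only real obstacle is bookkeeping: one must verify that the window $[0,\beta_1]$ simultaneously keeps every coordinate $w_j+sW_j$ inside $B_d(0,1)$ and pulls back into $B_d(z,\rho')$ uniformly in $Z_{\widehat{j}}$ and $W$, and this is precisely what the two-part definition $\beta_1=\min\{1/2,\rho'/(2\rho'')\}$ is engineered to guarantee.
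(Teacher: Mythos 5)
Your proof is correct and follows essentially the same route as the paper. For part (a) you obtain the conditional density directly via the polar-coordinate change of variables around $w$, whereas the paper arrives at the same formula through a probe-function/disintegration computation with an auxiliary uniform random variable on a ball of radius $R$ (letting $R\to\infty$); for part (b) your support bound and density bound coincide step for step with the paper's, including the two-part role of $\beta_1$ and the restriction of the denominator integral to $[0,\beta_1]$ so that every coordinate $z-(w_j+sW_j)\|z-Z_{\widehat{j}}\|$ lands in $S$, with the normalising constant of $f_{V|\mathfrak{A}}$ cancelling in the ratio.
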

	
	\begin{proof}[Proof of Lemma \ref{L:univar}] (a)\; For two probe functions $\Phi$ and $\Omega$, we consider that
		\begin{align} \nonumber
			\mathbb{E}&\big[\Phi(\|V-w\|) \Omega(W)  \mid \mathfrak{A}\big] \, = \, \int \Phi(\|v-w\|) \Omega\big((v-w)/\|v-w\|\big) \, f^*_{V|\mathfrak{A}}(v) \, dv \\ \nonumber
			& \, = \, \mathbb{E} \big[ \Phi(\|U-w\|) \Omega(\tilde{U}) \, f^*_{V|\mathfrak{A}}(U)\big] \cdot \pi^{dK^*/2} R^{dK^*} / \Gamma(dK^*/2+1) \\  \label{eq:Pr.01.01.0}
			& \, = \, \mathbb{E} \Big[ \Omega(\tilde{U}) \, \int_0^R \Phi(r) f^*_{V|\mathfrak{A}}(w + r\tilde{U}) r^{dK^*-1} dr\Big]  \cdot d \cdot K^* \cdot \pi^{dK^*/2} / \Gamma(dK^*/2+1)\,,
		\end{align}
		where $\tilde{U} := (U-w)/\|U-w\|$; and $R>0$ is sufficiently large such that the $dK^*$-dimensional ball around $w$ with the radius $R$ contains the closure of $B_d(0,1)^{(K^*)}$ -- and, hence, the support of $f^*_{V|\mathfrak{A}}$ --  as a subset; and the random vector $U$ is uniformly distributed on this ball. Therein we have used that $\|U-w\|$ and $\tilde{U}$ are independent. Now evaluating (\ref{eq:Pr.01.01.0}) for $\Phi \equiv 1$ and $\Omega$ being replaced by $\Omega\cdot\tilde{\Phi}$, we deduce that
		$$ \tilde{\Phi}(\tilde{U}) \, = \, \frac{\int_0^R \Phi(r) f^*_{V|\mathfrak{A}}(w + r\tilde{U}) r^{dK^*-1} dr }{ \int_0^R f^*_{V|\mathfrak{A}}(w + s\tilde{U}) s^{dK^*-1} ds}\,, \qquad \mbox{a.s.}\,, $$
		so that 
		$$ \mathbb{E}\big(\Phi(\|V-w\|) \mid W,\mathfrak{A}\big) \, = \, \tilde{\Phi}(W) \, = \, \frac{\int_0^R \Phi(r) f^*_{V|\mathfrak{A}}(w + rW) r^{dK^*-1} dr }{ \int_0^R f^*_{V|\mathfrak{A}}(w + sW) s^{dK^*-1} ds}, $$
		holds true almost surely. As the left side does not depend on $R$ the equality remains valid for $R\to\infty$. That completes the proof of part (a). \\
		
		\noindent (b)\; Part (a) yields that $\|w_j + rW_j\|\leq 1$ for all $j\in J^*$ with $W = (W_j , j\in J^*)$ whenever $f^*_{\|V-w\| \, |\mathfrak{A},W}(r) > 0$ since the support of $f^*_{V|\mathfrak{A}}$ is included in $B_d(0,1)^{(K^*)}$. Thus, $r\|W_j\|\leq 1+\|w_j\|$ holds for all $j\in J^*$ so that
		\begin{align*}
			r^2 & \, = \, \sum_{j\in J^*} r^2 \|W_j\|^2 \, \leq \, \sum_{j\in J^*} (2 + 2\|w_j\|^2) \, = \, 2K^* + 2\beta_1^2\,.
		\end{align*} 
		Therefore the support of $f^*_{\|V-w\| \,|\mathfrak{A},W}$ is included in the interval $[0,\beta_2]$. Moreover, we have
		\begin{align*} & f^*_{\|V-w\| ,|\mathfrak{A},W}(r) \\
			& \, = \,  {\bf 1}_{[0,\beta_2]}(r)\,\Big\{\prod_{j\in J^*} {\bf 1}_{B_d(0,1)}(w_j+rW_j) f_Z\big(z-(w_j+rW_j)\|z-Z_{\widehat{j}}\|\big)\Big\} \, r^{dK^*-1} \\ & \, \big/ \, \int_0^\infty \Big\{\prod_{j\in J^*} {\bf 1}_{B_d(0,1)}(w_j+sW_j) f_Z\big(z-(w_j+sW_j)\|z-Z_{\widehat{j}}\|\big)\Big\}  \, s^{dK^*-1} \, ds \\
			& \, \leq \,  \beta_2^{dK^*-1} \cdot \overline{\rho}^{K^*}  \, \big/ \, \Big( \rho^{K^*}\, \int_0^{\beta_1} \, s^{dK^*-1} \, ds\Big) \\
			& \, = \, \beta_3\,,
		\end{align*}
		where we have used that, for $s\in [0,\beta_1)$, it holds that $\|w_j+sW_j\|<1$; that 
		$\|w_j+sW_j\|\cdot \|z-Z_{\widehat{j}}\| < \rho'$; and, hence, $z-(w_j+sW_j) \|z-Z_{\widehat{j}}\| \in S$ as $Z_{\widehat{j}} \in S$, for all $j\in J^*$ and $z\in S^*$.
	\end{proof}

	These properties of $f^*_{\|V-w\|  \,|\mathfrak{A},W}$ are essential to establish the following result.
	\begin{lemma} \label{L:01}
		As above consider $J^* \subseteq \hat J$. 
		%and $K^*$ from Lemma \ref{L:univar}. 
		Assume that $d\geq 2$; that $L\geq 1$; and that $K^* = \# {\cal K}(d,L) < K$. Define $D$ as in \eqref{eq:constants}, that is
		$$ D \, := \, \sum_{\ell=1}^L \ell \cdot \#\big\{\kappa\in \mathbb{N}_0^d \mid \kappa_1 + \cdots + \kappa_d = \ell\big\}\,. $$ 
		By $\Xi_{d,L}({\bf x})$ we denote the square matrix which consists of the row vectors $\big(\xi_\kappa(0,{\bf x}_j)\big)_{\kappa \in {\cal K}(d,L)}$ for $j=1,\ldots,K^*$ where ${\bf x} := \big({\bf x}_1,\ldots,{\bf x}_{K^*}\big) \in (\mathbb{R}^d)^{K^*}$. Let us write $\vartheta_{d,L}({\bf x})$ for the determinant of $\Xi_{d,L}({\bf x})$. Let $V_j$, $j\in J^*$ be a conditionally i.i.d. random sample, drawn from $f^*_{V|\mathfrak{A}}$, given $\mathfrak{A}$. Write $V$ for the $dK^*$-dimensional row vector $V := (V_j \, , \, j\in J^*)$. Then, the random matrix $\Xi_{d,L}(V)$ is invertible almost surely, and
		$$ \mathbb{P}\big[|\vartheta_{d,L}(V)| \leq \varepsilon\mid \mathfrak{A}\big] \, \leq \, C_1 \cdot \varepsilon^{1/D} \, , \qquad \forall \varepsilon \in (0,\varepsilon_1)\,, $$
		almost surely for deterministic positive constants $C_1$ and $\varepsilon_1$ which only depend on $d$, $L$, $\rho$, $\overline{\rho}$, $\rho'$, $\rho''$.
	\end{lemma}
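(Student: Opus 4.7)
\emph{Plan for the proof of Lemma~\ref{L:01}.} The idea is to reduce the multivariate anti-concentration of $\vartheta_{d,L}(V)$ to the one-dimensional case via Lemma~\ref{L:univar} and then invoke a classical Remez-type inequality. First I would verify that $\vartheta_{d,L}(v)$ is a homogeneous polynomial of total degree $D$ in the $dK^*$ coordinates $v=(v_1,\ldots,v_{K^*})$: cofactor expansion writes $\vartheta_{d,L}$ as a signed sum over bijections $\sigma:\{1,\ldots,K^*\}\to {\cal K}$ of products $\prod_{j}\xi_{\sigma(j)}(0,v_j)$, each of total degree $\sum_{\kappa\in {\cal K}}|\kappa|=D$. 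The polynomial is not identically zero, since multivariate polynomial interpolation of degree $\leq L$ in $d$ variables is unisolvent on generic $K^*$-point configurations. Consequently there exists a fixed reference point $w^*\in(\mathbb{R}^d)^{K^*}$ with $\|w^*\|\leq \beta_1$ and $c_0:=\vartheta_{d,L}(w^*)\neq 0$, obtained by rescaling any non-vanishing point into the ball of radius $\beta_1$; this $w^*$ is deterministic and depends only on $d$ and $L$.

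Next I would apply Lemma~\ref{L:univar} with this $w^*$. Writing $R:=\|V-w^*\|$ and $W:=(V-w^*)/R$, part (b) of that lemma gives that, conditional on $\mathfrak{A}$ and $W$, the random variable $R$ has a Lebesgue density bounded by $\beta_3$ and supported in $[0,\beta_2]$. The function $q(r):=\vartheta_{d,L}(w^*+rW)$ is then a polynomial in $r$ of degree $\leq D$, and $q(0)=\vartheta_{d,L}(w^*)=c_0$, hence $\max_{r\in[0,\beta_2]}|q(r)|\geq|c_0|$ uniformly in $W$.

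The final step is the classical one-dimensional polynomial anti-concentration inequality obtained from Remez: for any polynomial $q$ of degree $\leq D$ on an interval $I$ with $M=\|q\|_{L^\infty(I)}$, one has $|\{r\in I:|q(r)|\leq \varepsilon\}|\leq 4|I|(\varepsilon/M)^{1/D}$. Combining this with the density bound $\beta_3$ yields
$$
\mathbb{P}\big[|\vartheta_{d,L}(V)|\leq \varepsilon \mid W,\mathfrak{A}\big] \leq 4\beta_2\beta_3 (\varepsilon/|c_0|)^{1/D},
$$
almost surely. Taking conditional expectation over $W$ gives the lemma with $C_1=4\beta_2\beta_3 |c_0|^{-1/D}$, a constant depending only on $d$, $L$, $\rho$, $\overline{\rho}$, $\rho'$, $\rho''$. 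Sending $\varepsilon\to 0$ also yields $\mathbb{P}[\vartheta_{d,L}(V)=0\mid \mathfrak{A}]=0$, so $\Xi_{d,L}(V)$ is almost surely invertible. The main obstacle I anticipate is invoking Remez's inequality in the form that yields the sharp exponent $1/D$; the decisive trick is to pick $w^*$ strictly away from the origin so that $q(0)=\vartheta_{d,L}(w^*)\neq 0$ supplies a lower bound on $\|q\|_{L^\infty([0,\beta_2])}$ uniformly in the random direction $W$, thereby avoiding the need to integrate a negative power of $\vartheta_{d,L}$ over the directional component.
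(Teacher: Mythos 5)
Your proposal is correct and follows essentially the same strategy as the paper's proof: fix a reference point $w^*$ with $\|w^*\|\leq\beta_1$ where $\vartheta_{d,L}$ does not vanish, use Lemma~\ref{L:univar} to control the conditional law of $\|V-w^*\|$ given the direction $W$ and $\mathfrak{A}$, and reduce to an anti-concentration estimate for the univariate polynomial $r\mapsto\vartheta_{d,L}(w^*+rW)$ of degree $\leq D$. The only substantive difference is in that last step: the paper factors the polynomial over its complex roots and bounds the small-ball probability root by root (a Cartan-type argument), whereas you invoke the Remez/Brudnyi--Ganzburg sublevel-set inequality; both yield the same $\varepsilon^{1/D}$ rate and the a.s.\ invertibility claim.
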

	
	\begin{proof}[ Proof of Lemma \ref{L:01}]  Clearly the function $\vartheta_{d,L}$ forms a polynomial in $dK^*$ variables (namely the components of ${\bf x}$) with the degree of at most $D$. Note that $\vartheta_{d,L}$ is non-constant since it contains exactly $K^*! \geq 2$ distinct monomials where each of them has the coefficient of either $1$ or $-1$. Hence the zero set of $\vartheta_{d,L}$ has the $dK^*$-dimensional Lebesgue measure zero. It follows from there that $\vartheta_{d,L}(V)$ does not vanish and, thus, $\Xi_{d,L}(V)$ is invertible almost surely as the random vector $V$ has the $dK^*$-dimensional Lebesgue density $f^*_{V|\mathfrak{A}}$. 
		
		We quantify the small ball probabilities of $|\vartheta_{d,L}(V)|$ around zero in terms of upper bounds. By $M>0$ we denote the maximum of $|\vartheta_{d,L}|$ on the closure of the $dK^*$-dimensional ball with the center $0$ and the radius $\beta_1/2$ where $\beta_1$, $\beta_2$ and $\beta_3$ are as in Lemma \ref{L:univar}. We specify $w$ from Lemma \ref{L:univar} as some element of that closed ball in which this maximum is taken on. Thus, $\|w\|\leq \beta_1/2$ and $|\vartheta_{d,L}(w)|=M$. Note that any non-negative, non-constant and continuous function takes on at least one maximum on any compact domain. Consider that
		\begin{align} \label{eq:Pr.01.00}
			\mathbb{P}\big[|\vartheta_{d,L}(V)| \leq \varepsilon\mid \mathfrak{A}\big] & \, = \, \mathbb{E} \big\{ \mathbb{P}\big[|\vartheta_{d,L}(w + \|V-w\|W)| \leq \varepsilon \mid W,\mathfrak{A}\big] \mid \mathfrak{A}\big\}\,,
		\end{align}
		where $W = (V-w)/\|V-w\|$. We apply the fundamental theorem of algebra to the univariate polynomial $r \mapsto \vartheta_{d,L}(w + rW)$ with the degree $D^* \leq D$; and we exploit that the absolute value of this polynomial takes on the value $M$ at $0$ so that
		$$ |\vartheta_{d,L}(w + rW)| \, \geq \, M \cdot \prod_{j=1}^{D^*} \big| 1 - r/|\zeta_j|\big|\,, $$
		for all $r\geq 0$ where $\zeta_j$, $j=1,\ldots,D^*$, are the complex roots of the polynomial. Hence,
		\begin{align} \nonumber
			\mathbb{P}&\big[|\vartheta_{d,L}(w + \|V-w\|W)| \leq \varepsilon \mid W,\mathfrak{A}\big] \, \leq \, \mathbb{P}\Big[\prod_{j=1}^{D^*} \big| 1 - \|V-w\|/|\zeta_j|\big| \leq \varepsilon /M \mid W,\mathfrak{A}\Big] \\ \label{eq:Pr.01.01}
			& \, \leq \, \sum_{j=1}^{D^*}  \mathbb{P}\Big[|\zeta_j|\cdot\big(1 + (\varepsilon/M)^{1/D^*}\big) \geq \|V-w\| \geq |\zeta_j|\cdot\big(1 - (\varepsilon/M)^{1/D^*}\big)\mid W,\mathfrak{A}\Big]\,, 
		\end{align}
		for all $\varepsilon \in (0,M/2)$. If $|\zeta_j| > \beta_2 / \big(1-(1/2)^{1/D^*}\big)$ then 
		$$ \mathbb{P}\Big[|\zeta_j|\cdot\big(1 + (\varepsilon/M)^{1/D^*}\big) \geq \|V-w\| \geq |\zeta_j|\cdot\big(1 - (\varepsilon/M)^{1/D^*}\big)\mid W,\mathfrak{A}\Big] \, = \, 0\,. $$
		Otherwise, 
		\begin{align*}
			&  \mathbb{P}\Big[|\zeta_j|\cdot\big(1 + (\varepsilon/M)^{1/D^*}\big) \geq \|V-w\| \geq |\zeta_j|\cdot\big(1 - (\varepsilon/M)^{1/D^*}\big)\mid W,\mathfrak{A}\Big] \\
			\, \leq \, & 2  \beta_2 \, \beta_3 \, (\varepsilon/M)^{1/D}  / \big(1-(1/2)^{1/D}\big).     
		\end{align*}
		
		Therefore, the term (\ref{eq:Pr.01.01}) is bounded from above by
		$$  2 D \, \beta_2 \, \beta_3 \, (\varepsilon/M)^{1/D} \, / \, \big(1-(1/2)^{1/D}\big)\,, $$
		for all $\varepsilon \in (0,M/2)$. Inserting this upper bound into (\ref{eq:Pr.01.00}) finally completes the proof. 
	\end{proof}

	The inequality from Lemma \ref{L:01} can be extended to the term $u_0$.
	\begin{lemma} \label{L:02}
		Grant the assumptions and the notation from Lemma \ref{L:01}. Then,
		$$ \mathbb{P}\Big[\inf_{P\in {\cal P}_1} \sum_{j\in J^*} P^2(V_j) \, \leq \, \varepsilon \mid \mathfrak{A}\Big] \, \leq \, C_2 \cdot \varepsilon^{1/(2D)}\,, \qquad \forall \varepsilon \in (0,\varepsilon_2)\,, $$
		for deterministic positive constants $C_2$ and $\varepsilon_2$ which only depend on $d$, $L$, $\rho$, $\overline{\rho}$, $\rho'$, $\rho ''$.
	\end{lemma}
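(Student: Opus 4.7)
The plan is to identify the infimum as a minimum-norm problem with a linear constraint, express this minimum explicitly in terms of the inverse of the design Gram matrix, and then bound that quantity in terms of the determinant $\vartheta_{d,L}(V)$, so as to reduce the statement to an application of Lemma \ref{L:01}.

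Concretely, write $A = \Xi_{d,L}(V)$, a $K^*\times K^*$ matrix, which by Lemma \ref{L:01} is almost surely invertible. Any $P\in {\cal P}_1$ is determined by its coefficient vector $c = (c_\kappa)_{\kappa\in {\cal K}}\in \mathbb{R}^{K^*}$ with respect to the monomials $\xi_\kappa(0,\cdot)$, and the constraint $P(0)=1$ reads ${\bf e}_0^\top c = 1$. Since $P(V_j)$ is the $j$-th entry of $Ac$, we have $\sum_{j\in J^*}P^2(V_j) = \|Ac\|^2 = c^\top (A^\top A) c$. Standard constrained optimization (Lagrange multipliers, or the Cauchy--Schwarz identity for positive definite inner products) then yields
\[
\inf_{P\in {\cal P}_1} \sum_{j\in J^*}P^2(V_j) \, = \, \min_{{\bf e}_0^\top c = 1} c^\top (A^\top A) c \, = \, \frac{1}{{\bf e}_0^\top (A^\top A)^{-1} {\bf e}_0}\,.
\]

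Next I will bound ${\bf e}_0^\top (A^\top A)^{-1} {\bf e}_0$ from above by $|\vartheta_{d,L}(V)|^{-2}$ up to a deterministic constant. Since the support of $f^*_{V|\mathfrak{A}}$ is contained in $B_d(0,1)^{K^*}$, each $V_j$ satisfies $\|V_j\|\le 1$, hence every entry of $A$ is bounded by $1$ in absolute value. By Cramer's rule, each entry of $A^{-1}$ equals $\pm$ a $(K^*-1)$-minor of $A$ divided by $\det A = \vartheta_{d,L}(V)$; Hadamard's or Leibniz's bound therefore gives $|A^{-1}_{ij}| \le K^*!/|\vartheta_{d,L}(V)|$. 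Consequently, ${\bf e}_0^\top (A^\top A)^{-1} {\bf e}_0 \le \|A^{-1}\|_F^2 \le (K^*)^2 (K^*!)^2/ |\vartheta_{d,L}(V)|^2$, and we conclude
\[
\inf_{P\in {\cal P}_1} \sum_{j\in J^*}P^2(V_j) \; \geq \; \frac{|\vartheta_{d,L}(V)|^2}{(K^*\cdot K^*!)^2}\,.
\]

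It only remains to combine this deterministic inequality with the small-ball estimate from Lemma \ref{L:01}. Setting $C^* := K^*\cdot K^*!$ we obtain, for $\varepsilon>0$,
\[
\mathbb{P}\Big[\inf_{P\in {\cal P}_1}\sum_{j\in J^*} P^2(V_j)\le \varepsilon\,\Big|\,\mathfrak{A}\Big] \; \leq \; \mathbb{P}\big[|\vartheta_{d,L}(V)|\le C^*\sqrt{\varepsilon}\,\big|\,\mathfrak{A}\big] \; \leq \; C_1\, (C^*)^{1/D}\,\varepsilon^{1/(2D)}\,,
\]
which is valid as long as $C^*\sqrt{\varepsilon}<\varepsilon_1$, i.e., for all $\varepsilon$ smaller than some $\varepsilon_2 := (\varepsilon_1/C^*)^2$. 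Choosing $C_2 := C_1\,(C^*)^{1/D}$ gives the claim. The only non-routine step is the reduction from the polynomial optimum to the determinant: the Lagrange computation is standard, so the essential work is the uniform boundedness of $A$'s entries (guaranteed by $V_j\in B_d(0,1)$) that lets Cramer's rule convert a lower bound on $|\det A|$ into a lower bound on $\sigma_{\min}(A)$, and hence into the desired bound on the minimum-norm problem.
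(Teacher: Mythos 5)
Your proof is correct and follows essentially the same approach as the paper's: both lower-bound $\inf_{P\in{\cal P}_1}\sum_{j\in J^*}P^2(V_j)$ by a constant multiple of $\vartheta_{d,L}^2(V)$ and then invoke Lemma \ref{L:01}. The paper reaches that determinant bound by bounding the infimum below by $\lambda_{\min}(\Xi^\top\Xi)$ and using the product-of-eigenvalues inequality $\det(\Xi^\top\Xi)\le\lambda_{\max}^{K^*-1}\lambda_{\min}$ together with a Frobenius bound on $\Xi$, whereas you compute the infimum exactly as $1/({\bf e}_0^\top(\Xi^\top\Xi)^{-1}{\bf e}_0)$ and bound $\Xi^{-1}$ entry-wise via Cramer's rule; the linear-algebra details differ slightly but the key idea is identical.
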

	
	\begin{proof}[ Proof of Lemma \ref{L:02}] Consider that
		\begin{align*}
			\inf_{P\in {\cal P}_1} \sum_{j\in J^*} P^2(V_j) & \, = \, \inf_{{\bf \alpha} \in \mathbb{R}^{\cal K} , \alpha_0=1} \, \| \Xi_{d,L}(V) {\bf \alpha}\|^2  \, = \, \inf_{{\bf \alpha} \in \mathbb{R}^{\cal K} , \alpha_0=1} \, {\bf \alpha}^\top \Xi_{d,L}(V)^\top \Xi_{d,L}(V) {\bf \alpha}\,,
		\end{align*}
		so that the above term is larger or equal to the smallest eigenvalue of the symmetric and positive definite matrix $\Xi_{d,L}(V)^\top \Xi_{d,L}(V)$. On the other hand, the largest eigenvalue of $\Xi_{d,L}(V)^\top \Xi_{d,L}(V)$ is bounded from above by the product of the spectral norms of $\Xi_{d,L}(V)^\top$ and $\Xi_{d,L}(V)$ and, thus, by $\|\Xi_{d,L}(V)\|_F^2$ where $\|\cdot\|_F$ denotes the Frobenius norm. We have
		\begin{align*}
			\|\Xi_{d,L}(V)\|_F^2 & \, = \, \sum_{j\in J^*} \sum_{\kappa \in {\cal K}} \xi_\kappa^2(0,V_j)  \, \leq \, \sum_{j\in J^*} \sum_{\ell=0}^L \|V_j\|^{2\ell} \, \sum_{|\kappa|=\ell} {\ell \choose \kappa_1,\ldots,\kappa_d} \, \prod_{k=1}^d \big( V_{j,k}^2 / \|V_j\|^2\big)^{\kappa_k} \\
			& \, = \, \sum_{j\in J^*} \sum_{\ell=0}^L \|V_j\|^{2\ell} \, \leq \, (L+1) \cdot K^*\,,
		\end{align*}
		as $\|V_j\|\leq 1$ a.s., see (\ref{eq:fv}). As the determinant of a symmetric matrix equals the product of all of its eigenvalues it follows that
		\begin{align*}
			\vartheta_{d,L}^2(V) \, = \, \det\big(\Xi_{d,L}(V)^\top \Xi_{d,L}(V)\big) \, \leq \, \big((L+1) K^*\big)^{K^*-1} \cdot \inf_{P\in {\cal P}_1} \sum_{j\in J^*} P^2(V_j)\,. 
		\end{align*}
		Then, Lemma \ref{L:01} yields that
		\begin{align*}
			\mathbb{P}\Big[\inf_{P\in {\cal P}_1} \sum_{j\in J^*} P^2(V_j) \, \leq \, \varepsilon \mid \mathfrak{A}\Big] 
			& \, \leq \, \mathbb{P}\big[\big|\vartheta_{d,L}(V)\big| \leq \varepsilon^{1/2} \big((L+1)K^*\big)^{K^*/2-1/2} \mid \mathfrak{A}\big] \\
			& \, \leq \, C_1 \, \big((L+1)K^*\big)^{K^*/(2D)-1/(2D)} \cdot \varepsilon^{1/(2D)}\,, 
		\end{align*}
		for all $\varepsilon \in (0,\varepsilon_2)$ where the constant $\varepsilon_2>0$ only depends on $d$, $L$, $\rho$, $\overline{\rho}$, $\rho'$ and $\rho''$. 
	\end{proof}
	
	Now we ready to bound the conditional moments of $u_0$ as stated in Lemma \ref{L:u0bound}.%, from (\ref{eq:u01}).
	
	\begin{proof}[Proof of Lemma \ref{L:u0bound}] Combining (\ref{eq:u01}) and (\ref{eq:u02}), we deduce that
		\begin{align} \label{eq:Pruob1}
			\mathbb{E}&\big[\big({\bf e}_0^\top {\cal M}_J(z)^{-1} {\bf e}_0\big)^\eta \mid \mathfrak{A}\big] \, \leq \, \int_0^\infty \mathbb{P}\Big[\inf_{P\in {\cal P}_1} \sum_{j\in \hat{J}} P^2(V_j) < t^{-1/\eta} \mid \mathfrak{A}\Big] \, dt\,.
		\end{align} 
		There exist pairwise disjoint subsets $\hat{J}_k$, $k=1,\ldots,m$, of $\hat{J}$ with $\# \hat{J}_k = K^*$ for all $k=1,\ldots,m$, where $m > 2\eta D$. Clearly,
		$$ \inf_{P\in {\cal P}_1} \sum_{j\in \hat{J}} P^2(V_j) \, \geq \, \sum_{k=1}^m \inf_{P\in {\cal P}_1} \sum_{j\in \hat{J}_k} P^2(V_j) \geq \, \max_{k=1,\ldots,m} \inf_{P\in {\cal P}_1} \sum_{j\in \hat{J}_k} P^2(V_j)\,, $$
		where the random variables $\inf_{P\in {\cal P}_1} \sum_{j\in \hat{J}_k} P^2(V_j)$, $k=1,\ldots,m$, are i.i.d. (conditionally on $\mathfrak{A}$). By Lemma \ref{L:02}, the right side of (\ref{eq:Pruob1}) is smaller or equal to
		\begin{align*}
			\int_0^\infty \mathbb{P}&\Big[\max_{k=1,\ldots,m} \inf_{P\in {\cal P}_1} \sum_{j\in \hat{J}_k} P^2(V_j) < t^{-1/\eta} \mid \mathfrak{A}\Big] \, dt \\ & \, \leq \, \varepsilon_2^{-\eta} \, + \, \int_{t>\varepsilon_2^{-\eta}} \Big\{\mathbb{P}\Big[\inf_{P\in {\cal P}_1} \sum_{j\in \hat{J}_1} P^2(V_j) < t^{-1/\eta} \mid \mathfrak{A}\Big]\Big\}^m \, dt \\
			& \, \leq \, \varepsilon_2^{-\eta} \, + \,  C_2^m \, \int_{t>\varepsilon_2^{-\eta}} t^{-m/(2\eta D)} \, dt \\
			& \, \leq \, \varepsilon_2^{-\eta} \, + \,  C_2^m \, \frac{2\eta D}{m-2\eta D} \, \varepsilon_2^{-\eta + m/(2D)}\,,
		\end{align*}
		so that the lemma has been shown. 
	\end{proof}
	
	\begin{proof}[Proof of Remark \ref{rem:highmom}:] For any $J\in {\cal J}_K$ and integer $\ell \geq 1$, we deduce by Fubini's theorem that
		\begin{align*}
			& \mathbb{E} \lambda^\ell(C(J))\\ & \, = \, \mathbb{E} \Big(\int_{S^*} {\bf 1}_{C(J)}(x)\, dx\Big)^\ell \\
			& \, = \, \int_{S^*} \cdots \int_{S^*} \, \mathbb{P}\big[x_1,\ldots,x_\ell \in C(J)\big] \, dx_1 \cdots dx_\ell \\
			& \, = \, \int_{S^*} \cdots \int_{S^*} \, \mathbb{P}\big[\|x_m-Z_j\| < \|x_m-Z_k\| \,, \, \forall m=1,\ldots,\ell,\, j\in J, \, k\not\in J\big] \, dx_1 \cdots dx_\ell \\
			& \, = \, \int_{S^*} \cdots \int_{S^*} \, \mathbb{E} \, \mathbb{P}_Z^K\Big(\bigcap_{m=1}^\ell B_d\big(x_m,\min_{k\not\in J} \|x_m-Z_k\|\big)\Big) \, dx_1 \cdots dx_\ell\,,
		\end{align*}
		where $B(c,r)$ denotes the Euclidean ball around the center $c$ with the radius $r$; and $\mathbb{P}_Z$ stands for the image measure of $Z_1$. For some deterministic sequence $(\alpha_n)_n \downarrow 0$ we consider that
		\begin{align*}
			\mathbb{P}&\big[ \min_{m=1,\ldots,\ell} \, \min_{k\not\in J} \, \|x_m-Z_k\| \, > \, \alpha_n\big] \, = \, \mathbb{P} \big[\|x_m-Z_k\| > \alpha_n \, , \, \forall m=1,\ldots,\ell,\, k\not\in J\big] \\
			& \, = \, \mathbb{P}_Z^{n-K}\Big(\bigcap_{m=1}^\ell \mathbb{R}^d\backslash B(x_m,\alpha_n)\Big) \, = \, \Big\{1 - \mathbb{P}_Z\Big(\bigcup_{m=1}^\ell B(x_m,\alpha_n)\Big)\Big\}^{n-K} \\
			& \, \geq \, \Big\{1 - \sum_{m=1}^\ell \mathbb{P}_Z\big(B(x_m,\alpha_n)\big)\Big\}^{n-K} \, \geq \, \Big\{1 - \ell \cdot \overline{\rho} \cdot \frac{\pi^{d/2}}{\Gamma(d/2 +1)} \cdot \alpha_n^d\Big\}^{n-K}\,,
		\end{align*}
		for $n$ sufficiently large. We write ${\cal E}_n(x)$, $x=(x_1,\ldots,x_\ell)$, for the event that 
		$$\min_{m=1,\ldots,\ell} \, \min_{k\not\in J} \, \|x_m-Z_k\| \, > \, \alpha_n.$$ We fix that $\alpha_n \asymp n^{-1/d}$ so that 
		\begin{equation} \label{eq:Rev.1} \liminf_{n\to\infty} \, \inf_{x \in (S^*)^\ell} \, \mathbb{P}({\cal E}_n(x)) \, > \, 0\,. \end{equation}
		Furthermore we introduce the set 
		$$ {\cal S}_n \, := \, \big\{x=(x_1,\ldots,x_\ell) \in (S^*)^\ell \, : \, \|x_1 - x_m\| < \alpha_n/2 \, , \, \forall m=1,\ldots,\ell\big\}\,. $$
		On the event ${\cal E}_n(x)$ the ball $B(x_1,\alpha_n/2)$ is included in $\bigcap_{m=1}^{\ell} B_d\big(x_m,\min_{k\not\in J} \|x_m-Z_k\|\big)$ and in $S$ (for $n$ sufficiently large) as a subset for any $x\in {\cal S}_n$. Therefore,
		\begin{align*}
			\mathbb{E} \, \lambda^\ell(C(J)) & \, \geq \, \Big(\frac{\pi^{d/2}}{2^d \Gamma(d/2+1)}\Big)^K \cdot \rho^K \cdot \alpha_n^{Kd} \cdot \idotsint_{{\cal S}_n} \mathbb{P}\big({\cal E}_n(x)\big) \, dx_1 \cdots dx_\ell \\
			& \, \geq \, \Big(\frac{\pi^{d/2}}{2^d \Gamma(d/2+1)}\Big)^K\cdot \rho^K \cdot \alpha_n^{Kd} \cdot \inf_{x\in (S^*)^\ell} \mathbb{P}\big({\cal E}_n(x)\big) \cdot \idotsint_{{\cal S}_n} \, dx_1 \cdots dx_\ell\,.
		\end{align*}
		The $\ell d$-dimensional Lebesgue measure of ${\cal S}_n$ shall be bounded from below by $\mbox{const.}\cdot \alpha_n^{d(\ell-1)}$ as $n$ tends to $\infty$. Also note (\ref{eq:Rev.1}). Then, unfortunately, the $\ell$th moment of $\lambda(C(J))$ cannot attain the desired rate $n^{-\ell K}$ but only $n^{-\ell-K+1}$. 
	\end{proof}
	
	\section{Proofs for Section \ref{sec:applcompute}}\label{sec:proofapps}
	
	\begin{proof}[Proof of Theorem \ref{th:ratedeconv}]
		We show below that the estimators $\hat{\Psi}_{{\bf j}}$ of the Fourier coefficients $\langle h*\tilde{f}_\delta,\phi_{\bf j}\rangle$ resulting from \eqref{eq:estfinal} satisfy for $l=L$ that
		\begin{equation}\label{eq:boundFouriercoeff}
			\sup_{{\bf j}\in {\bf J}^{(n)}} \, \mathbb{E}\big[\big|\hat{\Psi}_{{\bf j}} - \Psi_{{\bf j}}\big|^2\big] \, \lesssim \, n^{-1} + (J_n \cdot n^{-1/d})^{2l}\,, 
		\end{equation} 
		since, under the condition $4(\alpha-1)(\alpha+\gamma) \geq d(2\alpha+2\gamma+d)$, for our choice of $J_n$ the first term dominates. By Parseval's identity, standard risk bounds for series estimators under the Sobolev condition \eqref{eq:sobolev} lead to 
		$$ \mathbb{E} \Big[\int_{[-\pi,\pi]^d} |\hat{h}(x) - h(x)|^2 \dd x\Big] \, \lesssim \, J_n^{-2\alpha} \, + \, J_n^d \cdot J_n^{2\gamma} \cdot n^{-1}\,, $$
		from which the rate \eqref{eq:boundberkson} follows upon inserting our choice of $J_n$. 
		
		\smallskip
		
		Concerning \eqref{eq:boundFouriercoeff}: Since $\|\phi_{{\bf j}}\|_\infty \leq 1$ the constant $C_V$ in Theorem \ref{th:theoremparrate} is set equal to the variance $\mbox{var}\, \epsilon$; whereas
		$$ \big\|\partial_{i_1}\cdots\partial_{i_l} G_{{\bf j}}\big\|_\infty \, \leq \, \sum_{M\subseteq\{1,\ldots,l\}} \big\|\partial_{i_M} \big(h*\tilde{f}_\delta\big)\big\|_\infty \cdot J_n^{l-\#M}\,, $$
		for all ${\bf j} \in {\bf J}^{(n)}:=\{-J_n,\ldots,J_n\}^d$ where $\partial_{i_1},\ldots,\partial_{i_l}$, $i_1,\ldots,i_l \in \{1,\ldots,d\}$, denote the partial differential operators with respect to the components $z_{i_1},\ldots,z_{i_l}$ where we allow for coincidence of the $i_1,\ldots,i_l$, and $\partial_{i_M}$ stands for the chaining of all $\partial_{i_k}$, $k\in M$. Therein note the symmetry of the partial derivatives. By the Cauchy-Schwarz inequality and the Fourier representation of the Sobolev norm, we consider that
		\begin{align*}
			\big\|\partial_{i_M} \big(h*\tilde{f}_\delta\big)\big\|_\infty & \, = \, \|(\partial_{i_M} h)*\tilde{f}_\delta\|_\infty \, \leq \, \Big(\int |\partial_{i_M} h|^2\Big)^{1/2}\cdot \Big(\int |f_\delta|^2\Big)^{1/2} \\
			& \, \leq \, (2\pi)^{-d/2} \cdot (C_\alpha^*)^{1/2}\cdot \Big(\int |f_\delta|^2\Big)^{1/2} \,,  
		\end{align*}
		when $l\leq \alpha$, so that 
		$$  \big\|\partial_{i_1}\cdots\partial_{i_l} G_{{\bf j}}\big\|_\infty \, \leq \, C_H \, = \, C_H({\bf j},l) \, := \, (2\pi)^{-d/2} \cdot (C_\alpha^*)^{1/2}\cdot \Big(\int |f_\delta|^2\Big)^{1/2} \cdot (1+J_n)^l\,, $$ 
		for ${\bf j}\in {\bf J}^{(n)}$ when $l \leq \lceil\alpha\rceil-1$. Then, $G_{{\bf j}} \in {\cal G}(l-1,1,C_H({\bf j},l))$ and Theorem \ref{th:theoremparrate} yields \eqref{eq:boundFouriercoeff}. 
		%
		%If we assume that $\alpha$ is 
		%
		%then under the 
	\end{proof}

			\section{Discussion of Asymptotic normality} \label{asno}
			
			In this section we discuss the asymptotic distribution of the estimator $\hat{\Psi}$. In the simple case when $\Psi$ is defined in (\ref{eq:simpleest}) we may consider the standardized sum
			$$ \hat{\Delta}_n \, := \, n^{1/2} \, \sum_{j=1}^n \big\{g(U_j,Z_j) - \mathbb{E}(g(U_j,Z_j)|\sigma_Z)\big\} \cdot \lambda(C_j)\,,$$
			and study this random variable by usual techniques from the proof of the Central Limit Theorem. The conditional characteristic function of $\hat{\Delta}_n$ given $\sigma_Z$ at some fixed $t\in \mathbb{R}$ may be decomposed as
			\begin{align*}
				\prod_{j=1}^n \Big\{1 - \frac{n}2 t^2 \cdot \mbox{var}\{g(U_j,Z_j)|Z_j\} \cdot \lambda^2(C_j) \pm {\cal O}\big(n^{3/2} \cdot \lambda^3(C_j)\big)\Big\}\,,      
			\end{align*}
			when the function $g$ is imposed to be bounded. Considering Remark \ref{rem:highmom} (in particular, $\mathbb{E} \lambda^3(C_j) \lesssim n^{-3}$) we deduce that, whenever $n \cdot \sum_{j=1}^n \mbox{var}\{g(U_j,Z_j)|Z_j\} \cdot \lambda^2(C_j)$ converges to some deterministic $\sigma^2>0$ in probability, then the random sequence $(\hat{\Delta}_n)_n$ converges weakly to a $N(0,\sigma^2)$-distributed random variable. 
			
			Of course, the question arises if a Gaussian limiting distribution can also be established for the advanced estimator $\hat{\Psi}$ from (\ref{eq:estfinal}). It is much harder to address since the summands in (\ref{eq:estfinal}) suffer from some specific weak dependence (only those summands are conditionally independent given $\sigma_Z$ for which the corresponding subsets $J$ are disjoint) and the higher order moments of $\lambda(C(J))$ show irregular behavior as explained in Remark \ref{rem:highmom}. Thus we have to leave the asymptotic distribution of the estimator (\ref{eq:estfinal}) open for future research.

			\begin{figure}   
				\centerline{
					\includegraphics[scale=0.4]{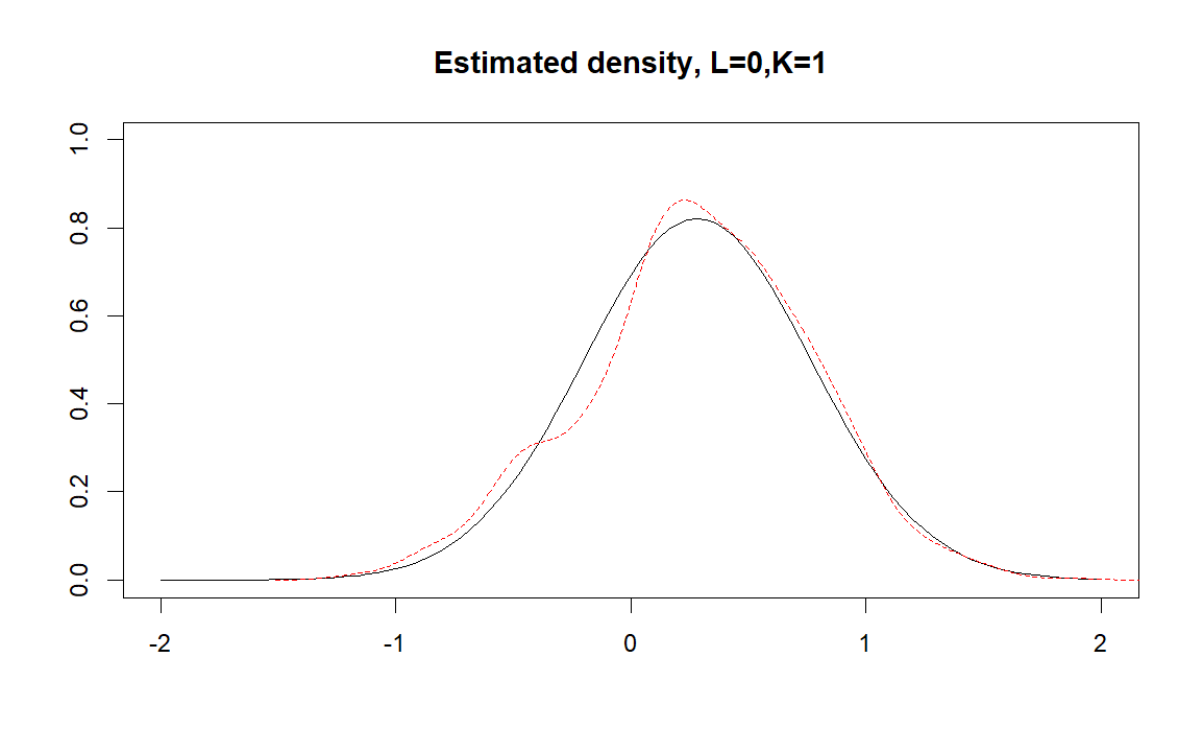}
					\includegraphics[scale=0.4]{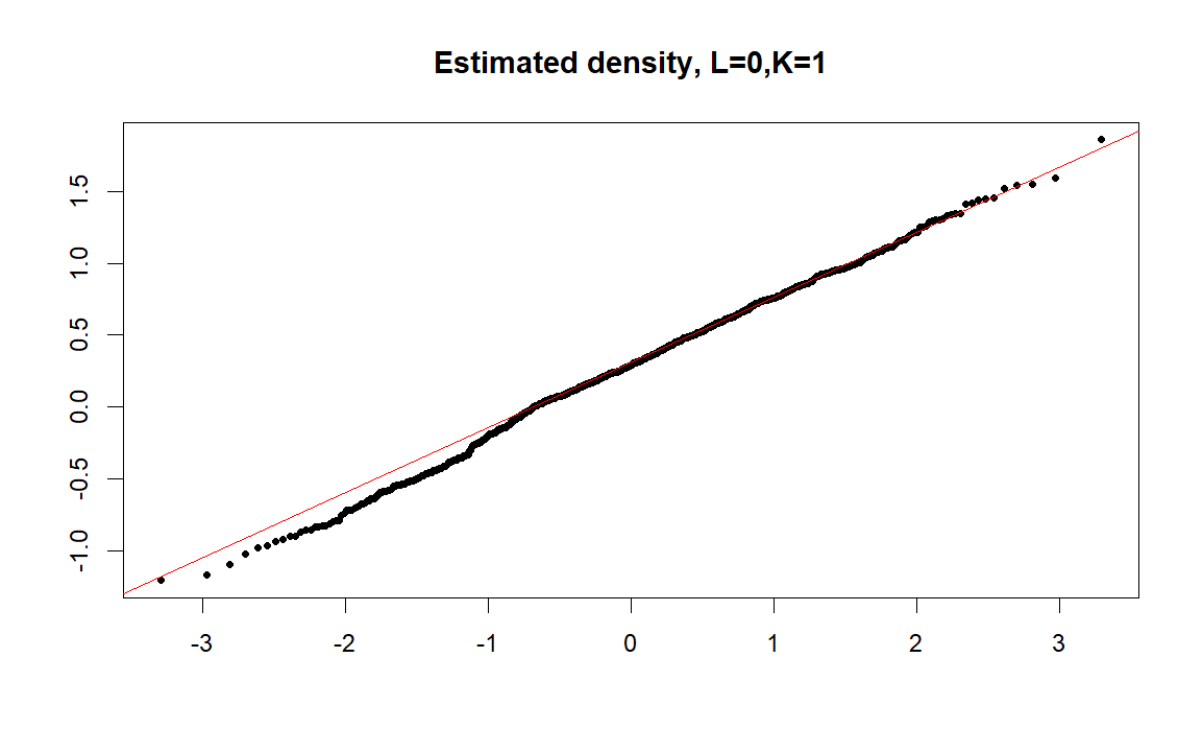}
				}
				\caption{Plots of $\sqrt{n}(\hat \Psi - \Psi)$ in simulation scenario $f_1$ with $L=0$, $K=1$, $n=1000$ over $N=1000$ repetitions. Left: Density plot (black curve) and normal density with estimated parameters (red curve). Right: QQ-Plot against standard normal, and qqline.  }  \label{fig:1}
			\end{figure}
			
			\begin{figure}   
				\centerline{
					\includegraphics[scale=0.4]{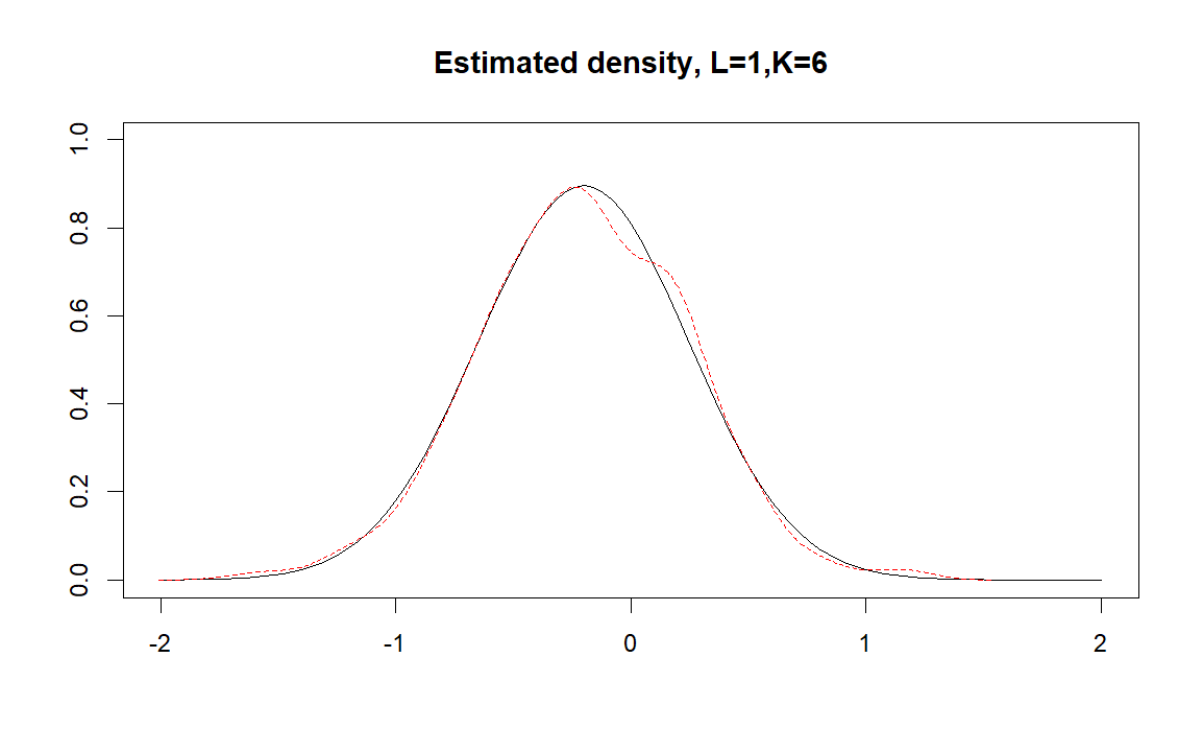}
					\includegraphics[scale=0.4]{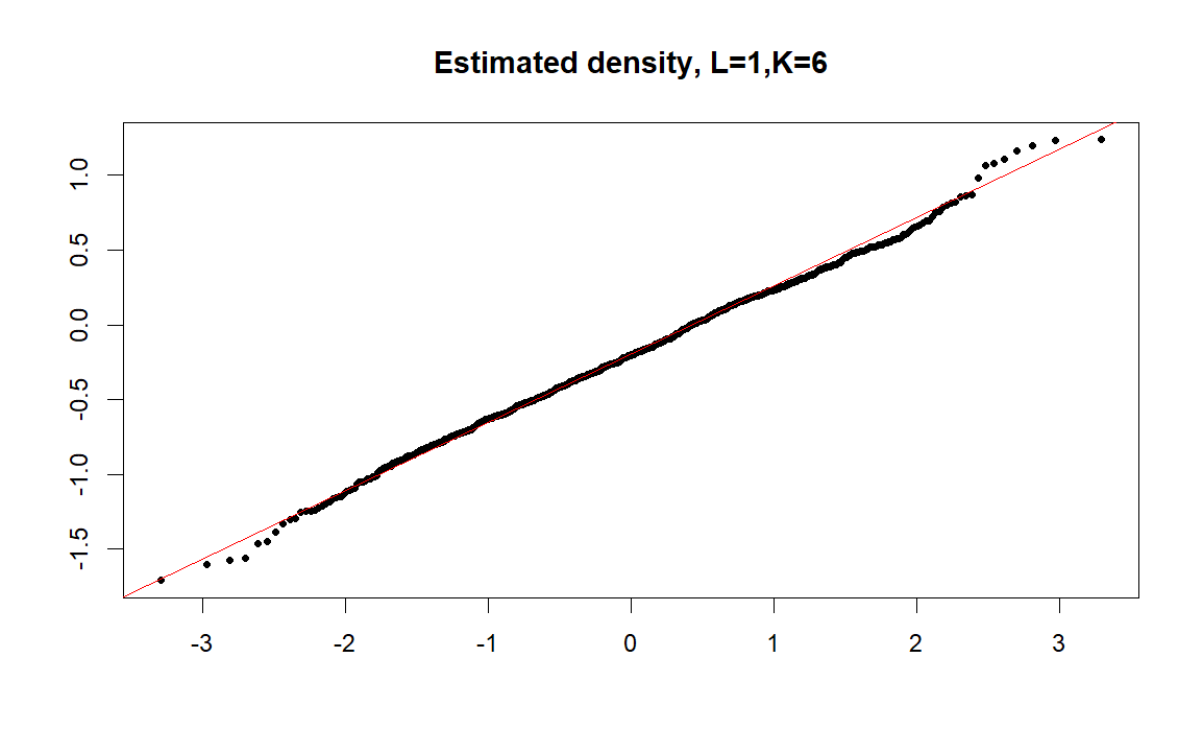}
				}
				\caption{Plots of $\sqrt{n}(\hat \Psi - \Psi)$ in simulation scenario $f_1$ with $L=1$, $K=6$, $n=1000$ over $N=1000$ repetitions. Left: Density plot (black curve) and normal density with estimated parameters (red curve). Right: QQ-Plot against standard normal, and qqline.}  \label{fig:2}
				
				%\caption{Plots of the optimal threshold $\delta^\ast$ as a function of $\pi$ for the different performance metrics used in Example \ref{ex:lda}}  \label{fig:lda1}
			\end{figure}

	\section{Additional simulations}\label{sec:addsim}
	
	 As a second scenario, in $d=3$ we now use the full support $[0,1]^3 =S^*=S$ to investigate potential boundary effects. We keep the function $f_1$, consider $n=1000$ (sample size), $m=10000$ (additional sample from uniform distribution on $S^*$), and $N=1000$ repetitions. The covariates are a sample from a mixture with weight $1/2$ of a uniform distribution on $[0,1]^3$, and a product of Beta-distributions with parameters $\alpha=\beta=3$ each.  The true parameter turns out to be $\Psi = 5.36$. 
		
		\setcounter{table}{2}
		
		\begin{table}[h]
			\begin{tabular}{lll|llll}
				$n=1000$ & $L=0$ & $K$ & 1 & 2 & 3 & 5  \\ \hline 
				& & $\sqrt n \cdot $ BIAS& 1.2945 & 1.5428 & 1.7177 & 1.8671  \\
				&& $\sqrt n \cdot $ STDV&  2.8676 & 3.0474 & 3.2148 & 3.4135  \\
				&& $\sqrt n \cdot $ RMSE&  {\bf 3.1463} & 3.4157 & 3.6449 & 3.8907 \\\hline
				& $L=1$ & $K$ & 5 & 6 & 7 & 8\\\hline
				&& $\sqrt n\cdot $ BIAS&  -0.4196 & -0.4585 & -0.5596 & -0.4818  \\
				&& $\sqrt n\cdot $ STDV&  3.4760 & 3.0252 & 3.0288 & 2.9639 \\
				&& $\sqrt n\cdot $ RMSE&  3.5012 & 3.0597 & 3.0801 & {\bf 3.0028} 
			\end{tabular}
			\caption{Simulation scenario with $f_1$ in $d=3$ and $[0,1]^3 =S=S^*$ (support of the covariates)}\label{tab:scen1}
		\end{table}
		
		We can still observe consistency and the overall reduction in bias from $L=0$ to  $L=1$ so that, at least in this scenario, the method seems to work for $S^*=S$.

	As a third scenario we consider a setting in $d=2$   with regression function 
	$$  f_2(z_1,z_2) =  z_1\, z_2^2-1+ \cos(z_1/z_2)$$
	and $Y_i = f_2(Z_{i,1},Z_{i,2}) + 0.2\, \varepsilon_i$, otherwise the parameters are the same as before. Results are displayed in Table \ref{tab:scen1}.  
	Higher values of $K$ perform better in this setting, but the method does not seem to depend sensitively on the choice of $K$ for $L=1$ as long as $K$ is not chosen too small. Again there is some reduction in bias from $L=0$ to $L=1$, in particular for $n=100$.

	\begin{table}[h]
		\begin{tabular}{lll|llllll}
			$n=100$ & $L=0$ & $K$ & 1 & 3 & 4 & 6&  8 & 10  \\ \hline
			& & $\sqrt n \cdot $ BIAS&  0.0156 & 0.0290 & 0.0351 & 0.0469 & 0.0574 & 0.0674  \\
			&& $\sqrt n \cdot $ STDV&  0.1000 & 0.0969 & 0.0990 & 0.1021 & 0.1067 & 0.1097  \\
			&& $\sqrt n \cdot $ RMSE&  0.1012 & {\bf 0.1011} & 0.1050 & 0.1124 & 0.1212 & 0.1287 \\\hline
			& $L=1$ & $K$ & 4 & 8 & 10 & 12 & 14 & 16\\\hline
			&& $\sqrt n\cdot $ BIAS&  -0.0002 & 0.0009  & 0.0003 & -0.0002 & -0.0032 & -0.0058\\
			&& $\sqrt n\cdot $ STDV&  0.1585 & 0.0972 & 0.0971 & 0.0981 & 0.0997 & 0.1002\\
			&& $\sqrt n\cdot $ RMSE&  0.1585 & 0.0972 & {\bf 0.0971} & 0.0981 & 0.0997 & 0.1004 \\\hline \hline
			$n=1000$ & $L=0$ & $K$ & 1 & 3 & 4 & 6&  8 & 10  \\ \hline 
			& & $\sqrt n \cdot $ BIAS& 0.0060 & 0.0163 & 0.0108 & 0.0216 & 0.0244 & 0.0285   \\
			&& $\sqrt n \cdot $ STDV&  0.1152 & 0.1064 & 0.1091 & 0.1030 & 0.1015 & 0.1042 \\
			&& $\sqrt n \cdot $ RMSE&  0.1153 & 0.1076 & 0.1096 & 0.1052 & {\bf 0.1044} & 0.1080  \\\hline
			& $L=1$ & $K$ & 4 & 8 & 10 & 12 & 14 & 16\\\hline
			&& $\sqrt n\cdot $ BIAS&  0.0026 & 0.0081 & 0.0115 & 0.0117 & 0.0134 & 0.0150 \\
			&& $\sqrt n\cdot $ STDV&  0.1664 & 0.1023 & 0.1030 & 0.1045 & 0.1005 & 0.1031\\
			&& $\sqrt n\cdot $ RMSE&  0.1664 & 0.1026 & 0.1036 & 0.1052 & {\bf 0.1014} & 0.1041  
		\end{tabular}
		\caption{Simulation scenario with $f_2$ in $d=2$}\label{tab:scen1}
	\end{table}

	%\begin{align*}	
	%	 \Psi \, &  := \, \int_{S^*} \int g(u,z) f_{U|Z}(u|z) du \, dz \,
	%	\end{align*} 

%$f_{U|Z}$ denotes the conditional Lebesgue density of $U_1$ given $Z_1$. The function $g$ is assumed to be known. 

%While the responses $U_j$ take their values in $\mathbb{R}$ the covariates $Z_j$ are $\mathbb{R}^d$-valued random variables with the Lebesgue density $f_Z$. 

%We construct a multivariate Priestley-Chao type estimator of $\Psi$ which avoids estimation of the density of $Z$ and division by this estimator; and incorporates local polynomial fitting to reduce the bias term in higher dimension.

%Let ${\cal J}_K$ be the collection of all subsets of $\{1,\ldots,n\}$ with exactly $K$ elements. 
%Then we define the $K$th order Voronoi cells
%$$ C(J) \, := \, \big\{z \in S^* \, : \, \max\{\|z-Z_{j}\| : j \in J\} < \|z - Z_{k}\| \, , \, \forall k \in \{1,\ldots,n\} \backslash J \big\}\,, \qquad J \in {\cal J}_K\,.$$
%For any fixed $z \in C(J)$ we introduce the notation
%for some integer $L\geq 0$; and base our estimator on the minimizer of the term

%\bibliographystyle{imsart-nameyear}
%\bibliography{biblio}

\begin{thebibliography}{30}
	% BibTex style file: imsart-nameyear.bst, 2017-11-03
	% Default style options (sort=1,type=nameyear).
	% Used options (sort=1,type=nameyear).
	
	\bibitem[\protect\citeauthoryear{Abadie and Imbens}{2006}]{abadie2006large}
	\begin{barticle}[author]
		\bauthor{\bsnm{Abadie},~\bfnm{Alberto}\binits{A.}} \AND
		\bauthor{\bsnm{Imbens},~\bfnm{Guido~W}\binits{G.~W.}}
		(\byear{2006}).
		\btitle{Large sample properties of matching estimators for average treatment
			effects}.
		\bjournal{Econometrica}
		\bvolume{74}
		\bpages{235--267}.
	\end{barticle}
	\endbibitem
	
	\bibitem[\protect\citeauthoryear{Abadie and Imbens}{2011}]{abadie2011bias}
	\begin{barticle}[author]
		\bauthor{\bsnm{Abadie},~\bfnm{Alberto}\binits{A.}} \AND
		\bauthor{\bsnm{Imbens},~\bfnm{Guido~W}\binits{G.~W.}}
		(\byear{2011}).
		\btitle{Bias-corrected matching estimators for average treatment effects}.
		\bjournal{Journal of Business \& Economic Statistics}
		\bvolume{29}
		\bpages{1--11}.
	\end{barticle}
	\endbibitem
	
	\bibitem[\protect\citeauthoryear{Audibert and Tsybakov}{2007}]{Audibert2007}
	\begin{barticle}[author]
		\bauthor{\bsnm{Audibert},~\bfnm{Jean-Yves}\binits{J.-Y.}} \AND
		\bauthor{\bsnm{Tsybakov},~\bfnm{Alexandre~B.}\binits{A.~B.}}
		(\byear{2007}).
		\btitle{Fast learning rates for plug-in classifiers}.
		\bjournal{Ann. Stat.}
		\bvolume{35}
		\bpages{608--633}.
		\bdoi{10.1214/009053606000001217}
	\end{barticle}
	\endbibitem
	
	\bibitem[\protect\citeauthoryear{Chernozhukov et~al.}{2018}]{Chernozhukov2018}
	\begin{barticle}[author]
		\bauthor{\bsnm{Chernozhukov},~\bfnm{Victor}\binits{V.}},
		\bauthor{\bsnm{Chetverikov},~\bfnm{Denis}\binits{D.}},
		\bauthor{\bsnm{Demirer},~\bfnm{Mert}\binits{M.}},
		\bauthor{\bsnm{Duflo},~\bfnm{Esther}\binits{E.}},
		\bauthor{\bsnm{Hansen},~\bfnm{Christian}\binits{C.}},
		\bauthor{\bsnm{Newey},~\bfnm{Whitney}\binits{W.}} \AND
		\bauthor{\bsnm{Robins},~\bfnm{James}\binits{J.}}
		(\byear{2018}).
		\btitle{Double/debiased machine learning for treatment and structural
			parameters}.
		\bjournal{Econom. J.}
		\bvolume{21}
		\bpages{c1--c68}.
		\bdoi{10.1111/ectj.12097}
	\end{barticle}
	\endbibitem
	
	\bibitem[\protect\citeauthoryear{Chernozhukov
		et~al.}{2024}]{chernozhukov2024applied}
	\begin{barticle}[author]
		\bauthor{\bsnm{Chernozhukov},~\bfnm{Victor}\binits{V.}},
		\bauthor{\bsnm{Hansen},~\bfnm{Christian}\binits{C.}},
		\bauthor{\bsnm{Kallus},~\bfnm{Nathan}\binits{N.}},
		\bauthor{\bsnm{Spindler},~\bfnm{Martin}\binits{M.}} \AND
		\bauthor{\bsnm{Syrgkanis},~\bfnm{Vasilis}\binits{V.}}
		(\byear{2024}).
		\btitle{Applied causal inference powered by ML and AI}.
		\bjournal{arXiv preprint arXiv:2403.02467}.
	\end{barticle}
	\endbibitem
	
	\bibitem[\protect\citeauthoryear{Dahlke et~al.}{2022}]{dahlke2022statistically}
	\begin{barticle}[author]
		\bauthor{\bsnm{Dahlke},~\bfnm{Stephan}\binits{S.}},
		\bauthor{\bsnm{Heuer},~\bfnm{Sven}\binits{S.}},
		\bauthor{\bsnm{Holzmann},~\bfnm{Hajo}\binits{H.}} \AND
		\bauthor{\bsnm{Tafo},~\bfnm{Pavel}\binits{P.}}
		(\byear{2022}).
		\btitle{Statistically optimal estimation of signals in modulation spaces using
			Gabor frames}.
		\bjournal{IEEE Transactions on Information Theory}
		\bvolume{68}
		\bpages{4182--4200}.
	\end{barticle}
	\endbibitem
	
	\bibitem[\protect\citeauthoryear{Delaigle, Hall and Qiu}{2006}]{zbMATH05071790}
	\begin{barticle}[author]
		\bauthor{\bsnm{Delaigle},~\bfnm{Aurore}\binits{A.}},
		\bauthor{\bsnm{Hall},~\bfnm{Peter}\binits{P.}} \AND
		\bauthor{\bsnm{Qiu},~\bfnm{Peihua}\binits{P.}}
		(\byear{2006}).
		\btitle{Nonparametric methods for solving the {Berkson} errors-in-variables
			problem}.
		\bjournal{J. R. Stat. Soc., Ser. B, Stat. Methodol.}
		\bvolume{68}
		\bpages{201--220}.
		\bdoi{10.1111/j.1467-9868.2006.00540.x}
	\end{barticle}
	\endbibitem
	
	\bibitem[\protect\citeauthoryear{Devroye et~al.}{2017}]{devroye2017measure}
	\begin{barticle}[author]
		\bauthor{\bsnm{Devroye},~\bfnm{Luc}\binits{L.}},
		\bauthor{\bsnm{Gy{\"o}rfi},~\bfnm{L{\'a}szl{\'o}}\binits{L.}},
		\bauthor{\bsnm{Lugosi},~\bfnm{G{\'a}bor}\binits{G.}} \AND
		\bauthor{\bsnm{Walk},~\bfnm{Harro}\binits{H.}}
		(\byear{2017}).
		\btitle{On the measure of Voronoi cells}.
		\bjournal{Journal of Applied Probability}
		\bvolume{54}
		\bpages{394--408}.
	\end{barticle}
	\endbibitem
	
	\bibitem[\protect\citeauthoryear{Dunker et~al.}{2019}]{dunker2019}
	\begin{barticle}[author]
		\bauthor{\bsnm{Dunker},~\bfnm{Fabian}\binits{F.}},
		\bauthor{\bsnm{Eckle},~\bfnm{Konstantin}\binits{K.}},
		\bauthor{\bsnm{Proksch},~\bfnm{Katharina}\binits{K.}} \AND
		\bauthor{\bsnm{Schmidt-Hieber},~\bfnm{Johannes}\binits{J.}}
		(\byear{2019}).
		\btitle{Tests for qualitative features in the random coefficients model}.
		\bjournal{Electron. J. Stat.}
		\bvolume{13}
		\bpages{2257--2306}.
		\bdoi{10.1214/19-EJS1570}
	\end{barticle}
	\endbibitem
	
	\bibitem[\protect\citeauthoryear{Gaillac and
		Gautier}{2022}]{gaillac2022adaptive}
	\begin{barticle}[author]
		\bauthor{\bsnm{Gaillac},~\bfnm{Christophe}\binits{C.}} \AND
		\bauthor{\bsnm{Gautier},~\bfnm{Eric}\binits{E.}}
		(\byear{2022}).
		\btitle{Adaptive estimation in the linear random coefficients model when
			regressors have limited variation}.
		\bjournal{Bernoulli}
		\bvolume{28}
		\bpages{504--524}.
	\end{barticle}
	\endbibitem
	
	\bibitem[\protect\citeauthoryear{Gautier and Kitamura}{2013}]{Gautier2013}
	\begin{barticle}[author]
		\bauthor{\bsnm{Gautier},~\bfnm{Eric}\binits{E.}} \AND
		\bauthor{\bsnm{Kitamura},~\bfnm{Yuichi}\binits{Y.}}
		(\byear{2013}).
		\btitle{Nonparametric estimation in random coefficients binary choice models}.
		\bjournal{Econometrica}
		\bvolume{81}
		\bpages{581--607}.
		\bdoi{10.3982/ECTA8675}
	\end{barticle}
	\endbibitem
	
	\bibitem[\protect\citeauthoryear{Hoderlein, Holzmann and
		Meister}{2017}]{hoderlein2017triangular}
	\begin{barticle}[author]
		\bauthor{\bsnm{Hoderlein},~\bfnm{S.}\binits{S.}},
		\bauthor{\bsnm{Holzmann},~\bfnm{H.}\binits{H.}} \AND
		\bauthor{\bsnm{Meister},~\bfnm{A.}\binits{A.}}
		(\byear{2017}).
		\btitle{The triangular model with random coefficients}.
		\bjournal{Journal of Econometrics}
		\bvolume{201}
		\bpages{144--169}.
	\end{barticle}
	\endbibitem
	
	\bibitem[\protect\citeauthoryear{Hoderlein, Klemel{\"a} and
		Mammen}{2010}]{Hoderlein2010}
	\begin{barticle}[author]
		\bauthor{\bsnm{Hoderlein},~\bfnm{Stefan}\binits{S.}},
		\bauthor{\bsnm{Klemel{\"a}},~\bfnm{Jussi}\binits{J.}} \AND
		\bauthor{\bsnm{Mammen},~\bfnm{Enno}\binits{E.}}
		(\byear{2010}).
		\btitle{Analyzing the random coefficient model nonparametrically}.
		\bjournal{Econom. Theory}
		\bvolume{26}
		\bpages{804--837}.
		\bdoi{10.1017/S0266466609990119}
	\end{barticle}
	\endbibitem
	
	\bibitem[\protect\citeauthoryear{Holzmann and Meister}{2020}]{holzmann2020rate}
	\begin{barticle}[author]
		\bauthor{\bsnm{Holzmann},~\bfnm{Hajo}\binits{H.}} \AND
		\bauthor{\bsnm{Meister},~\bfnm{Alexander}\binits{A.}}
		(\byear{2020}).
		\btitle{Rate-optimal nonparametric estimation for random coefficient regression
			models}.
		\bjournal{Bernoulli}
		\bvolume{26}
		\bpages{2790--2814}.
		\bdoi{10.3150/20-BEJ1207}
	\end{barticle}
	\endbibitem
	
	\bibitem[\protect\citeauthoryear{Hu, Green and
		Tibshirani}{2022}]{hugreentibsh2022}
	\begin{barticle}[author]
		\bauthor{\bsnm{Hu},~\bfnm{Addison~J.}\binits{A.~J.}},
		\bauthor{\bsnm{Green},~\bfnm{Alden}\binits{A.}} \AND
		\bauthor{\bsnm{Tibshirani},~\bfnm{Ryan~J.}\binits{R.~J.}}
		(\byear{2022}).
		\btitle{The Voronoigram: minimax estimation of bounded variation functions from
			scattered data}.
		\bjournal{arXiv preprint arXiv: 2212.14514}.
	\end{barticle}
	\endbibitem
	
	\bibitem[\protect\citeauthoryear{Imbens}{2004}]{imbens2004nonparametric}
	\begin{barticle}[author]
		\bauthor{\bsnm{Imbens},~\bfnm{Guido~W}\binits{G.~W.}}
		(\byear{2004}).
		\btitle{Nonparametric estimation of average treatment effects under exogeneity:
			A review}.
		\bjournal{Review of Economics and Statistics}
		\bvolume{86}
		\bpages{4--29}.
	\end{barticle}
	\endbibitem
	
	\bibitem[\protect\citeauthoryear{Kallus}{2020}]{kallus2020generalized}
	\begin{barticle}[author]
		\bauthor{\bsnm{Kallus},~\bfnm{Nathan}\binits{N.}}
		(\byear{2020}).
		\btitle{Generalized optimal matching methods for causal inference}.
		\bjournal{Journal of Machine Learning Research}
		\bvolume{21}
		\bpages{1--54}.
	\end{barticle}
	\endbibitem
	
	\bibitem[\protect\citeauthoryear{Kerkyacharian and
		Picard}{2004}]{kerkyacharian2004regression}
	\begin{barticle}[author]
		\bauthor{\bsnm{Kerkyacharian},~\bfnm{G{\'e}rard}\binits{G.}} \AND
		\bauthor{\bsnm{Picard},~\bfnm{Dominique}\binits{D.}}
		(\byear{2004}).
		\btitle{Regression in random design and warped wavelets}.
		\bjournal{Bernoulli}
		\bvolume{10}
		\bpages{1053--1105}.
	\end{barticle}
	\endbibitem
	
	\bibitem[\protect\citeauthoryear{Kohler}{2014}]{Kohler2014}
	\begin{barticle}[author]
		\bauthor{\bsnm{Kohler},~\bfnm{Michael}\binits{M.}}
		(\byear{2014}).
		\btitle{Optimal global rates of convergence for noiseless regression estimation
			problems with adaptively chosen design}.
		\bjournal{Journal of Multivariate Analysis}
		\bvolume{132}
		\bpages{197--208}.
	\end{barticle}
	\endbibitem
	
	\bibitem[\protect\citeauthoryear{Kouw and Loog}{2019}]{kouw2018introduction}
	\begin{barticle}[author]
		\bauthor{\bsnm{Kouw},~\bfnm{Wouter~M}\binits{W.~M.}} \AND
		\bauthor{\bsnm{Loog},~\bfnm{Marco}\binits{M.}}
		(\byear{2019}).
		\btitle{An introduction to domain adaptation and transfer learning}.
		\bjournal{arXiv preprint arXiv:1812.11806}.
	\end{barticle}
	\endbibitem
	
	\bibitem[\protect\citeauthoryear{Lewbel and Schennach}{2007}]{lewbel2007simple}
	\begin{barticle}[author]
		\bauthor{\bsnm{Lewbel},~\bfnm{Arthur}\binits{A.}} \AND
		\bauthor{\bsnm{Schennach},~\bfnm{Susanne~M}\binits{S.~M.}}
		(\byear{2007}).
		\btitle{A simple ordered data estimator for inverse density weighted
			expectations}.
		\bjournal{Journal of Econometrics}
		\bvolume{136}
		\bpages{189--211}.
	\end{barticle}
	\endbibitem
	
	\bibitem[\protect\citeauthoryear{Lin, Ding and Han}{2023}]{Lin2023Econometrica}
	\begin{barticle}[author]
		\bauthor{\bsnm{Lin},~\bfnm{Zhexiao}\binits{Z.}},
		\bauthor{\bsnm{Ding},~\bfnm{Peng}\binits{P.}} \AND
		\bauthor{\bsnm{Han},~\bfnm{Fang}\binits{F.}}
		(\byear{2023}).
		\btitle{Estimation based on nearest neighbor matching: from density ratio to
			average treatment effect}.
		\bjournal{Econometrica}
		\bvolume{91}
		\bpages{2187--2217}.
		\bdoi{10.3982/ECTA20598}
	\end{barticle}
	\endbibitem
	
	\bibitem[\protect\citeauthoryear{Masten}{2018}]{masten2018random}
	\begin{barticle}[author]
		\bauthor{\bsnm{Masten},~\bfnm{Matthew~A}\binits{M.~A.}}
		(\byear{2018}).
		\btitle{Random coefficients on endogenous variables in simultaneous equations
			models}.
		\bjournal{The Review of Economic Studies}
		\bvolume{85}
		\bpages{1193--1250}.
	\end{barticle}
	\endbibitem
	
	\bibitem[\protect\citeauthoryear{Meister}{2009}]{Meister2009Deconvolution}
	\begin{bbook}[author]
		\bauthor{\bsnm{Meister},~\bfnm{Alexander}\binits{A.}}
		(\byear{2009}).
		\btitle{Deconvolution problems in nonparametric statistics}.
		\bpublisher{Lecture Notes in Statistics, Springer}.
	\end{bbook}
	\endbibitem
	
	\bibitem[\protect\citeauthoryear{Portier, Truquet and
		Yamane}{2023}]{portier2023scalable}
	\begin{barticle}[author]
		\bauthor{\bsnm{Portier},~\bfnm{Fran{\c{c}}ois}\binits{F.}},
		\bauthor{\bsnm{Truquet},~\bfnm{Lionel}\binits{L.}} \AND
		\bauthor{\bsnm{Yamane},~\bfnm{Ikko}\binits{I.}}
		(\byear{2023}).
		\btitle{Scalable and hyper-parameter-free non-parametric covariate shift
			adaptation with conditional sampling}.
		\bjournal{arXiv preprint arXiv:2312.09969}.
	\end{barticle}
	\endbibitem
	
	\bibitem[\protect\citeauthoryear{Samworth}{2012}]{samworthnearest}
	\begin{barticle}[author]
		\bauthor{\bsnm{Samworth},~\bfnm{Richard~J.}\binits{R.~J.}}
		(\byear{2012}).
		\btitle{Optimal weighted nearest neighbour classifiers}.
		\bjournal{Ann. Statist.}
		\bvolume{40}
		\bpages{2733--2763}.
		\bdoi{10.1214/12-AOS1049}
	\end{barticle}
	\endbibitem
	
	\bibitem[\protect\citeauthoryear{Sharpnack}{2023}]{sharpnack2023}
	\begin{barticle}[author]
		\bauthor{\bsnm{Sharpnack},~\bfnm{James}\binits{J.}}
		(\byear{2023}).
		\btitle{On {{\(L^2\)}}-consistency of nearest neighbor matching}.
		\bjournal{IEEE Trans. Inf. Theory}
		\bvolume{69}
		\bpages{3978--3988}.
		\bdoi{10.1109/TIT.2022.3226479}
	\end{barticle}
	\endbibitem
	
	\bibitem[\protect\citeauthoryear{Sugiyama et~al.}{2007}]{sugiyama2007direct}
	\begin{barticle}[author]
		\bauthor{\bsnm{Sugiyama},~\bfnm{Masashi}\binits{M.}},
		\bauthor{\bsnm{Nakajima},~\bfnm{Shinichi}\binits{S.}},
		\bauthor{\bsnm{Kashima},~\bfnm{Hisashi}\binits{H.}},
		\bauthor{\bsnm{Buenau},~\bfnm{Paul}\binits{P.}} \AND
		\bauthor{\bsnm{Kawanabe},~\bfnm{Motoaki}\binits{M.}}
		(\byear{2007}).
		\btitle{Direct importance estimation with model selection and its application
			to covariate shift adaptation}.
		\bjournal{Advances in neural information processing systems}
		\bvolume{20}.
	\end{barticle}
	\endbibitem
	
	\bibitem[\protect\citeauthoryear{Tsybakov}{2009}]{Tsybakov2009nonparametric}
	\begin{bbook}[author]
		\bauthor{\bsnm{Tsybakov},~\bfnm{A.~B.}\binits{A.~B.}}
		(\byear{2009}).
		\btitle{Introduction to nonparametric estimation}.
		\bpublisher{Springer}.
	\end{bbook}
	\endbibitem
	
	\bibitem[\protect\citeauthoryear{Wang and Shah}{2024}]{wang2020debiased}
	\begin{barticle}[author]
		\bauthor{\bsnm{Wang},~\bfnm{Yuhao}\binits{Y.}} \AND
		\bauthor{\bsnm{Shah},~\bfnm{Rajen~D}\binits{R.~D.}}
		(\byear{2024}).
		\btitle{Debiased inverse propensity score weighting for estimation of average
			treatment effects with high-dimensional confounders}.
		\bjournal{Ann. Statist., to appear; arXiv preprint arXiv:2011.08661}.
	\end{barticle}
	\endbibitem
	
\end{thebibliography}

\end{document}